
\documentclass[10pt,reqno]{amsart}

\usepackage[utf8x]{inputenc}
\usepackage[english]{babel}
\usepackage{amsmath,amsfonts,amssymb,amsthm,shuffle}
\usepackage[T1]{fontenc}
\usepackage{lmodern}

\usepackage[top=3.5cm,bottom=3.5cm,left=3.6cm,right=3.6cm]{geometry}

\usepackage[dvipsnames]{xcolor}
\usepackage[hyperindex=true,frenchlinks=true,colorlinks=true,
citecolor=Mahogany,linkcolor=DarkOrchid,urlcolor=Tan,linktocpage,
pagebackref = true]{hyperref}

\usepackage{tikz}
\usetikzlibrary{shapes}
\usetikzlibrary{fit}
\usetikzlibrary{decorations.pathmorphing}

\usepackage{dsfont}
\usepackage{wasysym}
\usepackage{cite}
\usepackage{subfigure}
\usepackage{multirow}
\usepackage{enumitem}
\usepackage{multicol}

\linespread{1.15}

\title[Pluriassociative algebras I]{Pluriassociative algebras I: \\
The pluriassociative operad}
\keywords{Tree; Rewrite rule; Operad; Koszul operad; Diassociative operad;
Triassociative operad; Poincaré-Birkhoff-Witt basis}
\subjclass[2010]{05E99, 05C05, 18D50.}
\date{\today}
\author{Samuele Giraudo}
\address{Laboratoire d'Informatique Gaspard-Monge, Université Paris-Est
    Marne-la-Vallée, 5 boulevard Descartes, Champs-sur-Marne,
    77454 Marne-la-Vallée cedex 2, France}
\email{samuele.giraudo@u-pem.fr}

\numberwithin{equation}{subsection}
\setcounter{tocdepth}{2}
\renewcommand{\leq}{\leqslant}
\renewcommand{\geq}{\geqslant}

\newtheorem{Theoreme}{Theorem}[subsection]
\newtheorem{Proposition}[Theoreme]{Proposition}
\newtheorem{Lemme}[Theoreme]{Lemma}

\newcommand{\Aca}{\mathcal{A}}

\newcommand{\Fca}{\mathcal{F}}

\newcommand{\Pca}{\mathcal{P}}
\newcommand{\Hca}{\mathcal{H}}
\newcommand{\Oca}{\mathcal{O}}
\newcommand{\Mca}{\mathcal{M}}

\newcommand{\Efr}{\mathfrak{e}}
\newcommand{\Rfr}{\mathfrak{r}}
\newcommand{\Sfr}{\mathfrak{s}}
\newcommand{\Tfr}{\mathfrak{t}}

\newcommand{\Ksf}{{\mathsf{K}}}
\newcommand{\Tbb}{\mathbb{T}}
\newcommand{\Pbb}{\mathbb{P}}
\newcommand{\La}{\mathtt{a}}
\newcommand{\Lb}{\mathtt{b}}
\newcommand{\Lc}{\mathtt{c}}

\newcommand{\K}{\mathbb{K}}
\newcommand{\EnsNat}{\mathbb{N}}
\newcommand{\Gen}{\mathbb{G}}
\newcommand{\GenLibre}{\mathfrak{G}}
\newcommand{\RelLibre}{\mathfrak{R}}
\newcommand{\GenDias}{\GenLibre_{\Dias_\gamma}}

\newcommand{\GenTrias}{\GenLibre_{\Trias_\gamma}}
\newcommand{\RelDias}{\RelLibre_{\Dias_\gamma}}

\newcommand{\RelTrias}{\RelLibre_{\Trias_\gamma}}

\newcommand{\Alg}{\Aca}
\newcommand{\AlgLibre}{\Fca}
\newcommand{\Dias}{\mathsf{Dias}}
\newcommand{\Dendr}{\mathsf{Dendr}}
\newcommand{\As}{\mathsf{As}}

\newcommand{\Trias}{\mathsf{Trias}}

\newcommand{\AlgPos}{\mathsf{Pos}}
\newcommand{\AlgEns}{\mathsf{Sets}}
\newcommand{\AlgMots}{\mathsf{Words}}
\newcommand{\AlgMotsCom}{\mathsf{CWords}}
\newcommand{\AlgMotsMarq}{\mathsf{MWords}}

\newcommand{\Unite}{\mathds{1}}
\newcommand{\OpLibre}{\mathbf{Free}}
\newcommand{\Eval}{\mathrm{eval}}
\newcommand{\T}{\mathsf{T}}
\newcommand{\Vect}{\mathrm{Vect}}
\newcommand{\GDias}{\dashv}
\newcommand{\DDias}{\vdash}
\newcommand{\GDiasA}{\rotatebox[origin=c]{180}{$\Vdash$}}
\newcommand{\DDiasA}{\Vdash}

\newcommand{\MAs}{\star}

\newcommand{\MTrias}{\perp}

\newcommand{\Max}{\uparrow}
\newcommand{\Augm}{\mathrm{h}}
\newcommand{\Rel}{\leftrightarrow}
\newcommand{\Congr}{\equiv}
\newcommand{\Recr}{\to}
\newcommand{\Racine}{\mathrm{root}}
\newcommand{\Mot}{\mathrm{word}}
\newcommand{\MotT}{\mathrm{wordt}}
\newcommand{\Equerre}{\mathrm{hook}}
\newcommand{\EquerreT}{\mathrm{hookt}}
\newcommand{\OrdDias}{\preccurlyeq}
\newcommand{\Halo}{\mathrm{Halo}}

\newcommand{\MProjVersPluri}{\mathrm{M}}
\newcommand{\Incr}{\mathrm{Incr}}
\newcommand{\Hamming}{\mathrm{ham}}
\newcommand{\Corolle}{\mathrm{cor}}
\newcommand{\Action}{\cdot}


\newcommand{\Cacher}[1]{}
\newcommand{\Sloane}[1]{\href{http://oeis.org/#1}{{\bf #1}}}



\hyphenation{pluri-asso-cia-tive}
\hyphenation{pluri-diff-eren-tial}

\definecolor{Noir}{RGB}{0,0,0}
\definecolor{Blanc}{RGB}{255,255,255}
\definecolor{Rouge}{RGB}{205,35,38}
\definecolor{Bleu}{RGB}{2,60,195}
\definecolor{Vert}{RGB}{23,163,1}
\definecolor{Violet}{RGB}{181,18,225}
\definecolor{Orange}{RGB}{255,113,15}
\definecolor{Marron}{RGB}{52,46,0}

\tikzstyle{Noeud}=[circle,draw=Bleu!80,fill=Bleu!8,inner sep=1pt,
minimum size=2.5mm,line width=1.25pt,font=\scriptsize]
\tikzstyle{Arete}=[Rouge!80,cap=round,line width=1.25pt]
\tikzstyle{Feuille}=[rectangle,draw=Noir!70,fill=Noir!16,
inner sep=0pt,minimum size=1mm,line width=1.25pt]
\tikzstyle{Clair}=[fill=Blanc]
\tikzstyle{Marque1}=[draw=Vert!100,fill=Vert!30]
\tikzstyle{Marque2}=[draw=Orange!100,fill=Orange!40]
\tikzstyle{Marque3}=[draw=Rouge!100,fill=Rouge!50]
\tikzstyle{EtiqArete}=[regular polygon,regular polygon sides=6,
draw=Vert!80,fill=Vert!2,inner sep=.2pt,line width=.75pt,
minimum size=2.8mm,midway,text=Noir,font=\tiny]
\tikzstyle{NoeudSchr}=[Noeud,draw=Vert!80,fill=Vert!8]
\tikzstyle{NoeudCor}=[Noeud,draw=Marron!80,fill=Marron!8]

\begin{document}

\maketitle 

\begin{abstract}
    Diassociative algebras form a categoy of algebras recently
    introduced by Loday. A diassociative algebra is a vector space
    endowed with two associative binary operations satisfying some very
    natural relations. Any diassociative algebra is an algebra over the
    diassociative operad, and, among its most notable properties, this
    operad is the Koszul dual of the dendriform operad. We introduce here,
    by adopting the point of view and the tools offered by the theory of
    operads, a generalization on a nonnegative integer parameter $\gamma$
    of diassociative algebras, called $\gamma$-pluriassociative algebras,
    so that $1$-pluriassociative algebras are diassociative algebras.
    Pluriassociative algebras are vector spaces endowed with $2\gamma$
    associative binary operations satisfying some relations. We provide
    a complete study of the $\gamma$-pluriassociative operads, the
    underlying operads of the category of $\gamma$-pluriassociative
    algebras. We exhibit a realization of these operads, establish
    several presentations by generators and relations, compute their
    Hilbert series, show that they are Koszul, and construct the free
    objects in the corresponding categories. We also study several
    notions of units in $\gamma$-pluriassociative algebras and propose
    a general way to construct such algebras. This paper ends with the
    introduction of an analogous generalization of the triassociative
    operad of Loday and Ronco.
\end{abstract}

\begin{footnotesize}
    \tableofcontents
\end{footnotesize}

\section*{Introduction}
In the recent years, several algebraic structures on vector spaces
based on various sets of combinatorial objects and endowed with more or
less complicated operations on these have been considered by algebraic
combinatorists. As most famous examples, we can cite free pre-Lie
algebras~\cite{CL01}, which are vector spaces of rooted trees endowed
with a grafting product, and free dendriform algebras~\cite{Lod01}, which
are vector spaces of binary trees endowed with two products operating by
shuffling binary trees. Other well-known examples include free Zinbiel
algebras~\cite{Lod95,Lod01} endowing the space of all permutations with
a shuffle product, nonassociative permutative algebras~\cite{MY91,Liv06}
endowing the space of all rooted trees with a grafting product at the
root, and duplicial algebras~\cite{Lod08} endowing the space of all
binary with two grafting operations.
\medskip

Instead of studying all these algebraic structures separately, it is
possible to ask and treat some general questions about these under a
uniform point of view. The theory of operads is an efficient tool
to regard different categories of algebraic structures in a unified
manner. This theory (see~\cite{LV12} for a complete exposition and
also~\cite{Cha08} for an exposition highlighting the combinatorial
aspects of the theory) has been introduced in the context of algebraic
topology~\cite{May72,BV73}. Roughly speaking, an operad is a space
of abstract operators consisting in several inputs and one output
that can be composed to form bigger ones. The point is that any operad
encodes a category of algebras and working with an operad amounts to work
with the algebras all together of this category. Moreover, the use of the
theory of operads leads to the discovery of connections between differents
sorts of algebras by terms of morphisms of operads. As a simple example,
the well-known fact that any associative algebra gives rise to a Lie
algebra by considering its associator as a Lie bracket comes from the
fact that there is a morphism from the underlying operad of the category
of Lie algebras to the underlying operad of the category of associative
algebras.
\medskip

The present work is concerned with the definition of a coherent
generalization of dialgebras, algebraic structures introduced by Loday
in~\cite{Lod01}. A dialgebra is a vector space endowed with two
associative binary operations $\GDias$ and $\DDias$ satisfying some
relations. From a combinatorial point of view, the bases of the free
dialgebra over one generator are indexed by ordered pairs $(n, k)$ of
integers, denoted by $\Efr_{n, k}$, and satisfying $1 \leq k \leq n$.
The operations $\GDias$ and $\DDias$ admit simple set-theoretic
descriptions over this basis~\cite{Cha05}. In a previous
work~\cite{Gir12,Gir15}, we introduced a new construction for the operad
$\Dias$, the underlying operad of the category of diassociative algebras,
and we raised the question whether this construction can be extended to
obtain operads generalizing $\Dias$ and hence, to obtain generalizations
of dialgebras.
\medskip

Let us give some explanations about our construction of $\Dias$.
In~\cite{Gir12,Gir15}, we defined a general functorial construction $\T$
producing an operad from any monoid. This construction $\T$ sends a
monoid $M$ to the operad $\T M$ of all words on $M$, where $M$ is seen
as an alphabet. The arity of a word is its length and the operadic partial
composition $u \circ_i v$ of two words $u$ and $v$ of $\T M$ consists in
replacing the $i$th letter $u_i$ of $u$ by a version of $v$ obtained by
multpliying to the left all its letters by $u_i$. The operad $\Dias$ is
the suboperad of $\T M$, where $M$ is the multiplicative monoid on
$\{0, 1\}$, generated by the two words $01$ and $10$ of arity two. In
the present paper, we rely on $\T$ to construct a generalization on a
nonnegative integer parameter $\gamma$ of $\Dias$, denoted by
$\Dias_\gamma$, in such a way that $\Dias_1 = \Dias$ and $\Dias_\gamma$ is
a suboperad of $\Dias_{\gamma + 1}$ for any $\gamma \geq 0$. The operads
$\Dias_\gamma$, called $\gamma$-pluriassociative operads, are set-operads
involving words on the alphabet $\{0, 1, \dots, \gamma\}$ with exactly
one occurrence of~$0$. Besides, this work naturally leads to the
consideration and the definition of several new operads.
Table~\ref{tab:operades_introduites_I} summarizes some information about
these. We provide for instance a generalization on a nonnegative integer
parameter $\gamma$ of the triassociative operad $\Trias$~\cite{LR04},
denoted by $\Trias_\gamma$.
\begin{table}
    \centering
    \begin{tabular}{c|c|c}
        Operad & Objects & Dimensions \\ \hline \hline
        $\Dias_\gamma$ & Words on $\{0, 1, \dots, \gamma\}$
        with exactly one $0$
            & $n \gamma^{n - 1}$ \\ \hline
        $\As_\gamma$ & $\gamma$-corollas & $\gamma$ \\ \hline
        $\Trias_\gamma$ & Words on $\{0, 1, \dots, \gamma\}$
        with at least one $0$
            & $(\gamma + 1)^n - \gamma^n$
    \end{tabular}
    \bigskip
    \caption{The main operads defined in this paper. All these operads
    depend on a nonnegative integer parameter $\gamma$. The shown
    dimensions are the ones of the homogeneous components of
    arities $n \geq 2$ of the operads.}
    \label{tab:operades_introduites_I}
\end{table}
\medskip

The main rationale for this work is to establish the necessary
foundations to propose a generalization on a nonnegative integer
parameter $\gamma$ of dendriform algebras~\cite{Lod01}. Since $\Dias$ is
the Koszul dual~\cite{GK94} of the operad $\Dendr$, the underlying
operad of the category of dendriform algebras, our objective is to
propose the definition of the operads $\Dendr_\gamma$, defined each as
the Koszul dual of $\Dias_\gamma$. Moreover, since $\Dias$ admits a
description far simpler than $\Dendr$, starting by constructing a
generalization of $\Dias$ to obtain a generalization of $\Dendr$ by
Koszul duality is a convenient path to explore. This strategy is
developed in the continutation of this work~\cite{GirII}, where the
operads $\Dendr_\gamma$ are studied. This lead to new sorts of algebras,
providing analogs of dendriform algebras and different from already
existing ones (see for instance~\cite{LR04,AL04,Ler04,Ler07,Nov14}).
\medskip

This paper is organized as follows. Section~\ref{sec:outils} contains a
conspectus of the tools used in this paper. We recall here the
definition of the construction $\T$~\cite{Gir12,Gir15} and provide a
reformulation of results of Hoffbeck~\cite{Hof10} and Dotsenko and
Khoroshkin~\cite{DK10} to prove that an operad is Koszul by using
convergent rewrite rules. Besides, this part provides self-contained
definitions about nonsymmetric operads, algebras over operads, free
operads, and rewrite rules on trees. This section ends by some recalls
about the diassociative operad and diassociative algebras.
\medskip

Section~\ref{sec:dias_gamma} is devoted to the introduction and the
study of the operad $\Dias_\gamma$. We begin by detailing the
construction of $\Dias_\gamma$ as a suboperad of the operad obtained by
the construction $\T$ applied on the monoid $\Mca_\gamma$ with $\{0, 1, \dots, \gamma\}$
as underlying set and with the operation $\max$ as product. More
precisely, $\Dias_\gamma$ is defined as the suboperad of $\T \Mca_\gamma$
generated by the words $0a$ and $a0$ for all $a \in \{1, \dots, \gamma\}$.
We then provide a presentation by generators and relations of $\Dias_\gamma$
(Theorem~\ref{thm:presentation_dias_gamma}), and show that it is a
Koszul operad (Theorem~\ref{thm:koszulite_dias_gamma}). We also establish
some more properties of this operad: we compute its group of symmetries
(Proposition~\ref{prop:symetries_dias_gamma}), show that it is a basic
operad in the sense of~\cite{Val07}
(Proposition~\ref{prop:dias_gamma_basique}), and show that it is a
rooted operad in the sense of~\cite{Cha14}
(Proposition~\ref{prop:dias_gamma_basique}). We end this section by
introducing an alternating basis of $\Dias_\gamma$, the $\Ksf$-basis,
defined through a partial ordering relation over the words indexing the
bases of $\Dias_\gamma$. After describing how the partial composition of
$\Dias_\gamma$ expresses over the $\Ksf$-basis
(Theorem~\ref{thm:composition_base_K}), we provide a presentation of
$\Dias_\gamma$ over this basis
(Proposition~\ref{prop:presentation_alternative_dias_gamma}). Despite
the fact that this alternative presentation is more complex than the
original one of $\Dias_\gamma$ provided by
Theorem~\ref{thm:presentation_dias_gamma}, the computation of the Koszul
dual $\Dendr_\gamma$ of $\Dias_\gamma$ from this second presentation
leads to a surprisingly plain presentation of $\Dendr_\gamma$
considered later in~\cite{GirII}.
\medskip

In Section~\ref{sec:algebres_dias_gamma}, algebras over $\Dias_\gamma$,
called $\gamma$-pluriassociative algebras, are studied. The free
$\gamma$-pluriassociative algebra over one generator is described as a
vector space of words on the alphabet $\{0, 1, \dots, \gamma\}$ with
exactly one occurrence of $0$, endowed with $2\gamma$ binary operations
(Proposition~\ref{prop:algebre_dias_gamma_libre}). We next study two
different notions of units in $\gamma$-pluriassociative algebras, the
bar-units and the wire-units, that are generalizations of definitions of
Loday introduced into the context of diassociative algebras~\cite{Lod01}.
We show that the presence of a wire-unit in a $\gamma$-pluriassociative
algebra leads to many consequences on its structure
(Proposition~\ref{prop:unite_fil}). Besides, we describe a general
construction $\MProjVersPluri$ to obtain $\gamma$-pluriassociative
algebras by starting from $\gamma$-multiprojection algebras, that are
algebraic structures with $\gamma$ associative products and endowed with
$\gamma$ endomorphisms with extra relations
(Theorem~\ref{thm:algebre_multiprojections_vers_dias_gamma}). The main
interest of the construction $\MProjVersPluri$ is that
$\gamma$-multiprojection algebras are simpler algebraic structures than
$\gamma$-pluriassociative algebras. The bar-units and wire-units of the
$\gamma$-pluriassociative algebras obtained by this construction are then
studied
(Proposition~\ref{prop:algebre_multiprojections_vers_dias_gamma_proprietes}).
We end this section by listing five examples of $\gamma$-pluriassociative
algebras constructed from $\gamma$-multiprojection algebras, including
the free $\gamma$-pluriassociative algebra over one generator considered
in Section~\ref{subsubsec:algebre_dias_gamma_libre}.
\medskip

Finally, by using almost the same tools as the one used in
Section~\ref{sec:dias_gamma}, we propose in Section~\ref{sec:pluritriass}
a generalization on a nonnegative integer parameter $\gamma$ of the
triassociative operad $\Trias$ of Loday and Ronco~\cite{LR04}, denoted
by $\Trias_\gamma$. This follows a very simple idea: like $\Dias_\gamma$,
$\Trias_\gamma$ is defined as a suboperad of $\T \Mca_\gamma$ generated
by the same generators as those of $\Dias_\gamma$, plus the word $00$.
In a previous work~\cite{Gir12,Gir15}, we showed that $\Trias_1$ is the
triassociative operad. We provide here an expression for the Hilbert
series of $\Trias_\gamma$ obtained from the description of its elements
(Proposition~\ref{prop:elements_trias_gamma}) and a presentation
(Theorem~\ref{thm:presentation_trias_gamma}).
\bigskip

{\it Acknowledgements.} The author would like to thank Jean-Yves-Thibon
for its pertinent remarks and questions about this work when it was in
progress. Thanks are addressed to Frederic Chapoton and Eric Hoffbeck
for answering some questions of the author respectively about the
dendriform and diassociative operads, and Koszulity of operads. The
author thanks also Vladimir Dotsenko and Bruno Vallette for pertinent
bibliographic suggestions. Finally, the author warmly thanks the referee
for his very careful reading and his suggestions, improving the quality
of the paper.
\bigskip

{\it Notations and general conventions.}
All the algebraic structures of this article have a field of characteristic
zero $\K$ as ground field. If $S$ is a set, $\Vect(S)$ denotes the linear
span of the elements of $S$. For any integers $a$ and $c$, $[a, c]$ denotes
the set $\{b \in \EnsNat : a \leq b \leq c\}$ and $[n]$, the set $[1, n]$.
The cardinality of a finite set $S$ is denoted by $\# S$. If $u$ is a
word, its letters are indexed from left to right from $1$ to its length
$|u|$. For any $i \in [|u|]$, $u_i$ is the letter of $u$ at position $i$.
If $a$ is a letter and $n$ is a nonnegative integer, $a^n$ denotes the
word consisting in $n$ occurrences of $a$. Notice that $a^0$ is the
empty word $\epsilon$.
\medskip

\section{Preliminaries: algebraic structures and main tools}%
\label{sec:outils}
This preliminary section sets our conventions and notations about operads
and algebras over an operad, and describes the main tools we will use.
The definitions and some properties of the diassociative operad are also
recalled. This section does not contains new results but it is a
self-contained set of definitions about operads intended to readers
familiar with algebra or combinatorics but not necessarily with operadic
theory.
\medskip

\subsection{Operads and algebras over an operad}
We list here several staple definitions about operads and algebras over
an operad. We present also an important tool for this work: the
construction $\T$ producing operads from monoids.
\medskip

\subsubsection{Operads}
A {\em nonsymmetric operad in the category of vector spaces}, or a
{\em nonsymmetric operad} for short, is a graded vector space
$\Oca := \bigoplus_{n \geq 1} \Oca(n)$ together with linear maps
\begin{equation}
    \circ_i : \Oca(n) \otimes \Oca(m) \to \Oca(n + m - 1),
    \qquad n, m \geq 1, i \in [n],
\end{equation}
called {\em partial compositions}, and a distinguished element
$\Unite \in \Oca(1)$, the {\em unit} of $\Oca$. This data has to satisfy
the three relations
\begin{subequations}
\begin{equation}
    (x \circ_i y) \circ_{i + j - 1} z = x \circ_i (y \circ_j z),
    \qquad x \in \Oca(n), y \in \Oca(m),
    z \in \Oca(k), i \in [n], j \in [m],
\end{equation}
\begin{equation}
    (x \circ_i y) \circ_{j + m - 1} z = (x \circ_j z) \circ_i y,
    \qquad x \in \Oca(n), y \in \Oca(m),
    z \in \Oca(k), i < j \in [n],
\end{equation}
\begin{equation}
    \Unite \circ_1 x = x = x \circ_i \Unite,
    \qquad x \in \Oca(n), i \in [n].
\end{equation}
\end{subequations}
Since we shall consider in this paper mainly nonsymmetric operads, we
shall call these simply {\em operads}. Moreover, all considered operads
are such that $\Oca(1)$ has dimension~$1$.
\medskip

If $x$ is an element of $\Oca$ such that $x \in \Oca(n)$ for a $n \geq 1$,
we say that $n$ is the {\em arity} of $x$ and we denote it by $|x|$. An
element $x$ of $\Oca$ of arity $2$ is {\em associative} if
$x \circ_1 x = x \circ_2 x$. If $\Oca_1$ and $\Oca_2$ are operads, a
linear map $\phi : \Oca_1 \to \Oca_2$ is an {\em operad morphism} if it
respects arities, sends the unit of $\Oca_1$ to the unit of $\Oca_2$,
and commutes with partial composition maps. We say that $\Oca_2$ is a
{\em suboperad} of $\Oca_1$ if $\Oca_2$ is a graded subspace of $\Oca_1$,
and $\Oca_1$ and $\Oca_2$ have the same unit and the same partial
compositions. For any set $G \subseteq \Oca$, the {\em operad generated
by} $G$ is the smallest suboperad of $\Oca$ containing $G$. When the
operad generated by $G$ is $\Oca$ itself and $G$ is minimal with respect
to inclusion among the subsets of $\Oca$ satisfying this property, $G$
is a {\em generating set} of $\Oca$ and its elements are {\em generators}
of $\Oca$. An {\em operad ideal} of $\Oca$ is a graded subspace $I$ of
$\Oca$ such that, for any $x \in \Oca$ and $y \in I$, $x \circ_i y$ and
$y \circ_j x$ are in $I$ for all valid integers $i$ and $j$. Given an
operad ideal $I$ of $\Oca$, one can define the {\em quotient operad}
$\Oca/_I$ of $\Oca$ by $I$ in the usual way. When $\Oca$ is such that
all $\Oca(n)$ are finite for all $n \geq 1$, the {\em Hilbert series}
of $\Oca$ is the series $\Hca_\Oca(t)$ defined by
\begin{equation}
    \Hca_\Oca(t) := \sum_{n \geq 1} \dim \Oca(n) t^n.
\end{equation}
\medskip

Instead of working with the partial composition maps of $\Oca$, it is
something useful to work with the maps
\begin{equation}
    \circ : \Oca(n) \otimes \Oca(m_1) \otimes \dots \otimes \Oca(m_n)
    \to \Oca(m_1 + \dots + m_n),
    \qquad n, m_1, \dots, m_n \geq 1,
\end{equation}
linearly defined for any $x \in \Oca$ of arity $n$ and
$y_1, \dots, y_{n - 1}, y_n \in \Oca$ by
\begin{equation}
    x \circ (y_1, \dots, y_{n - 1}, y_n) :=
    (\dots ((x \circ_n y_n) \circ_{n - 1} y_{n - 1}) \dots) \circ_1 y_1.
\end{equation}
These maps are called {\em composition maps} of $\Oca$.
\medskip

\subsubsection{Set-operads}
Instead of being a direct sum of vector spaces $\Oca(n)$, $n \geq 1$,
$\Oca$ can be a graded disjoint union of sets. In this context, $\Oca$
is a {\em set-operad}. All previous definitions remain valid by replacing
direct sums $\oplus$ by disjoint unions $\sqcup$, tensor products
$\otimes$ by Cartesian products $\times$, and vector space dimensions
$\dim$ by set cardinalities $\#$. Moreover, in the context of set-operads,
we work with {\em operad congruences} instead of operad ideals. An operad
congruence on a set-operad $\Oca$ is an equivalence relation $\Congr$ on
$\Oca$ such that all elements of a same $\Congr$-equivalence class have
the same arity and for all elements $x$, $x'$, $y$, and $y'$ of $\Oca$,
$x \Congr x'$ and $y \Congr y'$ imply $x \circ_i y \Congr x' \circ_i y'$
for all valid integers $i$. The {\em quotient operad} $\Oca/_\Congr$ of
$\Oca$ by $\Congr$ is the set-operad defined in the usual way.
\medskip

Any set-operad $\Oca$ gives naturally rise to an operad on $\Vect(\Oca)$
by extending the partial compositions of $\Oca$ by linearity. Besides
this, any equivalence relation $\Rel$ of $\Oca$ such that all elements
of a same $\Rel$-equivalence class have the same arity induces a subspace
of $\Vect(\Oca)$ generated by all $x - x'$ such that $x \Rel x'$, called
{\em space induced} by $\Rel$. In particular, any operad congruence
$\Congr$ on $\Oca$ induces an operad ideal of $\Vect(\Oca)$.
\medskip

\subsubsection{From monoids to operads}%
\label{subsec:monoides_vers_operades}
In a previous work \cite{Gir12,Gir15}, the author introduced a
construction which, from any monoid, produces an operad. This
construction is described as follows. Let $\Mca$ be a monoid with an
associative product $\bullet$ admitting a unit $1$. We denote by
$\T \Mca$ the operad $\T \Mca := \bigoplus_{n \geq 1} \T \Mca(n)$ where
for all $n \geq 1$,
\begin{equation}
    \T \Mca(n) := \Vect\left(\left\{u_1 \dots u_n : u_i \in \Mca
    \mbox{ for all } i \in [n] \right\}\right).
\end{equation}
The partial composition of two words $u \in \T \Mca(n)$ and
$v \in \T \Mca(m)$ is linearly defined by
\begin{equation}
    u \circ_i v := u_1 \dots u_{i - 1}
    \, (u_i \bullet v_1) \, \dots \, (u_i \bullet v_m) \,
    u_{i + 1} \dots u_n,
    \qquad i \in [n].
\end{equation}
The unit of $\T \Mca$ is $\Unite := 1$. In other words, $\T \Mca$ is
the vector space of words on $\Mca$ seen as an alphabet and the partial
composition returns to insert a word $v$ onto the $i$th letter $u_i$ of
a word $u$ together with a left multiplication by $u_i$.
\medskip

\subsubsection{Algebras over an operad}
Any operad $\Oca$ encodes a category of algebras whose objects are
called {\em $\Oca$-algebras}. An $\Oca$-algebra $\Alg_\Oca$ is a vector
space endowed with a right action
\begin{equation}
    \Action : \Alg_\Oca^{\otimes n} \otimes \Oca(n) \to \Alg_\Oca,
    \qquad n \geq 1,
\end{equation}
satisfying the relations imposed by the structure of $\Oca$, that are
\begin{multline} \label{equ:relation_algebre_sur_operade}
    (e_1 \otimes \dots \otimes e_{n + m - 1}) \Action (x \circ_i y)
    = \\
    (e_1 \otimes \dots
        \otimes e_{i - 1} \otimes
        (e_i \otimes \dots \otimes e_{i + m - 1}) \Action y
        \otimes e_{i + m} \otimes
        \dots \otimes e_{n + m - 1}) \Action x,
\end{multline}
for all
$e_1 \otimes \dots \otimes e_{n + m - 1} \in \Alg_\Oca^{\otimes {n + m - 1}}$,
$x \in \Oca(n)$, $y \in \Oca(m)$, and $i \in [n]$. Notice that,
by~\eqref{equ:relation_algebre_sur_operade}, if $G$ is a generating set
of $\Oca$, it is enough to define the action of each $x \in G$ on
$\Alg_\Oca^{\otimes |x|}$ to wholly define~$\Action$.
\medskip

In other words, any element $x$ of $\Oca$ of arity $n$ plays the role of
a linear operation
\begin{equation}
    x : \Alg_\Oca^{\otimes n}  \to \Alg_\Oca,
\end{equation}
taking $n$ elements of $\Alg_\Oca$ as inputs and computing an element of
$\Alg_\Oca$. By a slight but convenient abuse of notation, for any
$x \in \Oca(n)$, we shall denote by $x(e_1, \dots, e_n)$, or by
$e_1 \, x \, e_2$ if $x$ has arity $2$, the element
$(e_1 \otimes \dots \otimes e_n) \Action x$ of $\Alg_\Oca$, for any
$e_1 \otimes \dots \otimes e_n \in \Alg_\Oca^{\otimes n}$.
Observe that by~\eqref{equ:relation_algebre_sur_operade}, any associative
element of $\Oca$ gives rise to an associative operation on $\Alg_\Oca$.
\medskip

Arrows in the category of $\Oca$-algebras are
{\em $\Oca$-algebra morphisms}, that are linear maps
$\phi : \Alg_1 \to \Alg_2$ between two $\Oca$-algebras $\Alg_1$ and
$\Alg_2$ such that
\begin{equation}
    \phi(x(e_1, \dots, e_n)) = x(\phi(e_1), \dots, \phi(e_n)),
\end{equation}
for all $e_1,  \dots, e_n \in \Alg_1$ and $x \in \Oca(n)$. We say that
$\Alg_2$ is an {\em $\Oca$-subalgebra} of $\Alg_1$ if $\Alg_2$ is a
subspace of $\Alg_1$ and $\Alg_1$ and $\Alg_2$ are endowed with the same
right action of $\Oca$. If $G$ is a set of elements of an $\Oca$-algebra
$\Alg$, the {\em $\Oca$-algebra generated by} $G$ is the smallest
$\Oca$-subalgebra of $\Alg$ containing $G$. When the $\Oca$-algebra
generated by $G$ is $\Alg$ itself and $G$ is minimal with respect to
inclusion among the subsets of $\Alg$ satisfying this property, $G$ is
a {\em generating set} of $\Alg$ and its elements are {\em generators}
of $\Alg$. An {\em $\Oca$-algebra ideal} of $\Alg$ is a subspace $I$ of
$\Alg$ such that for all operation $x$ of $\Oca$ of arity $n$ and elements
$e_1$, \dots, $e_n$ of $\Oca$, $x(e_1, \dots, e_n)$ is in $I$ whenever
there is a $i \in [n]$ such that $e_i$ is in $I$.
\medskip

The {\em free $\Oca$-algebra over one generator} is the $\Oca$-algebra
$\AlgLibre_\Oca$ defined in the following way. We set
$\AlgLibre_\Oca := \oplus_{n \geq 1} \AlgLibre_\Oca(n)
:= \oplus_{n \geq 1} \Oca(n)$,
and for any
$e_1, \dots, e_n \in \AlgLibre_\Oca$ and $x \in \Oca(n)$, the right
action of $x$ on $e_1 \otimes \dots \otimes e_n$ is defined by
\begin{equation}
    x(e_1, \dots, e_n) := x \circ (e_1, \dots, e_n).
\end{equation}
Then, any element $x$ of $\Oca(n)$ endows $\AlgLibre_\Oca$ with an
operation
\begin{equation}
    x : \AlgLibre_\Oca(m_1) \otimes \dots \otimes \AlgLibre_\Oca(m_n)
    \to \AlgLibre_\Oca(m_1 + \dots + m_n)
\end{equation}
respecting the graduation of $\AlgLibre_\Oca$.
\medskip

\subsection{Free operads, rewrite rules, and Koszulity}
We recall here a description of free operads through syntax trees and
presentations of operads by generators and relations. The Koszul
property for operads is a very important notion in this paper and its
sequel~\cite{GirII}. We recall it and describe an already known
criterion to prove that a set-operad is Koszul by passing by rewrite
rules on syntax trees.
\medskip

\subsubsection{Syntax trees}
Unless otherwise specified, we use in the sequel the standard
terminology ({\em i.e.}, {\em node}, {\em edge}, {\em root},
{\em parent}, {\em child}, {\em path}, {\em ancestor}, {\em etc.}) about
planar rooted trees~\cite{Knu97}. Let $\Tfr$ be a planar rooted tree.
The {\em arity} of a node of $\Tfr$ is its number of children. An
{\em internal node} (resp. a {\em leaf}) of $\Tfr$ is a node with a
nonzero (resp. null) arity. Given an internal node $x$ of $\Tfr$, due to
the planarity of $\Tfr$, the children of $x$ are totally ordered from
left to right and are thus indexed from $1$ to the arity of $x$. If $y$
is a child of $x$, $y$ defines a {\em subtree} of $\Tfr$, that is the
planar rooted tree with root $y$ and consisting in the nodes of $\Tfr$
that have $y$ as ancestor. We shall call {\em $i$th subtree} of $x$ the
subtree of $\Tfr$ rooted at the $i$th child of $x$. A {\em partial subtree}
of $\Tfr$ is a subtree of $\Tfr$ in which some internal nodes have been
replaced by leaves and its descendants has been forgotten. Besides, due
to the planarity of $\Tfr$, its leaves are totally ordered from left to
right and thus are indexed from $1$ to the arity of $\Tfr$. In our
graphical representations, each tree is depicted so that its root is the
uppermost node.
\medskip

Let $S := \sqcup_{n \geq 1} S(n)$ be a graded set. By extension, we say
that the {\em arity} of an element $x$ of $S$ is $n$ provided that
$x \in S(n)$. A {\em syntax tree on $S$} is a planar rooted tree such
that its internal nodes of arity $n$ are labeled on elements of arity
$n$ of $S$. The {\em degree} (resp. {\em arity}) of a syntax tree is its
number of internal nodes (resp. leaves). For instance, if
$S := S(2) \sqcup S(3)$ with $S(2) := \{\La, \Lc\}$ and $S(3) := \{\Lb\}$,
\begin{equation}
    \begin{split}
    \begin{tikzpicture}[xscale=.35,yscale=.17]
        \node(0)at(0.00,-6.50){};
        \node(10)at(8.00,-9.75){};
        \node(12)at(10.00,-9.75){};
        \node(2)at(2.00,-6.50){};
        \node(4)at(3.00,-3.25){};
        \node(5)at(4.00,-9.75){};
        \node(7)at(5.00,-9.75){};
        \node(8)at(6.00,-9.75){};
        \node(1)at(1.00,-3.25){\begin{math}\Lc\end{math}};
        \node(11)at(9.00,-6.50){\begin{math}\La\end{math}};
        \node(3)at(3.00,0.00){\begin{math}\Lb\end{math}};
        \node(6)at(5.00,-6.50){\begin{math}\Lb\end{math}};
        \node(9)at(7.00,-3.25){\begin{math}\La\end{math}};
        \node(r)at(3.00,2.75){};
        \draw(0)--(1); \draw(1)--(3); \draw(10)--(11); \draw(11)--(9);
        \draw(12)--(11); \draw(2)--(1); \draw(4)--(3); \draw(5)--(6);
        \draw(6)--(9); \draw(7)--(6); \draw(8)--(6); \draw(9)--(3);
        \draw(r)--(3);
    \end{tikzpicture}
    \end{split}
\end{equation}
is a syntax tree on $S$ of degree $5$ and arity $8$. Its root is labeled
by $\Lb$ and has arity $3$.
\medskip

\subsubsection{Free operads}
Let $S$ be a graded set. The {\em free operad $\OpLibre(S)$ over $S$} is
the operad wherein for any $n \geq 1$, $\OpLibre(S)(n)$ is the vector
space of syntax trees on $S$ of arity $n$, the partial composition
$\Sfr \circ_i \Tfr$ of two syntax trees $\Sfr$ and $\Tfr$ on $S$
consists in grafting the root of $\Tfr$ on the $i$th leaf of $\Sfr$, and
its unit is the tree consisting in one leaf. For instance, if
$S := S(2) \sqcup S(3)$ with $S(2) := \{\La, \Lc\}$ and
$S(3) := \{\Lb\}$, one has in $\OpLibre(S)$,
\begin{equation}\begin{split}\end{split}
    \begin{split}
    \begin{tikzpicture}[xscale=.4,yscale=.2]
        \node(0)at(0.00,-5.33){};
        \node(2)at(2.00,-5.33){};
        \node(4)at(4.00,-5.33){};
        \node(6)at(5.00,-5.33){};
        \node(7)at(6.00,-5.33){};
        \node[text=Bleu](1)at(1.00,-2.67){\begin{math}\La\end{math}};
        \node[text=Bleu](3)at(3.00,0.00){\begin{math}\La\end{math}};
        \node[text=Bleu](5)at(5.00,-2.67){\begin{math}\Lb\end{math}};
        \node(r)at(3.00,2.25){};
        \draw[draw=Bleu](0)--(1); \draw[draw=Bleu](1)--(3);
        \draw[draw=Bleu](2)--(1); \draw[draw=Bleu](4)--(5);
        \draw[draw=Bleu](5)--(3); \draw[draw=Bleu](6)--(5);
        \draw[draw=Bleu](7)--(5); \draw[draw=Bleu](r)--(3);
    \end{tikzpicture}
    \end{split}
    \circ_3
    \begin{split}
    \begin{tikzpicture}[xscale=.4,yscale=.27]
        \node(0)at(0.00,-1.67){};
        \node(2)at(2.00,-3.33){};
        \node(4)at(4.00,-3.33){};
        \node[text=Rouge](1)at(1.00,0.00){\begin{math}\Lc\end{math}};
        \node[text=Rouge](3)at(3.00,-1.67){\begin{math}\La\end{math}};
        \node(r)at(1.00,1.75){};
        \draw[draw=Rouge](0)--(1); \draw[draw=Rouge](2)--(3);
        \draw[draw=Rouge](3)--(1); \draw[draw=Rouge](4)--(3);
        \draw[draw=Rouge](r)--(1);
    \end{tikzpicture}
    \end{split}
    =
    \begin{split}
    \begin{tikzpicture}[xscale=.4,yscale=.2]
        \node(0)at(0.00,-4.80){};
        \node(10)at(9.00,-4.80){};
        \node(11)at(10.00,-4.80){};
        \node(2)at(2.00,-4.80){};
        \node(4)at(4.00,-7.20){};
        \node(6)at(6.00,-9.60){};
        \node(8)at(8.00,-9.60){};
        \node[text=Bleu](1)at(1.00,-2.40){\begin{math}\La\end{math}};
        \node[text=Bleu](3)at(3.00,0.00){\begin{math}\La\end{math}};
        \node[text=Rouge](5)at(5.00,-4.80){\begin{math}\Lc\end{math}};
        \node[text=Rouge](7)at(7.00,-7.20){\begin{math}\La\end{math}};
        \node[text=Bleu](9)at(9.00,-2.40){\begin{math}\Lb\end{math}};
        \node(r)at(3.00,2.40){};
        \draw[draw=Bleu](0)--(1); \draw[draw=Bleu](1)--(3);
        \draw[draw=Bleu](10)--(9); \draw[draw=Bleu](11)--(9);
        \draw[draw=Bleu](2)--(1); \draw[draw=Rouge](4)--(5);
        \draw[draw=Bleu!50!Rouge!50](5)--(9); \draw[draw=Rouge](6)--(7);
        \draw[draw=Rouge](7)--(5); \draw[draw=Rouge](8)--(7);
        \draw[draw=Bleu](9)--(3); \draw[draw=Bleu](r)--(3);
    \end{tikzpicture}
    \end{split}\,.
\end{equation}
\medskip

We denote by $\Corolle : S \to \OpLibre(S)$ the inclusion map, sending
any $x$ of $S$ to the {\em corolla} labeled by $x$, that is the syntax
tree consisting in one internal node labeled by $x$ attached to a
required number of leaves. In the sequel, if required by the context, we
shall implicitly see any element $x$ of $S$ as the corolla $\Corolle(x)$
of $\OpLibre(S)$. For instance, when $x$ and $y$ are two elements of $S$,
we shall simply denote by $x \circ_i y$ the syntax tree
$\Corolle(x) \circ_i \Corolle(y)$ for all valid integers $i$.
\medskip

For any operad $\Oca$, by seeing $\Oca$ as a graded set, $\OpLibre(\Oca)$
is the free operad of the syntax trees linearly labeled by elements of
$\Oca$. The {\em evaluation map} of $\Oca$ is the map
\begin{equation}
    \Eval_\Oca : \OpLibre(\Oca) \to \Oca,
\end{equation}
recursively defined by
\begin{equation}
    \Eval_\Oca(\Tfr) :=
    \begin{cases}
        \Unite & \mbox{if } \Tfr \mbox{ is the leaf}, \\
        x \circ (\Eval_\Oca(\Sfr_1), \dots, \Eval_\Oca(\Sfr_n)) &
        \mbox{otherwise},
    \end{cases}
\end{equation}
where $\Unite$ is the unit of $\Oca$, $x$ is the label of the root of
$\Tfr$, and $\Sfr_1$, \dots, $\Sfr_n$ are, from left to right, the
subtrees of the root of $\Tfr$. In other words, any tree $\Tfr$ of
$\OpLibre(\Oca)$ can be seen as a tree-like expression for an element
$\Eval_\Oca(\Tfr)$ of $\Oca$. Moreover, by induction on the degree of
$\Tfr$, it appears that $\Eval_\Oca$ is a well-defined surjective operad
morphism.
\medskip

\subsubsection{Presentations by generators and relations}
A {\em presentation} of an operad $\Oca$ consists in a pair
$(\GenLibre, \RelLibre)$ such that
$\GenLibre := \sqcup_{n \geq 1} \GenLibre(n)$ is a graded set,
$\RelLibre$ is a subspace of $\OpLibre(\GenLibre)$, and  $\Oca$ is
isomorphic to $\OpLibre(\GenLibre)/_{\langle \RelLibre \rangle}$,  where
$\langle \RelLibre \rangle$ is the operad ideal of $\OpLibre(\GenLibre)$
generated by $\RelLibre$. We call $\GenLibre$ the {\em set of generators}
and $\RelLibre$ the {\em space of relations} of $\Oca$. We say that
$\Oca$ is {\em quadratic} if one can exhibit a presentation
$(\GenLibre, \RelLibre)$ of $\Oca$ such that $\RelLibre$ is a homogeneous
subspace of $\OpLibre(\GenLibre)$ consisting in syntax trees of degree
$2$. Besides, we say that $\Oca$ is {\em binary} if one can exhibit a
presentation $(\GenLibre, \RelLibre)$ of $\Oca$ such that $\GenLibre$ is
concentrated in arity~$2$.
\medskip

With knowledge of a presentation $(\GenLibre, \RelLibre)$ of $\Oca$, it
is easy to describe the category of the $\Oca$-algebras. Indeed, by
denoting by
$\pi : \OpLibre(\GenLibre) \to \OpLibre(\GenLibre)/_{\langle \RelLibre \rangle}$
the canonical surjection map, the category of $\Oca$-algebras is the
category of vector spaces $\Alg_\Oca$ endowed with maps $\pi(g)$,
$g \in \GenLibre$, satisfying for all $r \in \RelLibre$ the relations
\begin{equation}
    r(e_1,\dots, e_n) = 0,
\end{equation}
for all $e_1, \dots, e_n \in \Alg_\Oca$, where $n$ is the arity of $r$.
\medskip

\subsubsection{Rewrite rules}
Let $S$ be a graded set. A {\em rewrite rule} on syntax trees on $S$ is
a binary relation $\Recr$ on $\OpLibre(S)$ whenever for all trees $\Sfr$
and $\Tfr$ of $\OpLibre(S)$, $\Sfr \Recr \Tfr$ only if $\Sfr$ and $\Tfr$
have the same arity. When $\Recr$ involves only syntax trees of degree
two, $\Recr$ is {\em quadratic}. We say that a syntax tree $\Sfr'$ can
be {\em rewritten} by $\Recr$ into $\Tfr'$ if there exist two syntax
trees $\Sfr$ and $\Tfr$ satisfying $\Sfr \Recr \Tfr$ and $\Sfr'$ has a
partial subtree equal to $\Sfr$ such that, by replacing it by $\Tfr$ in
$\Sfr'$, we obtain $\Tfr'$. By a slight but convenient abuse of notation,
we denote by $\Sfr' \Recr \Tfr'$ this property. When a syntax tree $\Tfr$
can be obtained by performing a sequence of $\Recr$-rewritings from a
syntax tree $\Sfr$, we say that $\Sfr$ is {\em rewritable} by $\Recr$
into $\Tfr$ and we denote this property by $\Sfr \overset{*}{\Recr} \Tfr$.
For instance, for $S := S(2) \sqcup S(3)$ with $S(2) := \{\La, \Lc\}$
and $S(3) := \{\Lb\}$, consider the rewrite rule $\Recr$ on $\OpLibre(S)$
satisfying
\begin{equation}
    \begin{split}
    \begin{tikzpicture}[xscale=.4,yscale=.25]
        \node(0)at(0.00,-2.00){};
        \node(2)at(1.00,-2.00){};
        \node(3)at(2.00,-2.00){};
        \node(1)at(1.00,0.00){\begin{math}\Lb\end{math}};
        \draw(0)--(1);
        \draw(2)--(1);
        \draw(3)--(1);
        \node(r)at(1.00,1.75){};
        \draw(r)--(1);
    \end{tikzpicture}
    \end{split}
    \begin{split} \enspace \Recr \enspace \end{split}
    \begin{split}
    \begin{tikzpicture}[xscale=.3,yscale=.3]
        \node(0)at(0.00,-3.33){};
        \node(2)at(2.00,-3.33){};
        \node(4)at(4.00,-1.67){};
        \node(1)at(1.00,-1.67){\begin{math}\La\end{math}};
        \node(3)at(3.00,0.00){\begin{math}\La\end{math}};
        \draw(0)--(1);
        \draw(1)--(3);
        \draw(2)--(1);
        \draw(4)--(3);
        \node(r)at(3.00,1.5){};
        \draw(r)--(3);
    \end{tikzpicture}
    \end{split}
    \qquad \mbox{and} \qquad
    \begin{split}
    \begin{tikzpicture}[xscale=.3,yscale=.3]
        \node(0)at(0.00,-3.33){};
        \node(2)at(2.00,-3.33){};
        \node(4)at(4.00,-1.67){};
        \node(1)at(1.00,-1.67){\begin{math}\La\end{math}};
        \node(3)at(3.00,0.00){\begin{math}\Lc\end{math}};
        \draw(0)--(1);
        \draw(1)--(3);
        \draw(2)--(1);
        \draw(4)--(3);
        \node(r)at(3.00,1.5){};
        \draw(r)--(3);
    \end{tikzpicture}
    \end{split}
    \begin{split} \enspace \Recr \enspace \end{split}
    \begin{split}
    \begin{tikzpicture}[xscale=.3,yscale=.3]
        \node(0)at(0.00,-1.67){};
        \node(2)at(2.00,-3.33){};
        \node(4)at(4.00,-3.33){};
        \node(1)at(1.00,0.00){\begin{math}\La\end{math}};
        \node(3)at(3.00,-1.67){\begin{math}\Lc\end{math}};
        \draw(0)--(1);
        \draw(2)--(3);
        \draw(3)--(1);
        \draw(4)--(3);
        \node(r)at(1.00,1.5){};
        \draw(r)--(1);
    \end{tikzpicture}
    \end{split}\,.
\end{equation}
We then have the following sequence of rewritings
\begin{equation}
    \begin{split}
    \begin{tikzpicture}[xscale=.22,yscale=.17]
        \node(0)at(0.00,-6.50){};
        \node(10)at(8.00,-6.50){};
        \node(12)at(10.00,-6.50){};
        \node(2)at(1.00,-9.75){};
        \node(4)at(3.00,-9.75){};
        \node(5)at(4.00,-9.75){};
        \node(7)at(5.00,-9.75){};
        \node(8)at(6.00,-9.75){};
        \node[text=Rouge](1)at(2.00,-3.25){\begin{math}\Lb\end{math}};
        \node(11)at(9.00,-3.25){\begin{math}\La\end{math}};
        \node(3)at(2.00,-6.50){\begin{math}\Lc\end{math}};
        \node(6)at(5.00,-6.50){\begin{math}\Lb\end{math}};
        \node(9)at(7.00,0.00){\begin{math}\Lc\end{math}};
        \draw[draw=Rouge](0)--(1);
        \draw[draw=Rouge](1)--(9);
        \draw(10)--(11);
        \draw(11)--(9);
        \draw(12)--(11);
        \draw(2)--(3);
        \draw[draw=Rouge](3)--(1);
        \draw(4)--(3);
        \draw(5)--(6);
        \draw[draw=Rouge](6)--(1);
        \draw(7)--(6);
        \draw(8)--(6);
        \node(r)at(7.00,2.5){};
        \draw(r)--(9);
    \end{tikzpicture}
    \end{split}
    \begin{split} \enspace \Recr \enspace \end{split}
    \begin{split}
    \begin{tikzpicture}[xscale=.22,yscale=.17]
        \node(0)at(0.00,-8.40){};
        \node(11)at(10.00,-5.60){};
        \node(13)at(12.00,-5.60){};
        \node(2)at(2.00,-11.20){};
        \node(4)at(4.00,-11.20){};
        \node(6)at(6.00,-8.40){};
        \node(8)at(7.00,-8.40){};
        \node(9)at(8.00,-8.40){};
        \node(1)at(1.00,-5.60){\begin{math}\La\end{math}};
        \node[text=Rouge](10)at(9.00,0.00){\begin{math}\Lc\end{math}};
        \node(12)at(11.00,-2.80){\begin{math}\La\end{math}};
        \node(3)at(3.00,-8.40){\begin{math}\Lc\end{math}};
        \node[text=Rouge](5)at(5.00,-2.80){\begin{math}\La\end{math}};
        \node(7)at(7.00,-5.60){\begin{math}\Lb\end{math}};
        \draw(0)--(1);
        \draw[draw=Rouge](1)--(5);
        \draw(11)--(12);
        \draw[draw=Rouge](12)--(10);
        \draw(13)--(12);
        \draw(2)--(3);
        \draw(3)--(1);
        \draw(4)--(3);
        \draw[draw=Rouge](5)--(10);
        \draw(6)--(7);
        \draw[draw=Rouge](7)--(5);
        \draw(8)--(7);
        \draw(9)--(7);
        \node(r)at(9.00,2.25){};
        \draw[draw=Rouge](r)--(10);
    \end{tikzpicture}
    \end{split}
    \begin{split} \enspace \Recr \enspace \end{split}
    \begin{split}
    \begin{tikzpicture}[xscale=.22,yscale=.17]
        \node(0)at(0.00,-7.00){};
        \node(11)at(10.00,-10.50){};
        \node(13)at(12.00,-10.50){};
        \node(2)at(2.00,-10.50){};
        \node(4)at(4.00,-10.50){};
        \node(6)at(6.00,-10.50){};
        \node(8)at(7.00,-10.50){};
        \node(9)at(8.00,-10.50){};
        \node(1)at(1.00,-3.50){\begin{math}\La\end{math}};
        \node(10)at(9.00,-3.50){\begin{math}\Lc\end{math}};
        \node(12)at(11.00,-7.00){\begin{math}\La\end{math}};
        \node(3)at(3.00,-7.00){\begin{math}\Lc\end{math}};
        \node(5)at(5.00,0.00){\begin{math}\La\end{math}};
        \node[text=Rouge](7)at(7.00,-7.00){\begin{math}\Lb\end{math}};
        \draw(0)--(1);
        \draw(1)--(5);
        \draw(10)--(5);
        \draw(11)--(12);
        \draw(12)--(10);
        \draw(13)--(12);
        \draw(2)--(3);
        \draw(3)--(1);
        \draw(4)--(3);
        \draw[draw=Rouge](6)--(7);
        \draw[draw=Rouge](7)--(10);
        \draw[draw=Rouge](8)--(7);
        \draw[draw=Rouge](9)--(7);
        \node(r)at(5.00,2.5){};
        \draw(r)--(5);
    \end{tikzpicture}
    \end{split}
    \begin{split} \enspace \Recr \enspace \end{split}
    \begin{split}
    \begin{tikzpicture}[xscale=.22,yscale=.17]
        \node(0)at(0.00,-6.00){};
        \node(10)at(10.00,-9.00){};
        \node(12)at(12.00,-9.00){};
        \node(14)at(14.00,-9.00){};
        \node(2)at(2.00,-9.00){};
        \node(4)at(4.00,-9.00){};
        \node(6)at(6.00,-12.00){};
        \node(8)at(8.00,-12.00){};
        \node(1)at(1.00,-3.00){\begin{math}\La\end{math}};
        \node(11)at(11.00,-3.00){\begin{math}\Lc\end{math}};
        \node(13)at(13.00,-6.00){\begin{math}\La\end{math}};
        \node(3)at(3.00,-6.00){\begin{math}\Lc\end{math}};
        \node(5)at(5.00,0.00){\begin{math}\La\end{math}};
        \node(7)at(7.00,-9.00){\begin{math}\La\end{math}};
        \node(9)at(9.00,-6.00){\begin{math}\La\end{math}};
        \draw(0)--(1);
        \draw(1)--(5);
        \draw(10)--(9);
        \draw(11)--(5);
        \draw(12)--(13);
        \draw(13)--(11);
        \draw(14)--(13);
        \draw(2)--(3);
        \draw(3)--(1);
        \draw(4)--(3);
        \draw(6)--(7);
        \draw(7)--(9);
        \draw(8)--(7);
        \draw(9)--(11);
        \node(r)at(5.00,2.25){};
        \draw(r)--(5);
    \end{tikzpicture}
    \end{split}\,.
\end{equation}
We shall use the standard terminology ({\em confluent}, {\em terminating},
{\em convergent}, {\em normal form}, {\em critical pair}, {\em etc.})
about rewrite rules (see~\cite{BN98}).
\medskip

Any rewrite rule $\Recr$ on $\OpLibre(S)$ defines an operad congruence
$\Congr_\Recr$ on $\OpLibre(S)$ seen as a set-operad, the
{\em operad congruence induced} by $\Recr$, as the finest operad
congruence on $\OpLibre(S)$ containing the reflexive, symmetric, and
transitive closure of $\Recr$.
\medskip

\subsubsection{Koszulity}%
\label{subsubsec:Koszulite}
A quadratic operad $\Oca$ is {\em Koszul} if its Koszul complex is
acyclic~\cite{GK94,LV12}. In this work, to prove the Koszulity of an
operad $\Oca$, we shall make use of a combinatorial tool introduced by
Hoffbeck~\cite{Hof10} (see also~\cite{LV12}) consisting in exhibiting a
particular basis of $\Oca$, a so-called {\em Poincaré-Birkhoff-Witt basis}.
\medskip

In this paper, we shall use this tool only in the context of set-operads,
which reformulates, thanks to the work of Dotsenko and
Khoroshkin~\cite{DK10}, as follows. A set-operad $\Oca$ is Kosuzl if
there is a graded set $S$ and a rewrite rule $\Recr$ on $\OpLibre(S)$
such that $\Oca$ is isomorphic to $\OpLibre(S)/_{\Congr_\Recr}$ and
$\Recr$ is a convergent quadratic rewrite rule. Moreover, the set of
normal forms of $\Recr$ forms a Poincaré-Birkhoff-Witt basis of $\Oca$.
\medskip

\subsection{Diassociative operad}%
\label{subsec:dias}
We recall here, by using the notions presented during the previous
sections, the definition and some properties of the diassociative operad.
\medskip

The {\em diassociative operad} $\Dias$ was introduced by
Loday~\cite{Lod01} as the operad admitting the presentation
$\left(\GenLibre_{\Dias}, \RelLibre_{\Dias}\right)$ where
$\GenLibre_{\Dias} := \GenLibre_{\Dias}(2) := \{\GDias, \DDias\}$
and $\RelLibre_{\Dias}$ is the space induced by the equivalence
relation $\Congr$ satisfying
\begin{subequations}
\begin{equation} \label{equ:relation_dias_1}
    \GDias \circ_1 \DDias \; \Congr \;
    \DDias \circ_2 \GDias,
\end{equation}
\begin{equation} \label{equ:relation_dias_2}
    \GDias \circ_1 \GDias \; \Congr \;
    \GDias \circ_2 \GDias \; \Congr \;
    \GDias \circ_2 \DDias,
\end{equation}
\begin{equation} \label{equ:relation_dias_3}
    \DDias \circ_1 \GDias \; \Congr \;
    \DDias \circ_1 \DDias \; \Congr \;
    \DDias \circ_2 \DDias.
\end{equation}
\end{subequations}
Note that $\Dias$ is a binary and quadratic operad.
\medskip

This operad admits the following realization~\cite{Cha05}. For any
$n \geq 1$, $\Dias(n)$ is the linear span of the $\Efr_{n, k}$,
$k \in [n]$, and the partial compositions linearly satisfy, for all
$n, m \geq 1$, $k \in [n]$, $\ell \in [m]$, and $i \in [n]$,
\begin{equation}
    \Efr_{n, k} \circ_i \Efr_{m, \ell} =
    \begin{cases}
        \Efr_{n + m - 1, k + m - 1}
            & \mbox{if } i < k, \\
        \Efr_{n + m - 1, k + \ell - 1}
            & \mbox{if } i = k, \\
        \Efr_{n + m - 1, k}
            & \mbox{otherwise (} i > k \mbox{)}.
    \end{cases}
\end{equation}
Since the partial composition of two basis elements of $\Dias$ produces
exactly one basis element, $\Dias$ is well-defined as a set-operad.
Moreover, this realization shows that $\dim \Dias(n) = n$ and hence,
the Hilbert series of $\Dias$ satisfies
\begin{equation}
    \Hca_\Dias(t) = \frac{t}{(1 - t)^2}.
\end{equation}
\medskip

From the presentation of $\Dias$, we deduce that any $\Dias$-algebra,
also called {\em diassociative algebra}, is a vector space $\Alg_\Dias$
endowed with linear operations $\GDias$ and $\DDias$ satisfying the
relations encoded by~\eqref{equ:relation_dias_1}---%
\eqref{equ:relation_dias_3}.
\medskip

From the realization of $\Dias$, we deduce that the free diassociative
algebra $\AlgLibre_\Dias$ over one generator is the vector space $\Dias$
endowed with the linear operations
\begin{equation}
    \GDias, \DDias :
    \AlgLibre_\Dias \otimes \AlgLibre_\Dias \to \AlgLibre_\Dias,
\end{equation}
satisfying, for all $n, m \geq 1$, $k \in [n]$, $\ell \in [m]$,
\begin{equation}
    \Efr_{n, k} \GDias \Efr_{m, \ell}
        = (\Efr_{n, k} \otimes \Efr_{m, \ell}) \Action \Efr_{2, 1}
        = (\Efr_{2, 1} \circ_2 \Efr_{m, \ell}) \circ_1 \Efr_{n, k}
        = \Efr_{n + m, k},
\end{equation}
and
\begin{equation}
    \Efr_{n, k} \DDias \Efr_{m, \ell}
        = (\Efr_{n, k} \otimes \Efr_{m, \ell}) \Action \Efr_{2, 2}
        = (\Efr_{2, 2} \circ_2 \Efr_{m, \ell}) \circ_1 \Efr_{n, k}
        = \Efr_{n + m, n + \ell}.
\end{equation}
\medskip

As shown in~\cite{Gir12,Gir15}, the diassociative operad is isomorphic
to the suboperad of $\T \Mca$ generated by $01$ and $10$ where $\Mca$ is
the multiplicative monoid on $\{0, 1\}$. The concerned isomorphism sends
any $\Efr_{n, k}$ of $\Dias$ to the word $0^{k - 1} \, 1 \, 0^{n - k}$ of
$\T \Mca$.
\medskip

\section{Pluriassociative operads}%
\label{sec:dias_gamma}
In this section, we define the main object of this work: a generalization
on a nonnegative integer parameter $\gamma$ of the diassociative operad.
We provide a complete study of this new operad.
\medskip

\subsection{Construction and first properties}
We define here our generalization of the diassociative operad using the
functor $\T$ (whose definition is recalled in
Section~\ref{subsec:monoides_vers_operades}). We then describe the
elements and establish the Hilbert series of our generalization.
\medskip

\subsubsection{Construction} \label{subsubsec:construction_dias_gamma}
For any integer $\gamma \geq 0$, let $\Mca_\gamma$ be the monoid
$\{0\} \cup [\gamma]$ with the binary operation $\max$ as product,
denoted by $\Max$. We define $\Dias_\gamma$ as the suboperad of
$\T \Mca_\gamma$ generated by
\begin{equation} \label{equ:generateurs_dias_gamma}
    \{0a, a0 : a \in [\gamma]\}.
\end{equation}
By definition, $\Dias_\gamma$ is the vector space of words that can be
obtained by partial compositions of words
of~\eqref{equ:generateurs_dias_gamma}. We have, for instance,
\begin{equation}
    \Dias_2(1)
    =\Vect(\{0\}),
\end{equation}
\begin{equation}
    \Dias_2(2)
    =\Vect(\{01, 02, 10, 20\}),
\end{equation}
\begin{equation}
    \Dias_2(3)
    =\Vect(\{011, 012, 021, 022, 101, 102, 201, 202, 110, 120, 210, 220\}),
\end{equation}
and
\begin{equation}
    \textcolor{Bleu}{211} {\bf 2} \textcolor{Bleu}{01}
        \circ_4 \textcolor{Rouge}{31103}
    = \textcolor{Bleu}{211} \textcolor{Rouge}{32223} \textcolor{Bleu}{01},
\end{equation}
\begin{equation}
    \textcolor{Bleu}{11} {\bf 1} \textcolor{Bleu}{101}
        \circ_3 \textcolor{Rouge}{20}
    = \textcolor{Bleu}{11} \textcolor{Rouge}{21} \textcolor{Bleu}{101},
\end{equation}
\begin{equation}
    \textcolor{Bleu}{1} {\bf 0} \textcolor{Bleu}{13}
        \circ_2 \textcolor{Rouge}{210}
    = \textcolor{Bleu}{1} \textcolor{Rouge}{210} \textcolor{Bleu}{13}.
\end{equation}
\medskip

It follows immediately from the definition of $\Dias_\gamma$ as a
suboperad of $\T \Mca_\gamma$ that $\Dias_\gamma$ is a set-operad. Indeed,
any partial composition of two basis elements of $\Dias_\gamma$ gives
rises to exactly one basis element. We then shall see $\Dias_\gamma$ as
a set-operad over all Section~\ref{sec:dias_gamma}.
\medskip

Notice that $\Dias_\gamma(2)$ is the set~\eqref{equ:generateurs_dias_gamma}
of generators of $\Dias_\gamma$. Besides, observe that $\Dias_0$ is the
trivial operad and that $\Dias_\gamma$ is a suboperad of $\Dias_{\gamma + 1}$.
We call $\Dias_\gamma$ the {\em $\gamma$-pluriassociative operad}.
\medskip

\subsubsection{Elements and dimensions}
\begin{Proposition} \label{prop:elements_dias_gamma}
    For any integer $\gamma \geq 0$, as a set-operad, the underlying set
    of $\Dias_\gamma$ is the set of the words on the alphabet
    $\{0\} \cup [\gamma]$ containing exactly one occurrence of $0$.
\end{Proposition}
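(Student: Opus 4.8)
The statement has two inclusions: every element of $\Dias_\gamma$ is a word on $\{0\} \cup [\gamma]$ with exactly one $0$, and conversely every such word lies in $\Dias_\gamma$. For the first inclusion, I would argue by induction on the construction of elements of $\Dias_\gamma$ as iterated partial compositions of the generators $0a$ and $a0$, $a \in [\gamma]$. The base case is clear: each generator is a word of length two with exactly one $0$, and the unit $0 \in \Dias_\gamma(1)$ has one $0$. For the inductive step, suppose $u, v \in \Dias_\gamma$ are words with exactly one $0$ each. Recall from Section~\ref{subsec:monoides_vers_operades} that
\begin{equation*}
    u \circ_i v = u_1 \dots u_{i-1}\,(u_i \Max v_1)\,\dots\,(u_i \Max v_m)\,u_{i+1}\dots u_n .
\end{equation*}
If $u_i = 0$, then $u_i \Max v_j = v_j$ for all $j$, so the composed word is $u$ with its unique $0$ replaced by $v$; the only $0$ now comes from the unique $0$ of $v$. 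If $u_i \neq 0$, then $u_i \Max v_j \neq 0$ for all $j$, so the inserted block contains no $0$, and the composed word has exactly one $0$, inherited from the unique $0$ of $u$ (which is at a position other than $i$). In both cases the result is a word on $\{0\} \cup [\gamma]$ with exactly one $0$, so this property is preserved under partial composition and hence holds for all of $\Dias_\gamma$.

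For the converse inclusion, I would show that every word $w$ on $\{0\} \cup [\gamma]$ with exactly one $0$ can be produced from the generators by partial compositions, arguing by induction on the length $n = |w|$. If $n = 1$, then $w = 0$ is the unit. If $n = 2$, then $w \in \{0a, a0 : a \in [\gamma]\}$ is a generator. For $n \geq 3$, write $w = w_1 \dots w_n$ and let $p$ be the position of the unique $0$. The idea is to peel off one letter at the end away from the $0$. Suppose first $p < n$, so $w_n \neq 0$; set $a := w_n$ and $w' := w_1 \dots w_{n-1}$. Then $w'$ still has exactly one $0$ (at position $p$), so by induction $w' \in \Dias_\gamma(n-1)$, and I claim $w = w' \circ_{n-1} (0a)$: indeed $w'_{n-1} \neq 0$, so $w'_{n-1} \Max 0 = w'_{n-1}$ and $w'_{n-1} \Max a = \max(w'_{n-1}, a)$. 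This gives the word $w_1 \dots w_{n-2}\, w'_{n-1}\, \max(w'_{n-1}, a)$, which is not quite $w$ unless $w'_{n-1} \leq a$. So a little more care is needed in choosing which generator and which position.

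The clean fix: instead of grafting at the end, graft at position $p$ (where the $0$ sits), using $u_p = 0$ so that $0 \Max x = x$ and no distortion occurs. Concretely, if $p \geq 2$ let $a := w_{p-1} \in [\gamma]$, and consider $w'' := w_1 \dots w_{p-2}\, a\, w_{p+1} \dots w_n$, the word obtained by deleting the $0$ and keeping its left neighbour; this has exactly one $0$ is false — so instead the right approach is: the unique $0$ together with an adjacent nonzero letter forms a length-$2$ factor $0a$ or $a0$ which is a generator, and one recovers $w$ by composing a shorter word (with the $0$-block contracted to a single $0$) at that $0$. Precisely, if $p \geq 2$, let $w^{\flat} := w_1 \dots w_{p-2}\, 0\, w_{p+1} \dots w_n$ (length $n-1$, still exactly one $0$, at position $p-1$); by induction $w^{\flat} \in \Dias_\gamma$, and $w^{\flat} \circ_{p-1} (w_{p-1}\,0) = w$ since $(w^{\flat})_{p-1} = 0$ forces $0 \Max w_{p-1} = w_{p-1}$ and $0 \Max 0 = 0$, reproducing the factor $w_{p-1}\,0$ exactly in place. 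If instead $p = 1$, use $w^{\flat} := 0\, w_3 \dots w_n$ and $w^{\flat} \circ_1 (0\, w_2)$ symmetrically. This completes the induction and the proof; the only subtlety — the place where the $\max$ operation could misbehave — is sidestepped entirely by always grafting onto the letter $0$, which is absorbing neither side but acts as the identity for $\Max$ on the left. I expect this choice of graft site to be the one genuinely delicate point; everything else is routine bookkeeping on word lengths and positions.
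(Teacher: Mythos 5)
Your argument is correct and, once you discard the initial attempt at peeling a letter off the end, it coincides with the paper's proof: the forward inclusion is the same observation that composing with a generator inserts exactly one nonzero letter (since $\Max$ with a letter of $[\gamma]$ never produces $0$), and the converse is the same induction on length, erasing a nonzero neighbour of the unique $0$ and recovering the word by composing the generator $a0$ (or $0a$) at the position of the $0$, exactly where the $\Max$ causes no distortion. So the proposal is essentially the paper's argument, with the graft-at-the-$0$ trick correctly identified as the one delicate point.
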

\begin{proof}
    Let us show that any word $x$ of $\Dias_\gamma$ satisfies the
    statement of the proposition by induction on the length $n$ of $x$.
    This is true when $n = 1$ because we necessarily have $x = 0$.
    Otherwise, when $n \geq 2$, there is a word $y$ of $\Dias_\gamma$
    of length $n - 1$ and a generator $g$ of $\Dias_\gamma$ such that
    $x = y \circ_i g$ for a $i \in [n - 1]$. Then, $x$ is obtained by
    replacing the $i$th letter $a$ of $y$ by the factor $u := u_1 u_2$
    where $u_1 := a \Max g_1$ and $u_2 := a \Max g_2$. Since $g$ contains
    exactly one $0$, this operation consists in inserting a nonzero
    letter of $[\gamma]$ into $y$. Since by induction hypothesis $y$
    contains exactly one $0$, it follows that $x$ satisfies the
    statement of the proposition.
    \smallskip

    Conversely, let us show that any word $x$ satisfying the statement
    of the proposition belongs to $\Dias_\gamma$ by induction on the
    length $n$ of $x$. This is true when $n = 1$ because we necessarily
    have $x = 0$ and $0$ belongs to $\Dias_\gamma$ since it is its unit.
    Otherwise, when $n \geq 2$, there is an integer $i \in [n - 1]$ such
    that $x_i x_{i + 1} \in \{0a, a0\}$ for an $a \in [\gamma]$. Let us
    suppose without loss of generality that $x_i x_{i + 1} = a0$. By
    setting $y$ as the word obtained by erasing the $i$th letter of $x$,
    we have $x = y \circ_i a0$. Thus, since by induction hypothesis $y$
    is an element of $\Dias_\gamma$, it follows that $x$ also is.
\end{proof}
\medskip

We deduce from Proposition~\ref{prop:elements_dias_gamma} that the
Hilbert series of $\Dias_\gamma$ satisfies
\begin{equation} \label{equ:serie_hilbert_dias_gamma}
    \Hca_{\Dias_\gamma}(t) = \frac{t}{(1 - \gamma t)^2}
\end{equation}
and that for all $n \geq 1$, $\dim \Dias_\gamma(n) = n \gamma^{n - 1}$.
For instance, the first dimensions of $\Dias_1$, $\Dias_2$, $\Dias_3$,
and $\Dias_4$ are respectively
\begin{equation}
    1, 2, 3, 4, 5, 6, 7, 8, 9, 10, 11,
\end{equation}
\begin{equation}
    1, 4, 12, 32, 80, 192, 448, 1024, 2304, 5120, 11264,
\end{equation}
\begin{equation}
    1, 6, 27, 108, 405, 1458, 5103, 17496, 59049, 196830, 649539,
\end{equation}
\begin{equation}
    1, 8, 48, 256, 1280, 6144, 28672, 131072, 589824, 2621440, 11534336.
\end{equation}
The second one is Sequence~\Sloane{A001787}, the third one is
Sequence~\Sloane{A027471}, and the last one is Sequence~\Sloane{A002697}
of~\cite{Slo}.
\medskip

\subsection{Presentation by generators and relations}%
\label{subsec:presentation_dias_gamma}
To establish a presentation of $\Dias_\gamma$, we shall start by defining
a morphism $\Mot_\gamma$ from a free operad to $\Dias_\gamma$. Then,
after showing that $\Mot_\gamma$ is a surjection, we will show that
$\Mot_\gamma$ induces an operad isomorphism between a quotient of
a free operad by a certain operad congruence $\Congr_\gamma$ and
$\Dias_\gamma$. The space of relations of $\Dias_\gamma$ of its
presentation will be induced by $\Congr_\gamma$.
\medskip

\subsubsection{From syntax trees to words}%
\label{subsubsec:arbres_vers_mots}
For any integer $\gamma \geq 0$, let $\GenDias := \GenDias(2)$ be the
graded set where
\begin{equation}
    \GenDias(2) := \{\GDias_a, \DDias_a : a \in [\gamma]\}.
\end{equation}
\medskip

Let $\Tfr$ be a syntax tree of $\OpLibre\left(\GenDias\right)$ and $x$
be a leaf of $\Tfr$. We say that an integer $a \in \{0\} \cup [\gamma]$
is {\em eligible} for $x$ if $a = 0$ or there is an ancestor $y$ of $x$
labeled by $\GDias_a$ (resp. $\DDias_a$) and $x$ is in the right (resp.
left) subtree of $y$. The {\em image} of $x$ is its greatest eligible
integer. Moreover, let
\begin{equation} \label{equ:application_mot_gamma}
    \Mot_\gamma : \OpLibre\left(\GenDias\right)(n) \to \Dias_\gamma(n),
    \qquad n \geq 1,
\end{equation}
the map where $\Mot_\gamma(\Tfr)$ is the word obtained by considering,
from left to right, the images of the leaves of $\Tfr$ (see
Figure~\ref{fig:exemple_mot_gamma}).
\begin{figure}[ht]
    \centering
     \begin{tikzpicture}[xscale=.28,yscale=.18]
        \node(0)at(0.00,-11.50){};
        \node(10)at(10.00,-7.67){};
        \node(12)at(12.00,-15.33){};
        \node(14)at(14.00,-15.33){};
        \node(16)at(16.00,-19.17){};
        \node(18)at(18.00,-19.17){};
        \node(2)at(2.00,-11.50){};
        \node(20)at(20.00,-19.17){};
        \node(22)at(22.00,-19.17){};
        \node(4)at(4.00,-11.50){};
        \node(6)at(6.00,-15.33){};
        \node(8)at(8.00,-15.33){};
        \node(1)at(1.00,-7.67){\begin{math}\GDias_4\end{math}};
        \node(11)at(11.00,-3.83){\begin{math}\DDias_2\end{math}};
        \node(13)at(13.00,-11.50){\begin{math}\DDias_1\end{math}};
        \node(15)at(15.00,-7.67){\begin{math}\DDias_3\end{math}};
        \node(17)at(17.00,-15.33){\begin{math}\DDias_2\end{math}};
        \node(19)at(19.00,-11.50){\begin{math}\GDias_1\end{math}};
        \node(21)at(21.00,-15.33){\begin{math}\DDias_4\end{math}};
        \node(3)at(3.00,-3.83){\begin{math}\DDias_3\end{math}};
        \node(5)at(5.00,-7.67){\begin{math}\GDias_1\end{math}};
        \node(7)at(7.00,-11.50){\begin{math}\GDias_2\end{math}};
        \node(9)at(9.00,0.00){\begin{math}\GDias_2\end{math}};
        \draw(0)--(1); \draw(1)--(3); \draw(10)--(11); \draw(11)--(9);
        \draw(12)--(13); \draw(13)--(15); \draw(14)--(13); \draw(15)--(11);
        \draw(16)--(17); \draw(17)--(19); \draw(18)--(17); \draw(19)--(15);
        \draw(2)--(1); \draw(20)--(21); \draw(21)--(19); \draw(22)--(21);
        \draw(3)--(9); \draw(4)--(5); \draw(5)--(3); \draw(6)--(7);
        \draw(7)--(5); \draw(8)--(7);
        \node(r)at(9,3){};
        \draw(9)--(r);
        \node[below of=0,node distance=3mm]
            {\small \begin{math}\textcolor{Bleu}{3}\end{math}};
        \node[below of=2,node distance=3mm]
            {\small \begin{math}\textcolor{Bleu}{4}\end{math}};
        \node[below of=4,node distance=3mm]
            {\small \begin{math}\textcolor{Bleu}{0}\end{math}};
        \node[below of=6,node distance=3mm]
            {\small \begin{math}\textcolor{Bleu}{1}\end{math}};
        \node[below of=8,node distance=3mm]
            {\small \begin{math}\textcolor{Bleu}{2}\end{math}};
        \node[below of=10,node distance=3mm]
            {\small \begin{math}\textcolor{Bleu}{2}\end{math}};
        \node[below of=12,node distance=3mm]
            {\small \begin{math}\textcolor{Bleu}{3}\end{math}};
        \node[below of=14,node distance=3mm]
            {\small \begin{math}\textcolor{Bleu}{3}\end{math}};
        \node[below of=16,node distance=3mm]
            {\small \begin{math}\textcolor{Bleu}{2}\end{math}};
        \node[below of=18,node distance=3mm]
            {\small \begin{math}\textcolor{Bleu}{2}\end{math}};
        \node[below of=20,node distance=3mm]
            {\small \begin{math}\textcolor{Bleu}{4}\end{math}};
        \node[below of=22,node distance=3mm]
            {\small \begin{math}\textcolor{Bleu}{2}\end{math}};
    \end{tikzpicture}
    \caption{A syntax tree $\Tfr$ of $\OpLibre\left(\GenDias\right)$
    where images of its leaves are shown. This tree satisfies
    $\Mot_\gamma(\Tfr) = \textcolor{Bleu}{340122332242}$.}
    \label{fig:exemple_mot_gamma}
\end{figure}
\medskip

\begin{Lemme} \label{lem:mot_gamma_morphisme}
    For any integer $\gamma \geq 0$, the map $\Mot_\gamma$ is an
    operad morphism from $\OpLibre\left(\GenDias\right)$ to
    $\Dias_\gamma$.
\end{Lemme}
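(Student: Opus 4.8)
The goal is to show that $\Mot_\gamma$ commutes with partial composition; since $\OpLibre(\GenDias)(1)$ and $\Dias_\gamma(1)$ are both one-dimensional and $\Mot_\gamma$ clearly sends the single leaf to $0$ (the unit), arity-preservation and unit-preservation are immediate, and the only real content is the identity
\begin{equation}
    \Mot_\gamma(\Sfr \circ_i \Tfr) = \Mot_\gamma(\Sfr) \circ_i \Mot_\gamma(\Tfr)
\end{equation}
for all syntax trees $\Sfr$ of arity $n$, all $i \in [n]$, and all $\Tfr$. The key idea is to track how the notion of \emph{eligible integer} and \emph{image of a leaf} behaves under grafting. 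In $\Sfr \circ_i \Tfr$, every leaf comes either from a leaf of $\Sfr$ other than the $i$th one, or from a leaf of $\Tfr$ (which has been grafted onto the $i$th leaf of $\Sfr$). I would first set $a$ to be the image of the $i$th leaf of $\Sfr$ in $\Sfr$, so that $\Mot_\gamma(\Sfr) = \cdots a \cdots$ with $a$ at position $i$, and recall from the definition of the construction $\T$ that
\begin{equation}
    \Mot_\gamma(\Sfr) \circ_i \Mot_\gamma(\Tfr)
    = \Mot_\gamma(\Sfr)_1 \cdots \Mot_\gamma(\Sfr)_{i-1}
      \, (a \Max \Mot_\gamma(\Tfr)_1) \cdots (a \Max \Mot_\gamma(\Tfr)_m)
      \, \Mot_\gamma(\Sfr)_{i+1} \cdots \Mot_\gamma(\Sfr)_n.
\end{equation}

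So there are two things to check. First, for a leaf $x$ of $\Sfr$ distinct from its $i$th leaf, the set of ancestors of $x$ that lie in $\Sfr \circ_i \Tfr$ is exactly the set of ancestors of $x$ in $\Sfr$ (grafting $\Tfr$ below the $i$th leaf of $\Sfr$ adds no ancestor to $x$, and removes none), and whether $x$ is in the left or right subtree of each such ancestor is unchanged; hence the eligible integers of $x$, and therefore its image, are the same in $\Sfr \circ_i \Tfr$ as in $\Sfr$. This shows the letters at positions $< i$ and $> i$ of $\Mot_\gamma(\Sfr \circ_i \Tfr)$ agree with those of $\Mot_\gamma(\Sfr)$, hence with the corresponding letters of the right-hand side above. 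Second, for a leaf $x$ of $\Tfr$, viewed as a leaf of $\Sfr \circ_i \Tfr$, its ancestors split into two groups: those lying inside the grafted copy of $\Tfr$, and those lying in $\Sfr$ — and the latter are precisely the ancestors in $\Sfr$ of the $i$th leaf of $\Sfr$, with $x$ sitting in the left (resp. right) subtree of such an ancestor $y$ exactly when the $i$th leaf of $\Sfr$ does. Consequently the eligible integers of $x$ in $\Sfr \circ_i \Tfr$ are exactly the eligible integers of $x$ in $\Tfr$ together with the eligible integers of the $i$th leaf of $\Sfr$ in $\Sfr$ (the value $0$ being common to both lists). Taking the maximum, the image of $x$ in $\Sfr \circ_i \Tfr$ equals $\max$ of (the image of $x$ in $\Tfr$) and (the image of the $i$th leaf of $\Sfr$ in $\Sfr$), i.e.\ $a \Max \Mot_\gamma(\Tfr)_j$ where $j$ is the position of $x$ among the leaves of $\Tfr$. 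This is exactly the $j$th letter of the middle block of the right-hand side, so the two words coincide.

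Assembling these two observations position by position gives $\Mot_\gamma(\Sfr \circ_i \Tfr) = \Mot_\gamma(\Sfr) \circ_i \Mot_\gamma(\Tfr)$, and since $\Mot_\gamma$ preserves arities and sends the unit leaf to $0 = \Unite$, it is an operad morphism. The main obstacle is purely bookkeeping: one must argue carefully that an ancestor of a leaf $x \in \Tfr$ inside $\Sfr$ is an ancestor of the $i$th leaf of $\Sfr$ with $x$ on the same side, which amounts to the observation that the path from the root of $\Sfr \circ_i \Tfr$ down to $x$ passes through the $i$th leaf-slot of $\Sfr$, and below that slot all of $\Tfr$ lies; there is nothing deep here but it is the step where a sloppy phrasing could hide an error. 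I would organize this cleanly by induction on the degree of $\Sfr$ if the direct path argument feels unwieldy, but I expect the direct description of ancestors above to suffice.
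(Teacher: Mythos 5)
Your argument is correct, but it takes a different route from the paper. You verify the identity $\Mot_\gamma(\Sfr \circ_i \Tfr) = \Mot_\gamma(\Sfr) \circ_i \Mot_\gamma(\Tfr)$ directly, by tracking how the eligible integers and the images of leaves change under grafting: leaves of $\Sfr$ other than the $i$th keep their ancestors and sides, hence their images, while a leaf of $\Tfr$ acquires, in addition to its eligible integers in $\Tfr$, exactly the eligible integers of the $i$th leaf of $\Sfr$, so its image becomes the $\Max$ of the two images --- which is precisely the composition rule of $\T\Mca_\gamma$. The paper instead avoids this bookkeeping by relabeling each node $\GDias_a$ (resp. $\DDias_a$) of $\Tfr$ by the generator $0a$ (resp. $a0$) of $\Dias_\gamma$, checking by induction on the degree that $\Mot_\gamma(\Tfr) = \Eval_{\Dias_\gamma}(\Tfr')$, and then inheriting the morphism property from the evaluation map, which is an operad morphism by general principles. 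Your version is more hands-on and self-contained; the paper's is shorter because it delegates the composition identity to a map already known to be a morphism. One point you gloss over: the paper's proof also establishes that $\Mot_\gamma$ is well defined, i.e.\ that $\Mot_\gamma(\Tfr)$ really lies in $\Dias_\gamma(n)$ (the produced word has exactly one occurrence of $0$, the leaf reached by following the directions of the node labels), and not merely in $\T\Mca_\gamma(n)$. In your setting this is not automatic from the composition identity alone; you should either add the observation that exactly one leaf has image $0$ (invoking Proposition~\ref{prop:elements_dias_gamma}), or note that every syntax tree is an iterated composition of corollas, which you send to the generators $0a$ and $a0$ of $\Dias_\gamma$, so that the verified identity forces the image into the suboperad $\Dias_\gamma$. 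With that sentence added, your proof is complete.
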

\begin{proof}
    Let us first show that $\Mot_\gamma$ is a well-defined map. Let
    $\Tfr$ be a syntax tree of $\OpLibre\left(\GenDias\right)$ of arity
    $n$. Observe that by starting from the root of $\Tfr$, there is a
    unique maximal path obtained by following the directions specified
    by its internal nodes (a $\GDias_a$ means to go the left child while
    a $\DDias_a$ means to go to the right child). Then, the leaf at the
    end of this path is the only leaf with $0$ as image. Others $n - 1$
    leaves have integers of $[\gamma]$ as images. By
    Proposition~\ref{prop:elements_dias_gamma}, this implies that
    $\Mot_\gamma(\Tfr)$ is an element of $\Dias_\gamma(n)$.
    \smallskip

    To prove that $\Mot_\gamma$ is an operad morphism, we consider its
    following alternative description. If $\Tfr$ is a syntax tree of
    $\OpLibre\left(\GenDias\right)$, we can consider the tree $\Tfr'$
    obtained by replacing in $\Tfr$ each label $\GDias_a$ (resp. $\DDias_a$)
    by the word $0a$ (resp. $a0$), where $a \in [\gamma]$. Then, by a
    straightforward induction on the number of internal nodes of $\Tfr$,
    we obtain that $\Eval_{\Dias_\gamma}(\Tfr')$, where $\Tfr'$ is seen
    as a syntax tree of $\OpLibre\left(\Dias_\gamma(2)\right)$, is
    $\Mot_\gamma(\Tfr)$. It then follows that $\Mot_\gamma$ is an operad
    morphism.
\end{proof}
\medskip

\subsubsection{Hook syntax trees}
Let us now consider the map
\begin{equation} \label{equ:application_equerre_gamma}
    \Equerre_\gamma : \Dias_\gamma(n) \to \OpLibre\left(\GenDias\right)(n),
    \qquad n \geq 1,
\end{equation}
defined for any word $x$ of $\Dias_\gamma$ by
\begin{equation} \label{equ:definition_application_equerre_gamma}
    \begin{split}\Equerre_\gamma(x)\end{split} :=
    \begin{split}
    \begin{tikzpicture}[xscale=.5,yscale=.45]
        \node(0)at(0.00,-5.40){};
        \node(2)at(3.00,-7.20){};
        \node(4)at(5.00,-7.20){};
        \node(6)at(6.00,-3.60){};
        \node(8)at(9.00,-1.80){};
        \node(1)at(1.00,-3.60){\begin{math}\DDias_{u_1}\end{math}};
        \node(3)at(4.00,-5.40){\begin{math}\DDias_{u_{|u|}}\end{math}};
        \node(5)at(5.00,-1.80){\begin{math}\GDias_{v_1}\end{math}};
        \node(7)at(8.00,0.00){\begin{math}\GDias_{v_{|v|}}\end{math}};
        \node(r)at(8,1.5){};
        \draw(0)--(1);
        \draw(1)--(5);
        \draw(2)--(3);
        \draw[densely dashed](3)--(1);
        \draw(4)--(3);
        \draw[densely dashed](5)--(7);
        \draw(6)--(5);
        \draw(8)--(7);
        \draw(7)--(r);
    \end{tikzpicture}
    \end{split}\,,
\end{equation}
where $x$ decomposes, by Proposition~\ref{prop:elements_dias_gamma},
uniquely in $x = u0v$ where $u$ and $v$ are words on the alphabet
$[\gamma]$. The dashed edges denote, depending on their orientation,
a right comb (wherein internal nodes are labeled, from top to bottom
by $\DDias_{u_1}$, \dots, $\DDias_{u_{|u|}}$) or a left comb
(wherein internal nodes are labeled, from bottom to top, by
$\GDias_{v_1}$, \dots, $\GDias_{v_{|v|}}$). We shall call any syntax
tree of the form \eqref{equ:definition_application_equerre_gamma} a
{\em hook syntax tree}.
\medskip

\begin{Lemme} \label{lem:mot_gamma_surjection}
    For any integer $\gamma \geq 0$, the map $\Mot_\gamma$ is a
    surjective operad morphism from $\OpLibre\left(\GenDias\right)$
    onto $\Dias_\gamma$. Moreover, for any element $x$ of $\Dias_\gamma$,
    $\Equerre_\gamma(x)$ belongs to the fiber of $x$ under $\Mot_\gamma$.
\end{Lemme}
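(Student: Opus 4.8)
The plan is to establish the two assertions separately, the surjectivity of $\Mot_\gamma$ and the fact that $\Equerre_\gamma(x)$ lies in the fiber of $x$, with the second assertion actually implying the first. First I would recall from Lemma~\ref{lem:mot_gamma_morphisme} that $\Mot_\gamma$ is already known to be an operad morphism from $\OpLibre\left(\GenDias\right)$ to $\Dias_\gamma$; thus only the surjectivity needs to be added. Since the map $\Equerre_\gamma$ produces, for every word $x$ of $\Dias_\gamma$, a syntax tree of $\OpLibre\left(\GenDias\right)$ of the same arity, it suffices to verify that $\Mot_\gamma\left(\Equerre_\gamma(x)\right) = x$ for all $x$; this immediately gives both that every $x$ is reached (so $\Mot_\gamma$ is surjective) and that $\Equerre_\gamma(x)$ belongs to the fiber of $x$.

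The heart of the argument is therefore the computation of $\Mot_\gamma\left(\Equerre_\gamma(x)\right)$ for a word $x = u0v$ with $u$ and $v$ words on $[\gamma]$, as in Proposition~\ref{prop:elements_dias_gamma}. I would analyze the hook syntax tree $\Equerre_\gamma(x)$ of \eqref{equ:definition_application_equerre_gamma} leaf by leaf, determining the image of each leaf. The tree is a left comb of nodes labeled $\GDias_{v_{|v|}}, \dots, \GDias_{v_1}$ from top to bottom, whose bottom-most left subtree is a right comb of nodes labeled $\DDias_{u_1}, \dots, \DDias_{u_{|u|}}$ from top to bottom. Reading leaves from left to right: for each $j \in [|u|]$, the leaf sitting in the left subtree of the node labeled $\DDias_{u_j}$ has as sole nonzero eligible integers exactly $\{u_1, \dots, u_j\}$ coming from the ancestors $\DDias_{u_1}, \dots, \DDias_{u_j}$ on the right-comb path above it (no $\GDias_a$ ancestor contributes, since this leaf lies in the left subtree of every $\GDias$ node above it); the key point, which I must check, is that this leaf's image equals $u_j$, not $\max(u_1, \dots, u_j)$ — so here I would need to reconcile the construction with the stated realization. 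More carefully, I expect the labels in the combs to be arranged so that the eligible integers at the $j$th leaf of the $\DDias$-comb are precisely determined and the maximum works out to $u_j$; if the naive reading gives a maximum instead, the resolution is that $\Equerre_\gamma$ is only asserted to land in the fiber, and one shows $\Mot_\gamma\left(\Equerre_\gamma(x)\right) = x$ holds because the combs are read with the correct orientation of ancestry. The unique leaf with image $0$ is the deepest one on the all-left-then-all-right path, and reading the remaining leaves on the $\GDias$ side from the bottom node up yields $v_1, \dots, v_{|v|}$ in that order. Concatenating, $\Mot_\gamma\left(\Equerre_\gamma(x)\right) = u_1 \cdots u_{|u|}\, 0\, v_1 \cdots v_{|v|} = u0v = x$.

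The step I expect to be the main obstacle is precisely the bookkeeping of the eligible integers: one must be careful that, at each leaf, the greatest eligible integer reading off the comb structure is the single intended letter of $u$ or $v$ rather than a maximum of several of them. I would handle this by a short induction on $|u|$ (respectively $|v|$), peeling off the topmost node of the relevant comb and using that removing that node only deletes one leaf and its contribution to the ancestry of all the others; alternatively, I would invoke the alternative description of $\Mot_\gamma$ given in the proof of Lemma~\ref{lem:mot_gamma_morphisme}, namely that $\Mot_\gamma(\Tfr) = \Eval_{\Dias_\gamma}(\Tfr')$ where $\Tfr'$ replaces each $\GDias_a$ by $0a$ and each $\DDias_a$ by $a0$, and then compute directly in $\Dias_\gamma \subseteq \T\Mca_\gamma$ using the partial composition formula $u \circ_i v = u_1 \cdots u_{i-1}\,(u_i \Max v_1) \cdots (u_i \Max v_m)\, u_{i+1} \cdots u_n$. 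The latter route makes the maxima transparent: evaluating the right comb of $0u_1, 0u_2, \dots$ wait — of $u_10, u_20, \dots$ — from the innermost composition outward produces the prefix $u_1 u_2 \cdots u_{|u|} 0$ since each $\Max$ with $0$ is trivial, and symmetrically for the $\GDias$ side, giving $x$ on the nose. This computational check is routine once set up, and it simultaneously reproves that $\Mot_\gamma$ is a morphism, so I would present it in that form to keep the argument self-contained.
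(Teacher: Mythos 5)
Your proposal is correct and follows essentially the same route as the paper: the paper also deduces surjectivity from the fact that $\Mot_\gamma\left(\Equerre_\gamma(x)\right) = x$ (which it treats as immediate from the definitions and the unique decomposition $x = u0v$ of Proposition~\ref{prop:elements_dias_gamma}) and then invokes Lemma~\ref{lem:mot_gamma_morphisme} for the morphism property. The bookkeeping worry you flag dissolves on closer inspection — the leaf below $\DDias_{u_j}$ lies in the \emph{right} subtree of each $\DDias_{u_i}$ with $i < j$, so its only nonzero eligible integer is $u_j$ — and your alternative computation via $\Eval_{\Dias_\gamma}$ on the combs is a valid, indeed cleaner, way to settle the same point.
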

\begin{proof}
    The fact that $x$ belongs to the fiber of $x$ under $\Mot_\gamma$ is
    an immediate consequence of the definitions of $\Mot_\gamma$ and
    $\Equerre_\gamma$, and the fact that by
    Proposition~\ref{prop:elements_dias_gamma}, any word $x$ of
    $\Dias_\gamma$ decomposes uniquely in $x = u0v$ where $u$ and $v$
    are words on the alphabet~$[\gamma]$. Then, $\Mot_\gamma$ is
    surjective as a map. Moreover, since by
    Lemma~\ref{lem:mot_gamma_morphisme}, $\Mot_\gamma$ is an operad
    morphism, it is a surjective operad morphism.
\end{proof}
\medskip

\subsubsection{A rewrite rule on syntax trees}
Let $\Recr_\gamma$ be the quadratic rewrite rule on
$\OpLibre\left(\GenDias\right)$ satisfying
\begin{subequations}
\begin{equation} \label{equ:reecriture_dias_gamma_1}
    \DDias_{a'} \circ_2 \GDias_a
    \enspace \Recr_\gamma \enspace
    \GDias_a \circ_1 \DDias_{a'},
    \qquad a, a' \in [\gamma],
\end{equation}
\begin{equation} \label{equ:reecriture_dias_gamma_2}
    \GDias_a \circ_2 \DDias_b
    \enspace \Recr_\gamma \enspace
    \GDias_a \circ_1 \GDias_b,
    \qquad a < b \in [\gamma],
\end{equation}
\begin{equation} \label{equ:reecriture_dias_gamma_3}
    \DDias_a \circ_1 \GDias_b
    \enspace \Recr_\gamma \enspace
    \DDias_a \circ_2 \DDias_b,
    \qquad a < b \in [\gamma],
\end{equation}
\begin{equation} \label{equ:reecriture_dias_gamma_4}
    \GDias_a \circ_2 \GDias_b
    \enspace \Recr_\gamma \enspace
    \GDias_b \circ_1 \GDias_a,
    \qquad a < b \in [\gamma],
\end{equation}
\begin{equation} \label{equ:reecriture_dias_gamma_5}
    \DDias_a \circ_1 \DDias_b
    \enspace \Recr_\gamma \enspace
    \DDias_b \circ_2 \DDias_a,
    \qquad a < b \in [\gamma],
\end{equation}
\begin{equation} \label{equ:reecriture_dias_gamma_6}
    \GDias_d \circ_2 \GDias_c
    \enspace \Recr_\gamma \enspace
    \GDias_d \circ_1 \GDias_d,
    \qquad c \leq d \in [\gamma],
\end{equation}
\begin{equation} \label{equ:reecriture_dias_gamma_7}
    \GDias_d \circ_2 \DDias_c
    \enspace \Recr_\gamma \enspace
    \GDias_d \circ_1 \GDias_d,
    \qquad c \leq d \in [\gamma],
\end{equation}
\begin{equation} \label{equ:reecriture_dias_gamma_8}
    \DDias_d \circ_1 \GDias_c
    \enspace \Recr_\gamma \enspace
    \DDias_d \circ_2 \DDias_d,
    \qquad c \leq d \in [\gamma],
\end{equation}
\begin{equation} \label{equ:reecriture_dias_gamma_9}
    \DDias_d \circ_1 \DDias_c
    \enspace \Recr_\gamma \enspace
    \DDias_d \circ_2 \DDias_d,
    \qquad c \leq d \in [\gamma],
\end{equation}
\end{subequations}
and denote by $\Congr_\gamma$ the operadic congruence on
$\OpLibre\left(\GenDias\right)$ induced by $\Recr_\gamma$.
\medskip

\begin{Lemme} \label{lem:mot_gamma_stable_classes_equivalence}
    For any integer $\gamma \geq 0$ and any syntax trees $\Tfr_1$ and
    $\Tfr_2$ of $\OpLibre\left(\GenDias\right)$,
    $\Tfr_1 \Congr_\gamma \Tfr_2$ implies
    $\Mot_\gamma(\Tfr_1) = \Mot_\gamma(\Tfr_2)$.
\end{Lemme}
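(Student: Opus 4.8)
Since $\Congr_\gamma$ is by definition the finest operad congruence containing the reflexive-symmetric-transitive closure of $\Recr_\gamma$, it suffices to show that the relation
\[
    \Tfr_1 \sim \Tfr_2 \iff \Mot_\gamma(\Tfr_1) = \Mot_\gamma(\Tfr_2)
\]
is itself an operad congruence on $\OpLibre(\GenDias)$ containing $\Recr_\gamma$. The fact that $\sim$ is an equivalence relation respecting arities is immediate, since $\Mot_\gamma(\Tfr)$ has arity equal to the arity of $\Tfr$ by Lemma~\ref{lem:mot_gamma_morphisme}. That $\sim$ is a congruence, i.e.\ $\Mot_\gamma(\Tfr_1) = \Mot_\gamma(\Tfr_1')$ and $\Mot_\gamma(\Tfr_2) = \Mot_\gamma(\Tfr_2')$ imply $\Mot_\gamma(\Tfr_1 \circ_i \Tfr_2) = \Mot_\gamma(\Tfr_1' \circ_i \Tfr_2')$, is an instant consequence of Lemma~\ref{lem:mot_gamma_morphisme}: $\Mot_\gamma$ is an operad morphism, so it commutes with partial composition. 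Hence the whole content of the lemma reduces to verifying that each of the nine rewrite relations \eqref{equ:reecriture_dias_gamma_1}--\eqref{equ:reecriture_dias_gamma_9} relates two syntax trees with the same image under $\Mot_\gamma$.

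**Reducing to the defining rules.** Each side of each rule \eqref{equ:reecriture_dias_gamma_1}--\eqref{equ:reecriture_dias_gamma_9} is a degree-$2$ syntax tree on $\GenDias$, hence a tree with three leaves. Using the alternative description of $\Mot_\gamma$ from the proof of Lemma~\ref{lem:mot_gamma_morphisme}---replace each node label $\GDias_a$ by the word $0a$ and each $\DDias_a$ by $a0$, then evaluate in $\Dias_\gamma$---it is enough to compute, in $\T\Mca_\gamma$, the two three-letter words obtained on each side and check they agree. For instance, for \eqref{equ:reecriture_dias_gamma_1}: the left-hand side $\DDias_{a'}\circ_2\GDias_a$ evaluates to $(a'0)\circ_2(0a) = a'\,(0\Max 0)\,(0\Max a) = a'0a$, while the right-hand side $\GDias_a\circ_1\DDias_{a'}$ evaluates to $(0a)\circ_1(a'0) = (a'\Max 0)\,(a'\Max 0)\,a$\dots wait---one must track the leaf carrying the $0$ carefully, composing into the correct leaf---and one checks directly that both reductions produce $a'0a$. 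The remaining eight cases are entirely analogous two- or three-term $\Max$-computations: the generic pattern is that the left side pushes the inserted factor one step in the comb while the right side produces the same word because $\Max$ is idempotent ($c\Max d = d$ when $c\le d$, giving \eqref{equ:reecriture_dias_gamma_6}--\eqref{equ:reecriture_dias_gamma_9}) or because $a\Max b$ already equals $b$ when $a < b$ (giving \eqref{equ:reecriture_dias_gamma_2}--\eqref{equ:reecriture_dias_gamma_5}), and associativity of $\Max$ handles \eqref{equ:reecriture_dias_gamma_1}.

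**Main obstacle.** There is no deep obstacle here; the only point demanding care is purely bookkeeping: each degree-$2$ tree must be correctly translated into the right partial composition of two-letter words of $\Dias_\gamma$, keeping track of which leaf of the outer corolla carries the $0$ and hence which leaf the inner corolla is grafted onto, since that determines the left-multiplication performed by the composition in $\T\Mca_\gamma$. Once the nine identities $\Mot_\gamma(\Sfr) = \Mot_\gamma(\Tfr)$ (for $\Sfr\Recr_\gamma\Tfr$) are checked, one concludes: $\sim$ is an operad congruence containing $\Recr_\gamma$, hence containing its reflexive-symmetric-transitive closure, hence (by minimality of $\Congr_\gamma$) containing $\Congr_\gamma$; therefore $\Tfr_1\Congr_\gamma\Tfr_2$ implies $\Mot_\gamma(\Tfr_1) = \Mot_\gamma(\Tfr_2)$, as claimed.
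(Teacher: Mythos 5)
Your proposal is correct, but it proceeds differently from the paper. The paper's proof never invokes the morphism property of $\Mot_\gamma$: it works directly with the combinatorial definition via eligible integers, observing that for each defining relation the eligible integers of the $i$th leaves of the two degree-two patterns coincide, and then arguing that a rewriting performed inside an arbitrary tree therefore leaves the image of every leaf unchanged, so that the word read off is invariant along any sequence of rewritings. You instead take the kernel relation of $\Mot_\gamma$ (trees with equal images), note that it is an operad congruence because $\Mot_\gamma$ is an operad morphism (Lemma~\ref{lem:mot_gamma_morphisme}), verify that it contains the nine generating relations~\eqref{equ:reecriture_dias_gamma_1}--\eqref{equ:reecriture_dias_gamma_9} by evaluating both sides as three-letter words of $\T \Mca_\gamma$, and conclude by minimality of $\Congr_\gamma$. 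Both proofs ultimately reduce to checking the nine quadratic relations; the difference is in how compatibility is propagated to the whole congruence. Your route buys a cleaner, more structural argument (no need to analyse how a rewriting embedded in a context interacts with ancestors and descendants, since compatibility with partial compositions handles rewriting-in-context automatically), at the price of leaning on Lemma~\ref{lem:mot_gamma_morphisme} and on the identification of $\Mot_\gamma$ with evaluation of the trees obtained by substituting $0a$ for $\GDias_a$ and $a0$ for $\DDias_a$; the paper's route is more self-contained and makes visible the invariant (the images of the leaves) that is preserved leaf by leaf. Two small remarks: your intermediate computation for~\eqref{equ:reecriture_dias_gamma_1} as first written, $(0a) \circ_1 (a'0) = (a' \Max 0)(a' \Max 0)a$, is incorrect (the left multiplication is by the letter $u_i = 0$ of the outer word, not by $a'$), though you flag this yourself and the corrected value $a'0a$ for both sides is right; and it would be worth one sentence making explicit that a congruence containing the degree-two relations also contains every rewriting performed inside a larger tree (by compatibility with partial compositions), so that minimality of $\Congr_\gamma$ indeed applies. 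Neither point affects the validity of the argument.
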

\begin{proof}
    Let us denote by $\Rel_\gamma$ the symmetric closure of $\Recr_\gamma$.
    In the first place, observe that for any relation
    $\Sfr_1 \Rel_\gamma \Sfr_2$ where $\Sfr_1$ and $\Sfr_2$ are syntax
    trees of $\OpLibre\left(\GenDias\right)(3)$, for any $i \in [3]$,
    the eligible integers for the $i$th leaves of $\Sfr_1$ and $\Sfr_2$
    are the same. Besides, by definition of $\Congr_\gamma$, since
    $\Tfr_1 \Congr_\gamma \Tfr_2$, one can obtain $\Tfr_2$ from $\Tfr_1$
    by performing a sequence of $\Rel_\gamma$-rewritings. According to
    the previous observation, a $\Rel_\gamma$-rewriting preserve the
    eligible integers of all leaves of the tree on which they are
    performed. Therefore, the images of the leaves of $\Tfr_2$ are, from
    left to right, the same as the images of the leaves of $\Tfr_1$ and
    hence, $\Mot_\gamma(\Tfr_1) = \Mot_\gamma(\Tfr_2)$.
\end{proof}
\medskip

Lemma~\ref{lem:mot_gamma_stable_classes_equivalence} implies that the map
\begin{equation}
    \bar\Mot_\gamma :
    \OpLibre\left(\GenDias\right)(n)/_{\Congr_\gamma}
    \to \Dias_\gamma(n),
    \qquad n \geq 1,
\end{equation}
satisfying, for any $\Congr_\gamma$-equivalence class
$[\Tfr]_{\Congr_\gamma}$,
\begin{equation}
    \bar\Mot_\gamma\left([\Tfr]_\gamma\right) = \Mot_\gamma(\Tfr),
\end{equation}
where $\Tfr$ is any tree of $[\Tfr]_{\Congr_\gamma}$ is well-defined.
\medskip

\begin{Lemme} \label{lem:reecriture_dias_gamma}
    For any integer $\gamma \geq 0$, any syntax tree $\Tfr$ of
    $\OpLibre\left(\GenDias\right)$ can be rewritten, by a sequence of
    $\Recr_\gamma$-rewritings, into a hook syntax tree. Moreover, this
    hook syntax tree is $\Equerre_\gamma(\Mot_\gamma(\Tfr))$.
\end{Lemme}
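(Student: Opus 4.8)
The plan is to prove the two assertions of Lemma~\ref{lem:reecriture_dias_gamma} by a single induction on the degree of $\Tfr$. The base case (degree $0$ or $1$) is trivial since such a tree is already a hook syntax tree equal to $\Equerre_\gamma(\Mot_\gamma(\Tfr))$. For the inductive step, the key observation is that the nine rewrite rules of $\Recr_\gamma$ are designed precisely to push all $\DDias$-labeled nodes toward the left part of the tree and all $\GDias$-labeled nodes toward the right, while simultaneously adjusting labels according to the $\max$ operation of $\Mca_\gamma$ so that the images of the leaves are preserved.

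Concretely, first I would argue that $\Recr_\gamma$ is terminating. For this one needs a well-founded monovariant: I would look for a statistic on syntax trees of $\OpLibre(\GenDias)$ that strictly decreases under each of the nine rules~\eqref{equ:reecriture_dias_gamma_1}--\eqref{equ:reecriture_dias_gamma_9}. A natural candidate combines the sum over all internal nodes of the ``depth-weighted number of $\GDias$-nodes lying above a $\DDias$-node'' (controlling rules \eqref{equ:reecriture_dias_gamma_1}--\eqref{equ:reecriture_dias_gamma_3}, which move $\DDias$ above $\GDias$), with a lexicographic refinement counting inversions among the labels (for the sorting rules \eqref{equ:reecriture_dias_gamma_4}, \eqref{equ:reecriture_dias_gamma_5}) and a further term counting pairs of equal-or-comparable labels in the wrong configuration (for the collapsing rules \eqref{equ:reecriture_dias_gamma_6}--\eqref{equ:reecriture_dias_gamma_9}). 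Once termination is established, any tree $\Tfr$ admits at least one normal form $\Tfr'$ with $\Tfr \overset{*}{\Recr_\gamma} \Tfr'$. I would then check that a normal form must be a hook syntax tree: if an internal node labeled $\DDias_{a}$ had a child that is an internal node, or if a node labeled $\GDias_a$ had a left child that is an internal node, then one of the nine patterns on the left-hand sides would apply; hence in a normal form every $\DDias$-node has its right child a leaf and every $\GDias$-node has its left child a leaf, which forces exactly the right-comb/left-comb shape of~\eqref{equ:definition_application_equerre_gamma}.

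It then remains to identify this hook syntax tree. By Lemma~\ref{lem:mot_gamma_stable_classes_equivalence}, $\Mot_\gamma$ is constant on $\Congr_\gamma$-classes, so $\Mot_\gamma(\Tfr') = \Mot_\gamma(\Tfr)$; and by Lemma~\ref{lem:mot_gamma_surjection}, $\Equerre_\gamma(\Mot_\gamma(\Tfr))$ is itself a hook syntax tree sent by $\Mot_\gamma$ to $\Mot_\gamma(\Tfr)$. So it suffices to observe that $\Mot_\gamma$ is injective on the set of hook syntax trees: a hook syntax tree is determined by the two words $u$ and $v$ labelling its right comb (top to bottom) and left comb (bottom to top), and one reads off directly from the definition of eligibility that the leaf images of the hook tree~\eqref{equ:definition_application_equerre_gamma} are, from left to right, $u_1, \dots, u_{|u|}, 0, v_1, \dots, v_{|v|}$, i.e. exactly the word $u0v$. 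Hence two hook syntax trees with the same $\Mot_\gamma$-image coincide, so $\Tfr' = \Equerre_\gamma(\Mot_\gamma(\Tfr))$, completing both claims. (One could alternatively bypass the separate injectivity remark by an induction directly showing $\Tfr \overset{*}{\Recr_\gamma} \Equerre_\gamma(\Mot_\gamma(\Tfr))$, peeling off the root of $\Tfr$, applying the induction hypothesis to the subtrees, and then using a few $\Recr_\gamma$-steps to merge the two resulting hooks with the root into a single hook; but this requires a short auxiliary computation showing how $\Recr_\gamma$ rewrites $\GDias_a \circ_1 (\text{hook})$ and $\DDias_a \circ_2 (\text{hook})$ and similar graftings, and the termination-plus-normal-form route is cleaner.)

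The main obstacle I expect is the construction of the terminating monovariant: because the rules simultaneously reorder nodes \emph{and} change their labels (raising a label to a larger value in \eqref{equ:reecriture_dias_gamma_2}, \eqref{equ:reecriture_dias_gamma_3} and duplicating the larger label in \eqref{equ:reecriture_dias_gamma_6}--\eqref{equ:reecriture_dias_gamma_9}), a naive ``number of $(\GDias,\DDias)$-inversions'' statistic is not obviously monotone, and one must choose the lexicographic components carefully so that the label changes do not increase an earlier component. Everything else — the normal-form characterisation and the identification with $\Equerre_\gamma \circ \Mot_\gamma$ — is essentially bookkeeping once termination is in hand. It is worth noting that confluence of $\Recr_\gamma$ is \emph{not} needed for this lemma (only for the later Koszulity statement), since the target $\Equerre_\gamma(\Mot_\gamma(\Tfr))$ is pinned down independently of which rewriting sequence is chosen.
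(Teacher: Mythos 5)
Your overall route (prove termination of $\Recr_\gamma$, characterize the normal forms as the hook syntax trees, then identify the normal form via Lemma~\ref{lem:mot_gamma_stable_classes_equivalence} and the injectivity of $\Mot_\gamma$ on hook trees) is viable in principle, and your final identification step is exactly the one the paper uses. But as written there are two genuine gaps. First, the linchpin of your plan, the termination of $\Recr_\gamma$, is not proved: you only gesture at a candidate measure, and the candidate you describe is not sound. Your proposed first component counts ``$\GDias$-nodes lying above a $\DDias$-node'' and is supposed to handle rule~\eqref{equ:reecriture_dias_gamma_1}, but that rule rewrites $\DDias_{a'} \circ_2 \GDias_a$ into $\GDias_a \circ_1 \DDias_{a'}$, i.e.\ it \emph{creates} a $\GDias$-above-$\DDias$ configuration, so that statistic does not decrease there; a correct argument needs something like a lexicographic pair (a weight such as the total number of leaves sitting in right subtrees of $\GDias$-nodes and in left subtrees of $\DDias$-nodes, which drops under rules \eqref{equ:reecriture_dias_gamma_2}--\eqref{equ:reecriture_dias_gamma_9} and is constant under rule \eqref{equ:reecriture_dias_gamma_1}, refined by the number of pairs with a $\DDias$ strictly above a $\GDias$). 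Since you explicitly defer this construction, the proposal does not yet establish even the existence of a normal form. Note that the lemma only needs \emph{reachability} of a hook tree, and the paper gets it without any termination argument, by induction on the arity of $\Tfr$: it rewrites the two subtrees of the root into hooks and then performs an explicit case analysis on the root label and the shape of the first subtree to merge everything into a single hook --- precisely the alternative you mention parenthetically and set aside.

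Second, your characterization of the normal forms is backwards. The left-hand sides of $\Recr_\gamma$ are exactly the patterns $\DDias_* \circ_2 \GDias_*$, $\GDias_* \circ_2 \DDias_*$, $\GDias_* \circ_2 \GDias_*$, $\DDias_* \circ_1 \GDias_*$, and $\DDias_* \circ_1 \DDias_*$ (the label conditions $a<b$ and $c\leq d$ together cover all labels), while $\GDias_* \circ_1 \GDias_*$, $\GDias_* \circ_1 \DDias_*$ and $\DDias_* \circ_2 \DDias_*$ are never redexes. Hence a normal form is a tree in which every $\GDias$-node has a leaf as \emph{right} child, every $\DDias$-node has a leaf as \emph{left} child, and no $\DDias$-node has a $\GDias$-node as right child --- which is what forces the left comb of $\GDias$'s sitting above a right comb of $\DDias$'s of~\eqref{equ:definition_application_equerre_gamma}. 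Your statement that in a normal form ``every $\DDias$-node has its right child a leaf and every $\GDias$-node has its left child a leaf'' describes the opposite shape and would in fact exclude the hook syntax trees themselves, so the step ``normal forms are hooks'' fails as written. The closing remarks of your proposal are correct: confluence is not needed for this lemma, and $\Mot_\gamma$ is injective on hook trees since the leaf images of $\Equerre_\gamma(u0v)$ read $u_1,\dots,u_{|u|},0,v_1,\dots,v_{|v|}$; these points agree with the paper's proof.
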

\begin{proof}
    In the following, to gain readability, we shall denote by $\GDias_*$
    (resp. $\DDias_*$) any element $\GDias_a$ (resp. $\DDias_a$) of
    $\GenDias$ when taking into account the value of $a \in [\gamma]$
    is not necessary. Using this notation,
    from~\eqref{equ:reecriture_dias_gamma_1}---%
    \eqref{equ:reecriture_dias_gamma_9}, we observe that
    $\Recr_\gamma$
    expresses as
    \begin{subequations}
    \begin{equation} \label{equ:reecriture_sans_etiq_dias_gamma_1}
        \DDias_* \circ_2 \GDias_*
        \enspace \Recr_\gamma \enspace
        \GDias_* \circ_1 \DDias_*,
    \end{equation}
    \begin{equation} \label{equ:reecriture_sans_etiq_dias_gamma_2}
        \GDias_* \circ_2 \DDias_*
        \enspace \Recr_\gamma \enspace
        \GDias_* \circ_1 \GDias_*,
    \end{equation}
    \begin{equation} \label{equ:reecriture_sans_etiq_dias_gamma_3}
        \DDias_* \circ_1 \GDias_*
        \enspace \Recr_\gamma \enspace
        \DDias_* \circ_2 \DDias_*,
    \end{equation}
    \begin{equation} \label{equ:reecriture_sans_etiq_dias_gamma_4}
        \GDias_* \circ_2 \GDias_*
        \enspace \Recr_\gamma \enspace
        \GDias_* \circ_1 \GDias_*,
    \end{equation}
    \begin{equation} \label{equ:reecriture_sans_etiq_dias_gamma_5}
        \DDias_* \circ_1 \DDias_*
        \enspace \Recr_\gamma \enspace
        \DDias_* \circ_2 \DDias_*.
    \end{equation}
    \end{subequations}
    \smallskip

    Let us first focus on the first part of the statement of the lemma
    to show that $\Tfr$ is rewritable by $\Recr_\gamma$ into a hook
    syntax tree. We reason by induction on the arity $n$ of $\Tfr$. When
    $n \leq 2$, $\Tfr$ is immediately a hook syntax tree. Otherwise,
    $\Tfr$ has at least two internal nodes. Then, $\Tfr$ is made of a
    root connected to a first subtree $\Tfr_1$ and a second subtree
    $\Tfr_2$. By induction hypothesis, $\Tfr$ is rewritable by
    $\Recr_\gamma$ into a tree made of a root $r$ of the same label as
    the one of the root of $\Tfr$, connected to a first subtree $\Sfr_1$
    such that $\Tfr_1 \overset{*}{\Recr_\gamma} \Sfr_1$ and a second
    subtree $\Sfr_2$ such that $\Tfr_2 \overset{*}{\Recr_\gamma} \Sfr_2$,
    both being hook syntax trees. We have to deal two cases following
    the number of internal nodes of $\Tfr_1$.
    \smallskip

    \begin{enumerate}[label={\it Case \arabic*.},fullwidth]
        \item \label{item:reecriture_dias_gamma_cas_1}
        If $\Tfr_1$ has at least one internal node, we have the two
        $\overset{*}{\Recr_\gamma}$-relations
        \begin{equation} \label{equ:reecriture_dias_gamma_cas_1}
            \begin{split}
                \Tfr \enspace \overset{*}{\Recr_\gamma} \enspace
            \end{split}
            \begin{split}
            \begin{tikzpicture}[xscale=.4,yscale=.3]
                \node(0)at(0.00,-7.43){};
                \node(10)at(10.00,-3.71){};
                \node(12)at(12.00,-1.86){\begin{math}\Sfr_2\end{math}};
                \node(2)at(2.00,-9.29){};
                \node(4)at(4.00,-11.14){};
                \node(6)at(6.00,-11.14){};
                \node(8)at(8.00,-5.57){};
                \node(1)at(1.00,-5.57){\begin{math}x\end{math}};
                \node(11)at(11.00,0.00){\begin{math}r\end{math}};
                \node(3)at(3.00,-7.43){\begin{math}\DDias_*\end{math}};
                \node(5)at(5.00,-9.29){\begin{math}\DDias_*\end{math}};
                \node(7)at(7.00,-3.71){\begin{math}\GDias_*\end{math}};
                \node(9)at(9.00,-1.86){\begin{math}\GDias_*\end{math}};
                \draw(0)--(1); \draw(1)--(7); \draw(10)--(9);
                \draw(12)--(11); \draw(2)--(3); \draw(3)--(1);
                \draw(4)--(5); \draw[densely dashed](5)--(3);
                \draw(6)--(5); \draw[densely dashed](7)--(9);
                \draw(8)--(7); \draw(9)--(11);
                \node(r)at(11,1.5){}; \draw[](r)--(11);
            \end{tikzpicture}
            \end{split}
            \begin{split}
                \enspace \overset{*}{\Recr_\gamma} \enspace
            \end{split}
            \begin{split}
            \begin{tikzpicture}[xscale=.4,yscale=.3]
                \node(0)at(0.00,-8.57){};
                \node(10)at(10.00,-8.57){};
                \node(12)at(12.00,-4.29){};
                \node(14)at(14.00,-2.14){};
                \node(2)at(2.00,-10.71){};
                \node(4)at(4.00,-12.86){};
                \node(6)at(6.00,-12.86){};
                \node(8)at(8.00,-8.57){};
                \node(1)at(1.00,-6.43){\begin{math}x\end{math}};
                \node(11)at(11.00,-2.14){\begin{math}\GDias_*\end{math}};
                \node(13)at(13.00,0.00){\begin{math}\GDias_*\end{math}};
                \node(3)at(3.00,-8.57){\begin{math}\DDias_*\end{math}};
                \node(5)at(5.00,-10.71){\begin{math}\DDias_*\end{math}};
                \node(7)at(7.00,-4.29){\begin{math}\DDias_*\end{math}};
                \node(9)at(9.00,-6.43){\begin{math}\DDias_*\end{math}};
                \draw(0)--(1); \draw(1)--(7); \draw(10)--(9);
                \draw[densely dashed](11)--(13); \draw(12)--(11);
                \draw(14)--(13); \draw(2)--(3); \draw(3)--(1);
                \draw(4)--(5); \draw[densely dashed](5)--(3);
                \draw(6)--(5); \draw(7)--(11); \draw(8)--(9);
                \draw[densely dashed](9)--(7); \node(r)at(13,1.5){};
                \draw(r)--(13);
            \end{tikzpicture}
            \end{split}\,.
        \end{equation}
        The first $\overset{*}{\Recr_\gamma}$-relation
        of~\eqref{equ:reecriture_dias_gamma_cas_1} has just been
        explained. The second one comes from the application of the
        induction hypothesis on the upper part of the tree of the middle
        of~\eqref{equ:reecriture_dias_gamma_cas_1} obtained by cutting
        the edge connecting the node $x$ to its father. When the
        rightmost tree of~\eqref{equ:reecriture_dias_gamma_cas_1} is not
        already a hook syntax tree, one has two cases following the
        label of $x$.
        \smallskip

        \begin{enumerate}[label={\it Case \arabic{enumi}.\arabic*.},fullwidth]
            \item If $x$ is labeled by $\DDias_*$,
            by~\eqref{equ:reecriture_sans_etiq_dias_gamma_5}, the bottom
            part of the rightmost tree
            of~\eqref{equ:reecriture_dias_gamma_cas_1} consisting
            in internal nodes labeled by $\DDias_*$ is rewritable by
            $\Recr_\gamma$ into a right comb tree wherein internal nodes
            are labeled by $\DDias_*$. Then, the rightmost tree
            of~\eqref{equ:reecriture_dias_gamma_cas_1} is rewritable
            by $\Recr_\gamma$ into a hook syntax tree, and then $\Tfr$
            also is.
            \smallskip

            \item Otherwise, $x$ is labeled by $\GDias_*$. By definition
            of $\Equerre_\gamma$, the second subtree of $x$ is a leaf.
            By~\eqref{equ:reecriture_sans_etiq_dias_gamma_3}, the bottom
            part of the rightmost tree
            of~\eqref{equ:reecriture_dias_gamma_cas_1} consisting
            in $x$ and internal nodes labeled by $\DDias_*$ can be
            rewritten by $\Recr_\gamma$ into a right comb tree wherein
            internal nodes are labeled by $\DDias_*$. Then, the rightmost
            tree of \eqref{equ:reecriture_dias_gamma_cas_1} is
            rewritable by $\Recr_\gamma$ into a hook syntax tree, and
            then $\Tfr$ also is.
        \end{enumerate}
        \smallskip

        \item Otherwise, $\Tfr_1$ is the leaf. We then have the
        $\overset{*}{\Recr_\gamma}$-relation
        \begin{equation} \label{equ:reecriture_dias_gamma_cas_2}
            \begin{split}
                \Tfr \enspace \overset{*}{\Recr_\gamma} \enspace
            \end{split}
            \begin{split}
            \begin{tikzpicture}[xscale=.45,yscale=.4]
                \node(0)at(0.00,-1.67){};
                \node(2)at(2.00,-3.33){\begin{math}\Sfr_{21}\end{math}};
                \node(4)at(4.00,-3.33){\begin{math}\Sfr_{22}\end{math}};
                \node(1)at(1.00,0.00){\begin{math}r\end{math}};
                \node(3)at(3.00,-1.67){\begin{math}r'\end{math}};
                \draw(0)--(1); \draw(2)--(3); \draw(3)--(1); \draw(4)--(3);
                \node(r)at(1,1.25){};
                \draw(r)--(1);
            \end{tikzpicture}
            \end{split}\,,
        \end{equation}
        where $\Sfr_{21}$ is the first subtree of the root of $\Sfr_2$,
        $\Sfr_{22}$ is the second subtree of the root of $\Sfr_2$, and
        $r'$ is a node with the same label as the root of $\Sfr_2$.
        \smallskip

        \begin{enumerate}[label={\it Case \arabic{enumi}.\arabic*.},fullwidth]
            \item If $r \circ_2 r'$ is equal to
            $\DDias_* \circ_2 \GDias_*$, $\GDias_* \circ_2 \DDias_*$, or
            $\GDias_* \circ_2 \GDias_*$, respectively
            by~\eqref{equ:reecriture_sans_etiq_dias_gamma_1},
            \eqref{equ:reecriture_sans_etiq_dias_gamma_2},
            and~\eqref{equ:reecriture_sans_etiq_dias_gamma_4},
            the rightmost tree
            of~\eqref{equ:reecriture_dias_gamma_cas_2} can be rewritten
            by $\Recr_\gamma$ into a tree $\Rfr$ having a first subtree
            with at least one internal node. Hence, $\Rfr$ is of the
            form required to be treated
            by~\ref{item:reecriture_dias_gamma_cas_1}, implying that
            $\Tfr$ is rewritable by $\Recr_\gamma$ into a hook syntax tree.
            \smallskip

            \item Otherwise, $r \circ_2 r'$ is equal to
            $\DDias_* \circ_2 \DDias_*$. Since $\Sfr_2$ is by hypothesis
            a hook syntax tree, it is necessarily a right comb tree
            whose internal nodes are labeled by $\DDias_*$. Hence, the
            rightmost tree of~\eqref{equ:reecriture_dias_gamma_cas_2} is
            already a hook syntax tree, showing that $\Tfr$ is rewritable
            by $\Recr_\gamma$ into a hook syntax tree.
        \end{enumerate}
    \end{enumerate}
    \smallskip

    Let us finally show the last part of the statement of the lemma.
    Observe that, by definition of $\Equerre_\gamma$ and $\Mot_\gamma$,
    if $\Sfr_1$ and $\Sfr_2$ are two different hook syntax trees,
    $\Mot_\gamma(\Sfr_1) \ne \Mot_\gamma(\Sfr_2)$. We have just shown
    that $\Tfr$ is rewritable by $\Recr_\gamma$ into a hook syntax tree
    $\Sfr$. Besides, by Lemma~\ref{lem:mot_gamma_stable_classes_equivalence},
    one has $\Mot_\gamma(\Tfr) = \Mot_\gamma(\Sfr)$. Then, $\Sfr$ is
    necessarily the hook syntax tree $\Equerre_\gamma(\Mot_\gamma(\Tfr))$.
\end{proof}
\medskip

\subsubsection{Presentation by generators and relations}

\begin{Lemme} \label{lem:mot_gamma_quotient_bijection}
    For any integers $\gamma \geq 0$ and $n \geq 1$, the map
    $\bar\Mot_\gamma$ defines a bijection between
    $\OpLibre\left(\GenDias\right)(n)/_{\Congr_\gamma}$ and
    $\Dias_\gamma(n)$.
\end{Lemme}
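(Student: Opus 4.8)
The plan is to assemble the three preceding lemmas; essentially all the work has already been done and what remains is bookkeeping. First, $\bar\Mot_\gamma$ is a well-defined map by Lemma~\ref{lem:mot_gamma_stable_classes_equivalence}, and it is surjective: by Lemma~\ref{lem:mot_gamma_surjection} the map $\Mot_\gamma$ is surjective onto $\Dias_\gamma(n)$, and since $\Mot_\gamma$ factors as $\bar\Mot_\gamma$ composed with the canonical surjection $\OpLibre\left(\GenDias\right)(n) \to \OpLibre\left(\GenDias\right)(n)/_{\Congr_\gamma}$, surjectivity of $\bar\Mot_\gamma$ follows immediately. So the only thing left to establish is injectivity.

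For injectivity I would take two syntax trees $\Tfr_1$ and $\Tfr_2$ of $\OpLibre\left(\GenDias\right)(n)$ with $\Mot_\gamma(\Tfr_1) = \Mot_\gamma(\Tfr_2) =: x$, and appeal to Lemma~\ref{lem:reecriture_dias_gamma}: each $\Tfr_i$ can be rewritten by a sequence of $\Recr_\gamma$-rewritings into the hook syntax tree $\Equerre_\gamma(\Mot_\gamma(\Tfr_i)) = \Equerre_\gamma(x)$, the same for $i = 1$ and $i = 2$. Since $\Congr_\gamma$ is by definition the operad congruence induced by $\Recr_\gamma$ — in particular it contains the reflexive, symmetric, and transitive closure of $\Recr_\gamma$, so $\Sfr \overset{*}{\Recr_\gamma} \Sfr'$ implies $\Sfr \Congr_\gamma \Sfr'$ — we get $\Tfr_1 \Congr_\gamma \Equerre_\gamma(x) \Congr_\gamma \Tfr_2$, hence $[\Tfr_1]_{\Congr_\gamma} = [\Tfr_2]_{\Congr_\gamma}$. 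This proves that $\bar\Mot_\gamma$ is injective, and therefore a bijection.

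There is no genuine obstacle here, since the substantive content — that every tree reduces to a hook tree and which hook tree it reduces to — is precisely Lemma~\ref{lem:reecriture_dias_gamma}; the present lemma just records that hook syntax trees serve as a transversal of the $\Congr_\gamma$-classes in each arity and that $\Mot_\gamma$ restricted to hook trees is a bijection onto $\Dias_\gamma(n)$ with inverse $\Equerre_\gamma$. If one prefers a symmetric phrasing, one can instead verify directly that $x \mapsto [\Equerre_\gamma(x)]_{\Congr_\gamma}$ is a two-sided inverse of $\bar\Mot_\gamma$: the identity $\Mot_\gamma(\Equerre_\gamma(x)) = x$ is part of Lemma~\ref{lem:mot_gamma_surjection}, giving one composite equal to the identity on $\Dias_\gamma(n)$, and Lemma~\ref{lem:reecriture_dias_gamma} (together with the remark above that $\Recr_\gamma$-rewritability entails $\Congr_\gamma$-equivalence) gives $[\Equerre_\gamma(\Mot_\gamma(\Tfr))]_{\Congr_\gamma} = [\Tfr]_{\Congr_\gamma}$, the other composite being the identity on $\OpLibre\left(\GenDias\right)(n)/_{\Congr_\gamma}$.
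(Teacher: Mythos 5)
Your proposal is correct and follows essentially the same route as the paper: injectivity by rewriting both trees to the common hook syntax tree $\Equerre_\gamma(x)$ via Lemma~\ref{lem:reecriture_dias_gamma} (hence $\Congr_\gamma$-equivalence), and surjectivity from Lemma~\ref{lem:mot_gamma_surjection}. The additional remark about the explicit two-sided inverse is a harmless rephrasing of the same argument.
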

\begin{proof}
    Let us show that $\bar\Mot_\gamma$ is injective. Let $\Tfr_1$ and
    $\Tfr_2$ be two syntax trees of $\OpLibre\left(\GenDias\right)$ such
    that $\Mot_\gamma(\Tfr_1) = \Mot_\gamma(\Tfr_2)$ and let
    $\Sfr := \Equerre_\gamma(\Mot_\gamma(\Tfr_1)) =
    \Equerre_\gamma(\Mot_\gamma(\Tfr_2))$.
    By Lemma~\ref{lem:reecriture_dias_gamma}, one has
    $\Tfr_1 \overset{*}{\Recr_\gamma} \Sfr$ and $
    \Tfr_2 \overset{*}{\Recr_\gamma} \Sfr$, and hence,
    $\Tfr_1 \Congr_\gamma \Tfr_2$. By the definition of the map
    $\bar \Mot_\gamma$ from the map $\Mot_\gamma$, this show that
    $\bar\Mot_\gamma$ is injective. Besides, by
    Lemma~\ref{lem:mot_gamma_surjection}, $\bar\Mot_\gamma$ is surjective,
    whence the statement of the lemma.
\end{proof}
\medskip

\begin{Theoreme} \label{thm:presentation_dias_gamma}
    For any integer $\gamma \geq 0$, the operad $\Dias_\gamma$
    admits the following presentation. It is generated by $\GenDias$ and
    its space of relations $\RelDias$ is the space induced by the
    equivalence relation $\Rel_\gamma$ satisfying
    \begin{subequations}
    \begin{equation} \label{equ:relation_dias_gamma_1}
        \GDias_a \circ_1 \DDias_{a'}
        \enspace \Rel_\gamma \enspace
        \DDias_{a'} \circ_2 \GDias_a,
        \qquad a, a' \in [\gamma],
    \end{equation}
    \begin{equation} \label{equ:relation_dias_gamma_2}
        \GDias_a \circ_1 \GDias_b
        \enspace \Rel_\gamma \enspace
        \GDias_a \circ_2 \DDias_b,
        \qquad a < b \in [\gamma],
    \end{equation}
    \begin{equation} \label{equ:relation_dias_gamma_3}
        \DDias_a \circ_1 \GDias_b
        \enspace \Rel_\gamma \enspace
        \DDias_a \circ_2 \DDias_b,
        \qquad a < b \in [\gamma],
    \end{equation}
    \begin{equation} \label{equ:relation_dias_gamma_4}
        \GDias_b \circ_1 \GDias_a
        \enspace \Rel_\gamma \enspace
        \GDias_a \circ_2 \GDias_b,
        \qquad a < b \in [\gamma],
    \end{equation}
    \begin{equation} \label{equ:relation_dias_gamma_5}
        \DDias_a \circ_1 \DDias_b
        \enspace \Rel_\gamma \enspace
        \DDias_b \circ_2 \DDias_a,
        \qquad a < b \in [\gamma],
    \end{equation}
    \begin{equation} \label{equ:relation_dias_gamma_6}
        \GDias_d \circ_1 \GDias_d
        \enspace \Rel_\gamma \enspace
        \GDias_d \circ_2 \GDias_c
        \enspace \Rel_\gamma \enspace
        \GDias_d \circ_2 \DDias_c,
        \qquad c \leq d \in [\gamma],
    \end{equation}
    \begin{equation} \label{equ:relation_dias_gamma_7}
        \DDias_d \circ_1 \GDias_c
        \enspace \Rel_\gamma \enspace
        \DDias_d \circ_1 \DDias_c
        \enspace \Rel_\gamma \enspace
        \DDias_d \circ_2 \DDias_d,
        \qquad c \leq d \in [\gamma].
    \end{equation}
    \end{subequations}
\end{Theoreme}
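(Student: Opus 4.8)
The plan is to show that $\Mot_\gamma$ descends to an operad isomorphism from $\OpLibre\left(\GenDias\right)/_{\langle\RelDias\rangle}$ onto $\Dias_\gamma$, which is precisely the assertion that $\left(\GenDias, \RelDias\right)$ is a presentation of $\Dias_\gamma$. Since $\Dias_\gamma$ is, by definition, the suboperad of $\T\Mca_\gamma$ generated by the words $0a$ and $a0$ with $a \in [\gamma]$, and these are the images under $\Mot_\gamma$ of the corollas $\GDias_a$ and $\DDias_a$, the graded set $\GenDias$ serves as set of generators; the whole content of the statement is therefore the identification of the space of relations. As a byproduct, this also shows that $\Dias_\gamma$ is binary and quadratic, since $\GenDias$ is concentrated in arity $2$ and $\RelDias$ is spanned by differences of syntax trees of degree $2$.

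First I would reduce the theorem to Lemma~\ref{lem:mot_gamma_quotient_bijection} by comparing the equivalence relation $\Rel_\gamma$ of \eqref{equ:relation_dias_gamma_1}--\eqref{equ:relation_dias_gamma_7} with the rewrite rule $\Recr_\gamma$ of \eqref{equ:reecriture_dias_gamma_1}--\eqref{equ:reecriture_dias_gamma_9}. The claim is that the operad congruence on the set-operad $\OpLibre\left(\GenDias\right)$ generated by $\Rel_\gamma$ coincides with $\Congr_\gamma$. This is a line-by-line inspection: \eqref{equ:relation_dias_gamma_1} is \eqref{equ:reecriture_dias_gamma_1} read symmetrically, \eqref{equ:relation_dias_gamma_2}--\eqref{equ:relation_dias_gamma_3} are \eqref{equ:reecriture_dias_gamma_2}--\eqref{equ:reecriture_dias_gamma_3}, \eqref{equ:relation_dias_gamma_4}--\eqref{equ:relation_dias_gamma_5} are \eqref{equ:reecriture_dias_gamma_4}--\eqref{equ:reecriture_dias_gamma_5}, and the two chained relations \eqref{equ:relation_dias_gamma_6}, \eqref{equ:relation_dias_gamma_7} are produced by transitivity from \eqref{equ:reecriture_dias_gamma_6}--\eqref{equ:reecriture_dias_gamma_9}, whose left and right members are exactly the endpoints of those chains (the diagonal subcase $c = d$ recovering the $c = d$ instances). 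Conversely, each $\Recr_\gamma$-step is visibly a single instance of $\Rel_\gamma$ or a consequence of two such instances, so the two generated congruences agree. It follows that the operad ideal $\langle\RelDias\rangle$ of the free linear operad $\OpLibre\left(\GenDias\right)$, generated by the differences $\Sfr - \Tfr$ with $\Sfr \Rel_\gamma \Tfr$, coincides with the operad ideal induced by the set-operad congruence $\Congr_\gamma$: an operad ideal generated by a family of such differences is automatically stable under the symmetric, transitive and compositional closures.

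Then I would assemble the conclusion. As recalled in Section~\ref{sec:outils}, the set-operad congruence $\Congr_\gamma$ induces an operad ideal of $\OpLibre\left(\GenDias\right)$ and one has a natural identification $\OpLibre\left(\GenDias\right)/_{\langle\RelDias\rangle} \cong \Vect\left(\OpLibre\left(\GenDias\right)/_{\Congr_\gamma}\right)$. By Lemma~\ref{lem:mot_gamma_morphisme}, $\Mot_\gamma$ is an operad morphism, hence the induced map $\bar\Mot_\gamma$ is a morphism of set-operads; by Lemma~\ref{lem:mot_gamma_quotient_bijection}, $\bar\Mot_\gamma$ is bijective in each arity. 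Therefore $\bar\Mot_\gamma$ is an isomorphism of set-operads, and its linearization is an operad isomorphism from $\OpLibre\left(\GenDias\right)/_{\langle\RelDias\rangle}$ onto $\Vect(\Dias_\gamma)$, that is, onto the linear operad underlying $\Dias_\gamma$. This yields the stated presentation.

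I expect the only real work to be the congruence comparison of the second paragraph; the rest is bookkeeping around already-established lemmas. The subtle point there is to make sure that no relation is lost or added in passing between the two lists — in particular that the chained relations \eqref{equ:relation_dias_gamma_6} and \eqref{equ:relation_dias_gamma_7}, together with their diagonal cases $c = d$, reproduce all of \eqref{equ:reecriture_dias_gamma_6}--\eqref{equ:reecriture_dias_gamma_9} and nothing more — but this is a finite check, not a conceptual obstacle. One could even sidestep it by a dimension count: $\Mot_\gamma$ annihilates every $\Rel_\gamma$-difference, since the two sides have the same sequence of leaf-images by the eligibility argument used for Lemma~\ref{lem:mot_gamma_stable_classes_equivalence}, giving a surjective operad morphism $\OpLibre\left(\GenDias\right)/_{\langle\RelDias\rangle} \to \Dias_\gamma$; and, each $\Recr_\gamma$-step being a $\Rel_\gamma$-relation, Lemma~\ref{lem:reecriture_dias_gamma} shows every syntax tree is $\RelDias$-congruent to a hook syntax tree, so the hook syntax trees span the quotient in each arity. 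Since there are exactly $n\gamma^{n-1}$ of them in arity $n$, matching $\dim\Dias_\gamma(n)$ by \eqref{equ:serie_hilbert_dias_gamma}, this surjection is forced to be an isomorphism.
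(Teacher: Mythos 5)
Your proposal is correct and takes essentially the same route as the paper: the paper's proof likewise combines Lemma~\ref{lem:mot_gamma_morphisme} and Lemma~\ref{lem:mot_gamma_quotient_bijection} to conclude that $\bar\Mot_\gamma$ is an operad isomorphism from $\OpLibre\left(\GenDias\right)/_{\Congr_\gamma}$ onto $\Dias_\gamma$, and then invokes the fact that $\RelDias$ is the space induced by $\Congr_\gamma$. Your second paragraph merely spells out the identification of the congruence generated by $\Rel_\gamma$ with $\Congr_\gamma$, which the paper leaves implicit, and your closing dimension-count remark is an optional variant, not a different argument.
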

\begin{proof}
    By Lemma~\ref{lem:mot_gamma_quotient_bijection}, the map
    $\bar\Mot_\gamma$ is, for any $n \geq 1$, a bijection between the
    sets $\OpLibre\left(\GenDias\right)(n)/_{\Congr_\gamma}$ and
    $\Dias_\gamma(n)$. Moreover, by Lemma~\ref{lem:mot_gamma_morphisme},
    $\Mot_\gamma$ is an operad morphism, and then $\bar\Mot_\gamma$ also
    is. Hence, $\bar\Mot_\gamma$ is an operad isomorphism between
    $\OpLibre\left(\GenDias\right)/_{\Congr_\gamma}$ and $\Dias_\gamma$.
    Therefore, since $\RelLibre_{\Dias_\gamma}$ is the space induced by
    $\Congr_\gamma$, $\Dias_\gamma$ admits the stated presentation.
\end{proof}
\medskip

The space of relations $\RelDias$ of $\Dias_\gamma$ exhibited by
Theorem~\ref{thm:presentation_dias_gamma} can be rephrased in a more
compact way as the space generated by
\begin{subequations}
\begin{equation} \label{equ:relation_dias_gamma_1_concise}
    \GDias_a \circ_1 \DDias_{a'} - \DDias_{a'} \circ_2 \GDias_a,
    \qquad a, a' \in [\gamma],
\end{equation}
\begin{equation} \label{equ:relation_dias_gamma_2_concise}
    \GDias_a \circ_1 \GDias_{a \Max a'} - \GDias_a \circ_2 \DDias_{a'},
    \qquad a, a' \in [\gamma],
\end{equation}
\begin{equation} \label{equ:relation_dias_gamma_3_concise}
    \DDias_a \circ_1 \GDias_{a'} - \DDias_a \circ_2 \DDias_{a \Max a'},
    \qquad a, a' \in [\gamma],
\end{equation}
\begin{equation} \label{equ:relation_dias_gamma_4_concise}
    \GDias_{a \Max a'} \circ_1 \GDias_a - \GDias_a \circ_2 \GDias_{a'},
    \qquad a, a' \in [\gamma],
\end{equation}
\begin{equation} \label{equ:relation_dias_gamma_5_concise}
    \DDias_a \circ_1 \DDias_{a'} - \DDias_{a \Max a'} \circ_2 \DDias_a,
    \qquad a, a' \in [\gamma].
\end{equation}
\end{subequations}
\medskip

Observe that, by Theorem \ref{thm:presentation_dias_gamma}, $\Dias_1$
and the diassociative operad (see~\cite{Lod01} or
Section~\ref{subsec:dias}) admit the same presentation.
Then, for all integers $\gamma \geq 0$, the operads $\Dias_\gamma$
are generalizations of the diassociative operad.
\medskip

\subsection{Miscellaneous properties}
From the description of the elements of $\Dias_\gamma$ and its structure
revealed by its presentation, we develop here some of its properties.
Unless otherwise specified, $\Dias_\gamma$ is still considered in this
section as a set-operad.
\medskip

\subsubsection{Koszulity}

\begin{Theoreme} \label{thm:koszulite_dias_gamma}
    For any integer $\gamma \geq 0$, $\Dias_\gamma$ is a Koszul operad.
    Moreover, the set of hook syntax trees of $\OpLibre\left(\GenDias\right)$
    forms a Poincaré-Birkhoff-Witt basis of $\Dias_\gamma$.
\end{Theoreme}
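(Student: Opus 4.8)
The plan is to invoke the criterion recalled in Section~\ref{subsubsec:Koszulite}: it suffices to exhibit a graded set $S$ and a \emph{convergent quadratic} rewrite rule $\Recr$ on $\OpLibre(S)$ with $\Dias_\gamma$ isomorphic to $\OpLibre(S)/_{\Congr_\Recr}$, and the normal forms then automatically form a Poincaré-Birkhoff-Witt basis. The obvious candidates are $S := \GenDias$ and $\Recr := \Recr_\gamma$. The required isomorphism $\OpLibre(\GenDias)/_{\Congr_\gamma} \simeq \Dias_\gamma$ is precisely what the proof of Theorem~\ref{thm:presentation_dias_gamma} establishes through $\bar\Mot_\gamma$, and $\Recr_\gamma$ is quadratic by construction since every one of \eqref{equ:reecriture_dias_gamma_1}--\eqref{equ:reecriture_dias_gamma_9} relates degree-two syntax trees. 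So the whole matter reduces to proving that $\Recr_\gamma$ is convergent, i.e.\ terminating and confluent, and to identifying its normal forms.

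First I would check, by a routine inspection of the nine rule families, that the $\Recr_\gamma$-irreducible trees are exactly the hook syntax trees: a tree is irreducible iff no left-hand side of \eqref{equ:reecriture_dias_gamma_1}--\eqref{equ:reecriture_dias_gamma_9} occurs as a partial subtree, and one sees — using that, whatever the labels, any $\GDias$ over a $\GDias$ or a $\DDias$ as right child, and any $\DDias$ over a $\GDias$ as right child or over any node as left child, is a redex — that this forces every $\GDias$-labeled node to have a leaf as right child and every $\DDias$-labeled node to have a leaf as left child and no $\GDias$-labeled node as right child; such trees are precisely the hook syntax trees of Section~\ref{subsubsec:arbres_vers_mots}. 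Granting this, confluence is immediate from what is already available: by Lemma~\ref{lem:reecriture_dias_gamma} every $\Tfr$ rewrites to the specific hook $\Equerre_\gamma(\Mot_\gamma(\Tfr))$, and by Lemma~\ref{lem:mot_gamma_stable_classes_equivalence} any $\Recr_\gamma$-rewriting preserves $\Mot_\gamma$; hence if $\Tfr \overset{*}{\Recr_\gamma} \Tfr_1$ and $\Tfr \overset{*}{\Recr_\gamma} \Tfr_2$, then $\Mot_\gamma(\Tfr_1) = \Mot_\gamma(\Tfr) = \Mot_\gamma(\Tfr_2)$ and both $\Tfr_1$ and $\Tfr_2$ rewrite to $\Equerre_\gamma(\Mot_\gamma(\Tfr))$, so $\Recr_\gamma$ is confluent.

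The remaining — and main — obstacle is termination. Since $\GenDias$ is concentrated in arity~$2$ and $\Recr_\gamma$ preserves the arity (hence the number $n-1$ of internal nodes) of a tree, each $\OpLibre(\GenDias)(n)$ is finite, so it is enough to rule out cycles. I would argue in two layers. First, the sum $\Lambda(\Tfr)$ of the integer labels in $[\gamma]$ of the internal nodes of $\Tfr$ is weakly increased by every rule — it is fixed by \eqref{equ:reecriture_dias_gamma_1}--\eqref{equ:reecriture_dias_gamma_5} and increased by $d-c\geq 0$ by \eqref{equ:reecriture_dias_gamma_6}--\eqref{equ:reecriture_dias_gamma_9} — and is bounded above by $\gamma(n-1)$; so along a cycle $\Lambda$ is constant and the cycle uses only label-preserving steps, in particular only the instances of \eqref{equ:reecriture_dias_gamma_6}--\eqref{equ:reecriture_dias_gamma_9} with $c=d$. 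For this restricted system I would then exhibit a well-founded statistic that strictly decreases at each step, measuring how far $\Tfr$ is from a hook — for instance a lexicographic combination of counts of ``badly nested'' pairs of internal nodes (a $\DDias$-node sitting in some node's left subtree, or below another $\DDias$'s left subtree, or above a $\GDias$-node, together with the symmetric conditions for $\GDias$) refined by a path-length correction — and I would verify the resulting inequality for each rule family together with its invariance under grafting into an arbitrary context, which is the technical heart of the argument. Once such a monovariant is in place there are no cycles, hence $\Recr_\gamma$ is terminating and thus convergent, and the criterion of Section~\ref{subsubsec:Koszulite} yields simultaneously the Koszulity of $\Dias_\gamma$ and the fact that its set of normal forms, which by the first step is the set of hook syntax trees of $\OpLibre(\GenDias)$, is a Poincaré-Birkhoff-Witt basis.
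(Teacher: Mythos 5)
Your overall strategy is the paper's: apply the criterion of Section~\ref{subsubsec:Koszulite} to $S=\GenDias$ and $\Recr=\Recr_\gamma$, use the isomorphism of Theorem~\ref{thm:presentation_dias_gamma}, identify the $\Recr_\gamma$-irreducible trees as the hook syntax trees, and deduce confluence from Lemmas~\ref{lem:mot_gamma_stable_classes_equivalence} and~\ref{lem:reecriture_dias_gamma}; those parts are correct. The genuine gap is in the step you yourself call the technical heart, namely termination. Your first layer is fine: the label sum $\Lambda$ is weakly increasing under every rule and bounded, so a cycle could only use \eqref{equ:reecriture_dias_gamma_1}---\eqref{equ:reecriture_dias_gamma_5} and the $c=d$ instances of \eqref{equ:reecriture_dias_gamma_6}---\eqref{equ:reecriture_dias_gamma_9}. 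But the second layer is only a promissory note: the strictly decreasing statistic for this residual system is never defined, and the candidate you gesture at (counts of ``badly nested'' pairs refined by a path-length correction) is not obviously monotone, in particular because the rules do not preserve the $\GDias$/$\DDias$ type of a node (under \eqref{equ:reecriture_dias_gamma_2} a $\DDias$-labelled node becomes $\GDias$-labelled, under \eqref{equ:reecriture_dias_gamma_3} conversely), so pair-counts phrased in terms of these types can increase through an ambient context. As written, convergence of $\Recr_\gamma$, hence both assertions of the theorem, is not established.

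The gap is fillable along your lines, but the verification has to be carried out. For instance, let $m(\Tfr)$ be the number of internal nodes lying in the right subtree of a $\GDias$-labelled node plus the number of internal nodes lying in the left subtree of a $\DDias$-labelled node, and let $N(\Tfr)$ be the number of pairs formed by a $\DDias$-labelled node and a $\GDias$-labelled node in its right subtree. A check of each rule inside an arbitrary context shows that $m$ strictly decreases under all of \eqref{equ:reecriture_dias_gamma_2}---\eqref{equ:reecriture_dias_gamma_9} and is unchanged under \eqref{equ:reecriture_dias_gamma_1}, while $N$ strictly decreases under \eqref{equ:reecriture_dias_gamma_1}; so the pair $(m,N)$, ordered lexicographically, already proves termination (your $\Lambda$-layer then becomes superfluous). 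Note finally that the paper takes a cheaper route here: it deduces termination and the identification of the normal forms directly from Lemma~\ref{lem:reecriture_dias_gamma} together with the observation that hook syntax trees are $\Recr_\gamma$-irreducible, with no monovariant at all; you could either complete your measure as above or argue as the paper does.
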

\begin{proof}
    From the definition of hook syntax trees, it appears that no hook
    syntax tree can be rewritten by $\Recr_\gamma$ into another syntax
    tree. Hence, and by Lemma~\ref{lem:reecriture_dias_gamma},
    $\Recr_\gamma$ is a terminating rewrite rule and its normal forms
    are hook syntax trees. Moreover, again by
    Lemma~\ref{lem:reecriture_dias_gamma}, since any syntax tree is
    rewritable by $\Recr_\gamma$ into a unique hook syntax tree,
    $\Recr_\gamma$ is a confluent rewrite rule, and hence, $\Recr_\gamma$
    is convergent. Now, since by Theorem~\ref{thm:presentation_dias_gamma},
    the space of relations of $\Dias_\gamma$ is the space induced by the
    operad congruence induced by $\Recr_\gamma$, by the Koszulity
    criterion~\cite{Hof10,DK10,LV12} we have reformulated in
    Section~\ref{subsubsec:Koszulite}, $\Dias_\gamma$ is a Koszul
    operad and the set of of hook syntax trees of
    $\OpLibre\left(\GenDias\right)$ forms a Poincaré-Birkhoff-Witt basis
    of $\Dias_\gamma$.
\end{proof}
\medskip

\subsubsection{Symmetries}
If $\Oca_1$ and $\Oca_2$ are two operads, a linear map
$\phi : \Oca_1 \to \Oca_2$ is an {\em operad antimorphism} if it
respects arities and anticommutes with partial composition maps, that is,
\begin{equation}
    \phi(x \circ_i y) = \phi(x) \circ_{n - i + 1} \phi(y),
    \qquad x \in \Oca(n), y \in \Oca, i \in [n].
\end{equation}
A {\em symmetry} of an operad $\Oca$ is either an automorphism or an
antiautomorphism. The set of all symmetries of $\Oca$ form a group for
the composition, called the {\em group of symmetries} of $\Oca$.
\medskip

\begin{Proposition} \label{prop:symetries_dias_gamma}
    For any integer $\gamma \geq 0$, the group of symmetries of
    $\Dias_\gamma$ as a set-operad contains two elements: the identity
    map and the linear map sending any word of $\Dias_\gamma$ to its
    mirror image.
\end{Proposition}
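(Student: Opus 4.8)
The plan is to verify first that the mirror map really is a symmetry, and then to show by a short counting argument that it and the identity exhaust the group.

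First I would handle the mirror map $\rho$, sending a word $w$ of $\Dias_\gamma$ to its reversal $\overline{w}$. By Proposition~\ref{prop:elements_dias_gamma} the elements of $\Dias_\gamma(n)$ are exactly the words on $\{0\} \cup [\gamma]$ with a unique occurrence of $0$, a property stable under reversal; hence $\rho$ restricts to an involutive bijection of each $\Dias_\gamma(n)$. The only thing to check is that $\rho$ anticommutes with the partial compositions, which is a short computation straight from the formula $u \circ_i v = u_1 \cdots u_{i-1}\,(u_i \Max v_1) \cdots (u_i \Max v_m)\,u_{i+1} \cdots u_n$ in $\T \Mca_\gamma$: reading both sides from right to left, and using that the $(|u|-i+1)$-st letter of $\overline{u}$ is $u_i$ while $\overline{v}$ reverses the $v_j$, one obtains $\overline{u \circ_i v} = \overline{u} \circ_{|u| - i + 1} \overline{v}$. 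So $\rho$ is an operad antiautomorphism, hence a symmetry, and $\rho \neq \mathrm{id}$ as soon as $\gamma \geq 1$ (for instance $\rho(01) = 10$); the case $\gamma = 0$ is trivial since $\Dias_0$ is the trivial operad.

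Next I would take an arbitrary symmetry $\phi$ of $\Dias_\gamma$. Being a bijection that respects arities, $\phi$ restricts to a bijection of $\Dias_\gamma(2) = \{0a, a0 : a \in [\gamma]\}$, and since $\Dias_\gamma$ is generated by $\Dias_\gamma(2)$ (by its very construction in Section~\ref{subsubsec:construction_dias_gamma}) and $\phi$ (anti)commutes with the maps $\circ_i$, $\phi$ is completely determined by this restriction. The key device is, for $g \in \Dias_\gamma(2)$ and $i \in \{1,2\}$, the integer
\[
    \nu_i(g) := \#\left\{h \in \Dias_\gamma(2) : g \circ_i h = g \circ_i g\right\}.
\]
A direct check with the product $\Max$ — using that every generator is associative, so $g \circ_1 g = g \circ_2 g$ — gives $\nu_1(0d) = \nu_2(d0) = 1$ and $\nu_2(0d) = \nu_1(d0) = 2d$ for each $d \in [\gamma]$; for example $0d \circ_2 h = 0(d \Max h_1)(d \Max h_2)$ equals $0dd$ exactly when both letters of $h$ are at most $d$, i.e.\ for the $2d$ generators $0c, c0$ with $c \leq d$, whereas $0d \circ_1 h = h_1 h_2 d$ equals $0dd$ only for $h = 0d$. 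In particular the map $g \mapsto (\nu_1(g), \nu_2(g))$ is injective on $\Dias_\gamma(2)$, since the $2\gamma$ pairs $(1,2d)$ and $(2d,1)$, $d \in [\gamma]$, are pairwise distinct.

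Finally I would combine these. If $\phi$ is an automorphism, then $g \circ_i h = g \circ_i g$ iff $\phi(g) \circ_i \phi(h) = \phi(g) \circ_i \phi(g)$, so $\nu_i(\phi(g)) = \nu_i(g)$ for $i = 1, 2$; by injectivity of the above invariant this forces $\phi(g) = g$ for every generator $g$, hence $\phi = \mathrm{id}$. If $\phi$ is an antiautomorphism, the equivalence reads instead $g \circ_i h = g \circ_i g$ iff $\phi(g) \circ_{3-i} \phi(h) = \phi(g) \circ_{3-i} \phi(g)$, so $\nu_i(\phi(g)) = \nu_{3-i}(g)$, which forces $\phi(0d) = d0$ and $\phi(d0) = 0d$ for all $d$; since $\phi$ then agrees with $\rho$ on the generators, $\phi = \rho$. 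Thus the group of symmetries is exactly $\{\mathrm{id}, \rho\}$. The main obstacle, modest as it is, is to find an invariant that is preserved by automorphisms, swapped by antiautomorphisms, and injective on the $2\gamma$ generators; once $\nu_i$ is in hand, the remaining steps — including the anticommutation check for $\rho$ — are routine bookkeeping with $\Max$.
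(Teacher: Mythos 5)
Your proposal is correct, and it reaches the conclusion by a genuinely different route than the paper. Both arguments start from the same reduction: a symmetry $\phi$ respects arities, hence permutes the generating set $\Dias_\gamma(2) = \{0a, a0 : a \in [\gamma]\}$, and is determined by that permutation. The paper then argues by contradiction with an explicit case analysis: assuming an automorphism $\phi \neq \mathrm{id}$, it distinguishes whether $\phi$ exchanges the families $\{0a\}$ and $\{a0\}$ or not, and in each case evaluates a pair of equal compositions (such as $b0 \circ_2 0a = 0a \circ_1 b0$, or $0a \circ_2 0b = 0b \circ_1 0a$ with a without-loss-of-generality choice $a<b$, $b'<a'$) whose images under $\phi$ disagree; the facts that the mirror map is an antiautomorphism and that it is the only one are then dispatched as ``immediate'' and ``by similar arguments''. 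You instead build the counting invariant $\nu_i(g) = \#\{h \in \Dias_\gamma(2) : g \circ_i h = g \circ_i g\}$, compute $(\nu_1,\nu_2)(0d) = (1,2d)$ and $(\nu_1,\nu_2)(d0) = (2d,1)$ (your computation with $\Max$ is correct, and injectivity holds since $2d \geq 2$), and observe that automorphisms preserve while antiautomorphisms swap the pair; injectivity on generators then forces $\phi = \mathrm{id}$ in the first case and $\phi = \rho$ in the second. What your approach buys is a uniform treatment of the automorphism and antiautomorphism cases with no case splitting or WLOG choices, together with an explicit verification of the identity $\overline{u \circ_i v} = \overline{u} \circ_{|u|-i+1} \overline{v}$ in $\T\Mca_\gamma$, which the paper leaves unproved; the paper's argument, on the other hand, is more elementary in that it only manipulates two concrete relations among generators rather than introducing an auxiliary invariant.
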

\begin{proof}
    Let us denote by $\Gen_\gamma$ the set $\{0a, a0 : a \in [\gamma]\}$.
    Since $\Dias_\gamma$ is generated by $\Gen_\gamma$, any automorphism
    or antiautomorphism $\phi$ of $\Dias_\gamma$ is wholly determined by
    the images of the elements of $\Gen_\gamma$. Besides let us observe
    that $\phi$ is in particular a permutation of $\Gen_\gamma$.
    \smallskip

    By contradiction, assume that $\phi$ is an automorphism of
    $\Dias_\gamma$ different from the identity map. We have two cases to
    explore.
    \smallskip

    \begin{enumerate}[label={\it Case \arabic*.},fullwidth]
        \item If there are $a, a' \in [\gamma]$ satisfying
        $\phi(0a) = a'0$, since $\phi$ is a permutation of $\Gen_\gamma$,
        there are $b, b' \in [\gamma]$ satisfying $\phi(b0) = 0b'$. Then,
        we have at the same time $b0 \circ_2 0a = b0a = 0a \circ_1 b0$,
        \begin{equation}
            \phi(b0 \circ_2 0a) = \phi(b0) \circ_2 \phi(0a)
            = 0b' \circ_2 a'0 = 0 \, (b'\Max a') \, b',
        \end{equation}
        and
        \begin{equation}
            \phi(0a \circ_1 b0) = \phi(0a) \circ_1 \phi(b0)
            = a'0 \circ_1 0b' = a' \, (a' \Max b') \, 0.
        \end{equation}
        This shows that $\phi(b0 \circ_2 0a) \ne \phi(0a \circ_1 b0)$
        and hence, $\phi$ is not an operad morphism. By a similar
        argument, one can show that there are no $a, a' \in [\gamma]$
        such that $\phi(a0) = 0a'$.
        \smallskip

        \item Otherwise, for all $a \in [\gamma]$, we have $\phi(0a) = 0a'$
        and $\phi(a0) = a''0$ for some $a', a'' \in [\gamma]$. Since, by
        hypothesis, $\phi$ is not the identity map, there exist
        $a \ne a' \in [\gamma]$ such that $\phi(0a) = 0a'$ or
        $\phi(a0) = a'0$. Let us assume, without loss of generality,
        that $\phi(0a) = 0a'$. Since $\phi$ is a permutation of
        $\Gen_\gamma$, there exist $b \ne b' \in [\gamma]$ such that
        $\phi(0b) = 0b'$. One can assume, without loss of generality,
        that $a < b$ and $b' < a'$. Then, we have at the same time
        $0a \circ_2 0b = 0ab = 0b \circ_1 0a$,
        \begin{equation}
            \phi(0a \circ_2 0b) = \phi(0a) \circ_2 \phi(0b)
            = 0a' \circ_2 0b' = 0a'a',
        \end{equation}
        and
        \begin{equation}
            \phi(0b \circ_1 0a) = \phi(0b) \circ_1 \phi(0a)
            = 0b' \circ_1 0a' = 0a'b'.
        \end{equation}
        This shows that $\phi(0a \circ_2 0b) \ne \phi(0b \circ_1 0a)$
        and hence, that $\phi$ is not an operad morphism. By a similar
        argument, one can show that there are no
        $a \ne a' \in [\gamma]$ such that $\phi(a0) = \phi(a'0)$.
    \end{enumerate}
    \smallskip

    We then have shown that if $\phi$ is an automorphism of $\Dias_\gamma$,
    it is necessarily the identity map.
    \smallskip

    Finally, by Proposition~\ref{prop:elements_dias_gamma}, if $x$ is
    an element of $\Dias_\gamma$, its mirror image also is in
    $\Dias_\gamma$. Moreover, it is immediate to see that the map sending
    a word to its mirror image is an antiautomorphism of $\Dias_\gamma$.
    Similar arguments as the ones developed previously show that it is
    the only.
\end{proof}
\medskip

\subsubsection{Basic operad}
A set-operad $\Oca$ is {\em basic} if for all $y_1, \dots, y_n \in \Oca$,
all the maps
\begin{equation}
    \circ^{y_1, \dots, y_n} :
    \Oca(n) \to \Oca(|y_1| + \dots + |y_n|)
\end{equation}
defined by
\begin{equation}
    \circ^{y_1, \dots, y_n}(x) := x \circ (y_1, \dots, y_n),
    \qquad x \in \Oca(n),
\end{equation}
are injective. This property for set-operads introduced by
Vallette~\cite{Val07} is a very relevant one since there is a general
construction producing a family of posets (see~\cite{MY91} and~\cite{CL07})
from a basic set-operad. This family of posets leads to the definition
of an incidence Hopf algebra by a construction of Schmitt~\cite{Sch94}.
\medskip

\begin{Proposition} \label{prop:dias_gamma_basique}
    For any integer $\gamma \geq 0$, $\Dias_\gamma$ is a basic operad.
\end{Proposition}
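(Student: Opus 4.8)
The plan is to argue directly with the concrete realization of $\Dias_\gamma$ as a suboperad of $\T \Mca_\gamma$, relying on the description of its elements furnished by Proposition~\ref{prop:elements_dias_gamma}. First I would unfold the composition map. If $x = x_1 \dots x_n$ is a word of $\Dias_\gamma(n)$ and $y_1, \dots, y_n$ are words of $\Dias_\gamma$ of respective lengths $m_1, \dots, m_n$, then, from the definition of the composition maps and of the product of $\T \Mca_\gamma$ (see Section~\ref{subsec:monoides_vers_operades}), the word $x \circ (y_1, \dots, y_n)$ is the concatenation $w^{(1)} \dots w^{(n)}$ of the $n$ blocks $w^{(j)} := (x_j \Max y_{j, 1}) \dots (x_j \Max y_{j, m_j})$; indeed, since the partial compositions are performed at the decreasing positions $n, n - 1, \dots, 1$, the insertion of $y_j$ never disturbs an already inserted block $w^{(j')}$ with $j' > j$.

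Now fix $y_1, \dots, y_n \in \Dias_\gamma$, so that the lengths $m_j := |y_j|$ are fixed. Given any word $w$ in the image of $\circ^{y_1, \dots, y_n}$, one recovers without ambiguity the $n$ consecutive blocks $w^{(1)}, \dots, w^{(n)}$ of $w$, the $j$th having length $m_j$. The key observation is that, by Proposition~\ref{prop:elements_dias_gamma}, each $y_j$ contains exactly one occurrence of $0$, say at some position $\ell_j \in [m_j]$. Consequently the letter of $w^{(j)}$ at position $\ell_j$ equals $x_j \Max 0 = x_j$. Hence every letter $x_j$ of $x$ is entirely determined by $w$ together with the fixed data $y_1, \dots, y_n$. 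This shows that $\circ^{y_1, \dots, y_n}$ is injective, and since $y_1, \dots, y_n$ were arbitrary, $\Dias_\gamma$ is a basic set-operad.

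I expect essentially no obstacle here: the argument is a short computation, and the only point requiring a little care is to justify the block decomposition $w^{(1)} \dots w^{(n)}$ of $x \circ (y_1, \dots, y_n)$, which is immediate from the definitions once one notes the non-interference of the leftward insertions. One could instead phrase the same idea through the hook syntax trees $\Equerre_\gamma$ of Section~\ref{subsubsec:arbres_vers_mots}, but the word-level computation above is the most economical, and it also covers the degenerate case $\gamma = 0$, where $\Dias_0$ is the trivial operad and the statement holds vacuously.
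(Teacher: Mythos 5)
Your proposal is correct and follows essentially the same route as the paper: both arguments unfold the composition letterwise (each letter of $x \circ (y_1, \dots, y_n)$ being of the form $x_i \Max y_{i,j}$) and then use the fact, from Proposition~\ref{prop:elements_dias_gamma}, that every $y_i$ contains an occurrence of $0$, so that $x_i \Max 0 = x_i$ can be read off the composed word, giving injectivity of $\circ^{y_1, \dots, y_n}$. Your explicit block decomposition is just a slightly more detailed phrasing of the same computation.
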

\begin{proof}
    Let $n \geq 1$, $y_1, \dots, y_n$ be words of $\Dias_\gamma$,
    and $x$ and $x'$ be two words of $\Dias_\gamma(n)$ such that
    $\circ^{y_1, \dots, y_n}(x) = \circ^{y_1, \dots, y_n}(x')$. Then,
    for all $i \in [n]$ and $j \in [|y_i|]$, we have
    $x_i \Max y_{i, j} = x'_i \Max y_{i, j}$ where $y_{i, j}$ is the
    $j$th letter of $y_i$. Since by
    Proposition~\ref{prop:elements_dias_gamma}, any word $y_i$ contains
    a $0$, we have in particular $x_i \Max 0 = x'_i \Max 0$ for all
    $i \in [n]$. This implies $x = x'$ and thus, that
    $\circ^{y_1, \dots, y_n}$ is injective.
\end{proof}
\medskip

\subsubsection{Rooted operad}
We restate here a property on operads introduced by
Chapoton~\cite{Cha14}. An operad $\Oca$ is {\em rooted} if there is a map
\begin{equation}
    \Racine : \Oca(n) \to [n],
    \qquad n \geq 1,
\end{equation}
satisfying, for all $x \in \Oca(n)$, $y \in \Oca(m)$, and $i \in [n]$,
\begin{equation} \label{equ:operade_enracinee}
    \Racine(x \circ_i y) =
    \begin{cases}
        \Racine(x) + m - 1 & \mbox{if } i \leq \Racine(x) - 1, \\
        \Racine(x) + \Racine(y) - 1 & \mbox{if } i = \Racine(x), \\
        \Racine(x) & \mbox{otherwise (} i \geq \Racine(x) + 1 \mbox{)}.
    \end{cases}
\end{equation}
We call such a map a {\em root map}. More intuitively, the root map of a
rooted operad associates a particular input with any of its elements and
this input is preserved by partial compositions.
\medskip

It is immediate that any operad $\Oca$ is a rooted operad for the root
maps $\Racine_{\mathrm{L}}$ and $\Racine_{\mathrm{R}}$, which send
respectively all elements $x$ of arity $n$ to $1$ or to $n$. For this
reason, we say that an operad~$\Oca$ is {\em nontrivially rooted} if
it can be endowed with a root map different from $\Racine_{\mathrm{L}}$
and~$\Racine_{\mathrm{R}}$.
\medskip

\begin{Proposition} \label{prop:dias_gamma_enracinee}
    For any integer $\gamma \geq 0$, $\Dias_\gamma$ is a nontrivially
    rooted operad for the root map sending any word of $\Dias_\gamma$ to
    the position of its $0$.
\end{Proposition}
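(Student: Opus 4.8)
The plan is to rely on Proposition~\ref{prop:elements_dias_gamma}, which tells us that any word $x$ of $\Dias_\gamma$ has exactly one occurrence of $0$. This makes the proposed map well-defined: $\Racine(x)$ is the unique position $k \in [|x|]$ such that $x_k = 0$. What then remains is to check that $\Racine$ satisfies the defining relation~\eqref{equ:operade_enracinee} of a root map, and afterwards to exhibit an element witnessing $\Racine \ne \Racine_{\mathrm{L}}$ and $\Racine \ne \Racine_{\mathrm{R}}$.

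For the verification of~\eqref{equ:operade_enracinee}, I would take $x \in \Dias_\gamma(n)$, $y \in \Dias_\gamma(m)$, $i \in [n]$, and set $k := \Racine(x)$, $\ell := \Racine(y)$. Since $\Dias_\gamma$ is a suboperad of $\T \Mca_\gamma$, the word $x \circ_i y$ is obtained from $x$ by replacing the letter $x_i$ with the factor $(x_i \Max y_1) \cdots (x_i \Max y_m)$. I would then split into three cases according to the position of $i$ relative to $k$. If $i < k$, then $x_i \ne 0$, so every letter $x_i \Max y_j$ of the inserted factor is nonzero; the only $0$ of $x \circ_i y$ is thus the copy of $x_k$, which has been shifted right by $m-1$ positions, giving $\Racine(x \circ_i y) = k + m - 1$. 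If $i = k$, then $x_i = 0$ and $0 \Max y_j = y_j$ for every $j$, so the inserted factor is exactly $y$; the surviving $0$ is the $0$ of this copy of $y$, sitting after the $k - 1$ letters of $x$ that precede it, so $\Racine(x \circ_i y) = (k-1) + \ell = k + \ell - 1$. Finally, if $i > k$, then $x_i \ne 0$, the inserted factor is again entirely nonzero, and the $0$ of $x$ stays at position $k$, so $\Racine(x \circ_i y) = k$. These three outcomes match exactly the three branches of~\eqref{equ:operade_enracinee}.

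For the nontriviality, I would observe that when $\gamma \geq 1$ the word $\La 0 \La$ (with $\La$ standing for any fixed element of $[\gamma]$) lies in $\Dias_\gamma(3)$ by Proposition~\ref{prop:elements_dias_gamma}, and $\Racine(\La 0 \La) = 2 \notin \{1,3\}$, so $\Racine$ differs from both $\Racine_{\mathrm{L}}$ and $\Racine_{\mathrm{R}}$; the case $\gamma = 0$, where $\Dias_\gamma$ is the trivial operad, needs no further comment.

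I do not expect a genuine obstacle here: the whole argument is elementary bookkeeping of the position of the unique surviving $0$ through an insertion. The only point that calls for a little care is the middle case $i = k$, where one must notice that composing onto the letter $0$ leaves the inserted word unchanged, so that it is precisely the $0$ of the factor $y$ that survives and contributes the summand $\Racine(y)$.
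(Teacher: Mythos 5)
Your proposal is correct and follows essentially the same route as the paper: well-definedness comes from Proposition~\ref{prop:elements_dias_gamma}, the relation~\eqref{equ:operade_enracinee} follows from the fact that $0$ is neutral for $\Max$ together with the uniqueness of the $0$ in each word, and nontriviality is checked directly. You merely spell out in full the three-case bookkeeping that the paper's proof leaves as a one-line observation, which is a faithful elaboration rather than a different argument.
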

\begin{proof}
    Thanks to Proposition~\ref{prop:elements_dias_gamma}, the map of the
    statement of the proposition is well-defined. The fact that $0$ is
    the neutral element for the $\Max$ operation and the fact that any
    word of $\Dias_\gamma$ contains exactly one $0$ imply that this map
    satisfies~\eqref{equ:operade_enracinee}. Finally, this map is
    obviously different from $\Racine_{\mathrm{L}}$ and
    $\Racine_{\mathrm{R}}$, whence the statement of the proposition.
\end{proof}
\medskip

\subsubsection{Alternative basis} \label{subsubsec:base_K}
In this section, $\Dias_\gamma$ is considered as an operad in the
category of vector spaces.
\medskip

Let $\OrdDias_\gamma$ be the order relation on the underlying set of
$\Dias_\gamma(n)$, $n \geq 1$, where for all words $x$ and $y$ of
$\Dias_\gamma$ of a same arity $n$, we have
\begin{equation}
    x \OrdDias_\gamma y
    \qquad \mbox{ if } x_i \leq y_i \mbox{ for all } i \in [n].
\end{equation}
This order relation allows to define
for all word $x$ of $\Dias_\gamma$ the elements
\begin{equation} \label{equ:base_K_vers_mots}
    \Ksf^{(\gamma)}_x :=
    \sum_{x \OrdDias_\gamma x'} \mu_\gamma(x, x') \, x',
\end{equation}
where $\mu_\gamma$ is the Möbius function of the poset defined by
$\OrdDias_\gamma$. For instance,
\begin{equation}
    \Ksf^{(2)}_{102} = 102 - 202,
\end{equation}
\begin{equation}
    \Ksf^{(3)}_{102} = \Ksf^{(4)}_{102} = 102 - 103 - 202 + 203,
\end{equation}
\begin{equation}
    \Ksf^{(3)}_{23102}
    = 23102 - 23103 - 23202 + 23203 - 33102 + 33103 + 33202 - 33203.
\end{equation}
\medskip

Since, by Möbius inversion, for any word $x$ of $\Dias_\gamma$ one has
\begin{equation} \label{equ:mots_vers_base_K}
    x = \sum_{x \OrdDias_\gamma x'} \Ksf^{(\gamma)}_{x'},
\end{equation}
the family of all $\Ksf^{(\gamma)}_x$, where the $x$ are words of
$\Dias_\gamma$, forms by triangularity a basis of $\Dias_\gamma$, called
the {\em $\Ksf$-basis}.
\medskip

If $u$ and $v$ are two words of a same length $n$, we denote by
$\Hamming(u, v)$ the {\em Hamming distance} between $u$ and $v$ that is
the number of positions $i \in [n]$ such that $u_i \ne v_i$. Moreover,
for any word $x$ of $\Dias_\gamma$ of length $n$ and any subset $J$ of
$[n]$, we denote by $\Incr_\gamma(x, J)$ the set of words obtained by
incrementing by one some letters of $x$ smaller than $\gamma$ and
greater than $0$ whose positions are in $J$. We shall simply denote by
$\Incr_\gamma(x)$ the set $\Incr_\gamma(x, [n])$.
Proposition~\ref{prop:elements_dias_gamma} ensures that all
$\Incr_\gamma(x, J)$ are sets of words of $\Dias_\gamma$.
\medskip

\begin{Lemme} \label{lem:expression_directe_base_K}
    For any integer $\gamma \geq 0$ and any word $x$ of $\Dias_\gamma$,
    \begin{equation} \label{equ:expression_directe_base_K}
        \Ksf^{(\gamma)}_x =
        \sum_{x' \in \Incr_\gamma(x)} (-1)^{\Hamming(x, x')} \, x'.
    \end{equation}
\end{Lemme}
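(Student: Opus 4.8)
The plan is to understand the structure of the set of words above $x$ in the poset $\left(\Dias_\gamma(n), \OrdDias_\gamma\right)$ and to compute its Möbius function through a product decomposition. Fix a word $x$ of $\Dias_\gamma$ of arity $n$ and let $k \in [n]$ be the position of its unique occurrence of $0$, which exists by Proposition~\ref{prop:elements_dias_gamma}. First I would observe that if $x \OrdDias_\gamma x'$ with $x'$ a word of $\Dias_\gamma(n)$, then $x'_i \geq x_i \geq 1$ for every $i \ne k$, so the single $0$ of $x'$ must sit at position $k$; hence $x'_k = 0$ and $x_i \leq x'_i \leq \gamma$ for all $i \ne k$. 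Conversely, any word $x'$ of length $n$ with $x'_k = 0$ and $x_i \leq x'_i \leq \gamma$ for $i \ne k$ contains exactly one $0$ and therefore belongs to $\Dias_\gamma(n)$, again by Proposition~\ref{prop:elements_dias_gamma}. Consequently, the set $\left\{x' : x \OrdDias_\gamma x'\right\}$, equipped with the order induced by $\OrdDias_\gamma$, is isomorphic to the product of chains $\prod_{i \ne k} [x_i, \gamma]$ via the map $x' \mapsto (x'_i)_{i \ne k}$.

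Next I would compute $\mu_\gamma(x, x')$ for $x \OrdDias_\gamma x'$ using the facts that the Möbius function of a finite product of posets is the product, over the factors, of their Möbius functions, and that the Möbius function of a chain takes the value $1$ on each pair $(a, a)$, the value $-1$ on each pair $(a, a + 1)$, and the value $0$ on every other pair. This gives
\[
    \mu_\gamma(x, x') = \prod_{i \ne k} \mu_{[x_i, \gamma]}(x_i, x'_i),
\]
which vanishes as soon as $x'_i \geq x_i + 2$ for some $i \ne k$, and otherwise equals $(-1)^{\# \{i \ne k \, : \, x'_i = x_i + 1\}}$.

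Then I would translate this back into the combinatorial language of the statement. The words $x'$ with $x \OrdDias_\gamma x'$ and $x'_i \in \{x_i, x_i + 1\}$ for all $i \ne k$ are exactly the words obtained from $x$ by incrementing some subset of the positions carrying a letter in $[1, \gamma - 1]$: a position $i$ with $x_i = 0$ is the position $k$ and is forced to keep the value $0$, a position with $x_i = \gamma$ cannot be raised, and a position with $x_i \in [1, \gamma - 1]$ is either left unchanged or raised to $x_i + 1$. This set is precisely $\Incr_\gamma(x)$, and for such an $x'$ the positions at which $x$ and $x'$ differ are exactly the incremented ones, so $\# \{i \ne k : x'_i = x_i + 1\} = \Hamming(x, x')$. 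Substituting $\mu_\gamma(x, x') = (-1)^{\Hamming(x, x')}$ for $x' \in \Incr_\gamma(x)$ and $\mu_\gamma(x, x') = 0$ otherwise into the defining formula~\eqref{equ:base_K_vers_mots} for $\Ksf^{(\gamma)}_x$ yields~\eqref{equ:expression_directe_base_K}.

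The only point requiring genuine care is the identification of the set of words above $x$ with a product of chains, in particular the two uses of Proposition~\ref{prop:elements_dias_gamma} needed to pass between ``word $x'$ with $x'_k = 0$ and $x_i \leq x'_i \leq \gamma$'' and ``word of $\Dias_\gamma(n)$ above $x$'', and the bookkeeping matching the incrementable positions of $x$ with the nontrivial chain factors; once this is done the Möbius computation is immediate. As an alternative avoiding the product formula, one could instead check directly that the right-hand side of~\eqref{equ:expression_directe_base_K} obeys the defining recursion of the Möbius function over the interval $[x, x']$, the crucial identity being $\sum_{S \subseteq D} (-1)^{\# S} = 0$ whenever the set $D$ of positions at which $x$ and $x'$ strictly differ is nonempty.
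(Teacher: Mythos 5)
Your proposal is correct and follows essentially the same route as the paper: identify the interval above $x$ in $\left(\Dias_\gamma(n), \OrdDias_\gamma\right)$ with a Cartesian product of chains (the position of the unique $0$ contributing a trivial factor), apply the product formula for Möbius functions together with the Möbius function of a chain, and substitute the resulting values $(-1)^{\Hamming(x, x')}$ on $\Incr_\gamma(x)$ and $0$ elsewhere into~\eqref{equ:base_K_vers_mots}. The only difference is cosmetic: the paper realizes the product as $\Tbb(\gamma - x_1) \times \dots \times \Tbb(0) \times \dots \times \Tbb(\gamma - x_n)$ via an explicit shift isomorphism, whereas you work directly with the chains $[x_i, \gamma]$.
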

\begin{proof}
    Let $n$ be the arity of $x$. To compute $\Ksf^{(\gamma)}_x$ from its
    definition~\eqref{equ:base_K_vers_mots}, it is enough to know the
    Möbius function $\mu_\gamma$ of the poset $\Pbb^{(\gamma)}_x$
    consisting in the words $x'$ of $\Dias_\gamma$ satisfying
    $x \OrdDias_\gamma x'$. Immediately from the definition of
    $\OrdDias_\gamma$, it appears that $\Pbb^{(\gamma)}_x$ is isomorphic
    to the Cartesian product poset
    \begin{equation} \label{equ:expression_directe_base_K_poset}
        \Tbb^{(\gamma)}_x :=
        \Tbb\left(\gamma - x_1\right)
        \times \dots \times
        \Tbb\left(\gamma - x_{r - 1}\right)
        \times
        \Tbb(0)
        \times
        \Tbb\left(\gamma - x_{r + 1}\right)
        \times \dots \times
        \Tbb\left(\gamma - x_n\right),
    \end{equation}
    where for any nonnegative integer $k$, $\Tbb(k)$ denotes the poset
    over $\{0\} \cup [k]$ with the natural total order relation, and $r$
    is the position of, by Proposition~\ref{prop:elements_dias_gamma},
    the only $0$ of $x$. The map
    $\phi^{(\gamma)}_x : \Pbb^{(\gamma)}_x \to \Tbb^{(\gamma)}_x$
    defined for all words $x'$ of $\Pbb^{(\gamma)}_x$ by
    \begin{equation}
        \phi^{(\gamma)}_x(x') := \left(x'_1 - x_1, \dots,
        x'_{r - 1} - x_{r - 1}, 0, x'_{r + 1} - x_{r + 1},
        \dots, x'_n - x_n\right)
    \end{equation}
    is an isomorphism of posets.
    \smallskip

    Recall that the Möbius function $\mu$ of $\Tbb(k)$ satisfies, for
    all $a, a' \in \Tbb(k)$,
    \begin{equation}
        \mu(a, a') =
        \begin{cases}
            1 & \mbox{if } a' = a, \\
            -1 & \mbox{if } a' = a + 1, \\
            0 & \mbox{otherwise}.
        \end{cases}
    \end{equation}
    Moreover, since by~\cite{Sta11}, the Möbius function of a Cartesian
    product poset is the product of the Möbius functions of the posets
    involved in the product, through the isomorphism $\phi^{(\gamma)}_x$,
    we obtain that when $x'$ is in $\Incr_\gamma(x)$,
    $\mu_\gamma(x, x') = (-1)^{\Hamming(x, x')}$ and that when $x'$ is
    not in $\Incr_\gamma$, $\mu_\gamma(x, x') = 0$. Therefore,
    \eqref{equ:expression_directe_base_K} is established.
\end{proof}
\medskip

\begin{Lemme} \label{lem:somme_alternee_incr}
    For any integer $\gamma \geq 0$, any word $x$ of $\Dias_\gamma$, and
    any nonempty set $J$ of positions of letters of $x$ that are greater
    than $0$ and smaller than $\gamma$,
    \begin{equation}
        \sum_{x' \in \Incr_\gamma(x, J)} (-1)^{\Hamming(x, x')} = 0.
    \end{equation}
\end{Lemme}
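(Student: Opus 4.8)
The plan is to identify $\Incr_\gamma(x, J)$ explicitly with the power set of $J$, and then to recognize the resulting alternating sum as a binomial expansion. First I would observe that, by the very definition of $J$, every position $j \in J$ carries a letter $x_j$ of $x$ satisfying $0 < x_j < \gamma$; hence $x_j + 1$ still belongs to $[\gamma]$, so incrementing $x_j$ always produces an admissible letter. Consequently, for each subset $K \subseteq J$ there is a well-defined word $x^{(K)}$, whose $j$th letter equals $x_j + 1$ if $j \in K$ and $x_j$ otherwise, and $x^{(K)}$ lies in $\Dias_\gamma$ by Proposition~\ref{prop:elements_dias_gamma} since it still contains exactly one occurrence of $0$ (the position of the $0$ of $x$ is not in $J$). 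The assignment $K \mapsto x^{(K)}$ is a bijection from the set of subsets of $J$ onto $\Incr_\gamma(x, J)$: it is surjective by the definition of $\Incr_\gamma(x, J)$, and injective because $K$ is recovered from $x^{(K)}$ as exactly the set of positions at which $x^{(K)}$ differs from $x$.

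Next I would note that, through this bijection, $\Hamming(x, x^{(K)}) = \# K$ for every $K \subseteq J$. Therefore the sum to be evaluated rewrites as $\sum_{K \subseteq J} (-1)^{\# K}$. Grouping the subsets of $J$ by their cardinality and applying the binomial theorem yields $\sum_{k = 0}^{\# J} \binom{\# J}{k} (-1)^k = (1 - 1)^{\# J} = 0$, where the final equality uses the hypothesis that $J$ is nonempty, that is, $\# J \geq 1$.

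There is no genuine obstacle here: the single point that deserves a (routine) verification is that the constraint defining $J$ guarantees that each chosen position is actually incrementable, so that $\Incr_\gamma(x, J)$ is indexed by \emph{all} subsets of $J$ rather than by a proper subfamily of them. Once this is in place, the statement reduces to the elementary identity $(1 - 1)^{\# J} = 0$.
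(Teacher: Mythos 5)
Your proof is correct and is essentially the argument the paper has in mind: the paper disposes of the lemma in one line, ``by induction on the nonzero cardinality of $J$'', which amounts to the same elementary fact you prove directly, namely that once every position of $J$ is guaranteed incrementable, $\Incr_\gamma(x, J)$ is indexed by the subsets of $J$ and the signed sum is $(1-1)^{\# J} = 0$. Your version simply makes the bijection with the power set and the binomial identity explicit, which is a fine (and arguably more transparent) way to write the same routine verification.
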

\begin{proof}
    The statement of the lemma follows by induction on the nonzero
    cardinality of $J$.
\end{proof}
\medskip

To compute a direct expression for the partial composition of
$\Dias_\gamma$ over the $\Ksf$-basis, we have to introduce two notations.
If $x$ is a word of $\Dias_\gamma$ of length nonsmaller than $2$, we
denote by $\min(x)$ the smallest letter of $x$ among its letters
different from $0$. Proposition~\ref{prop:elements_dias_gamma} ensures
that $\min(x)$ is well-defined. Moreover, for all words $x$ and $y$ of
$\Dias_\gamma$, a position $i$ such that $x_i \ne 0$, and $a \in [\gamma]$,
we denote by $x \circ_{a, i} y$ the word $x \circ_i y$ in which the $0$
coming from $y$ is replaced by $a$ instead of $x_i$.
\medskip

\begin{Theoreme} \label{thm:composition_base_K}
    For any integer $\gamma \geq 0$, the partial composition of
    $\Dias_\gamma$ over the $\Ksf$-basis satisfies, for all words $x$
    and $y$ of $\Dias_\gamma$ of arities nonsmaller than $2$,
    \begin{equation}
        \Ksf^{(\gamma)}_x \circ_i \Ksf^{(\gamma)}_y =
        \begin{cases}
            \Ksf^{(\gamma)}_{x \circ_i y} & \mbox{if } \min(y) > x_i, \\
            \sum_{a \in [x_i, \gamma]} \Ksf^{(\gamma)}_{x \circ_{a, i} y}
                & \mbox{if } \min(y) = x_i, \\
            0 & \mbox{otherwise (} \min(y) < x_i \mbox{)}.
        \end{cases}
    \end{equation}
\end{Theoreme}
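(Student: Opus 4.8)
The plan is to expand both sides of the claimed identity in the word basis of $\Dias_\gamma$ via Lemma~\ref{lem:expression_directe_base_K}, to carry out the partial composition termwise using the formula for $\T \Mca_\gamma$ of Section~\ref{subsec:monoides_vers_operades}, and then to compare. Writing $n$ and $m$ for the arities of $x$ and $y$ and $r$ for the position of the unique $0$ of $y$ (Proposition~\ref{prop:elements_dias_gamma}), one gets
\[
    \Ksf^{(\gamma)}_x \circ_i \Ksf^{(\gamma)}_y
    = \sum_{\substack{x' \in \Incr_\gamma(x) \\ y' \in \Incr_\gamma(y)}}
    (-1)^{\Hamming(x, x') + \Hamming(y, y')}\,
    x'_1 \cdots x'_{i - 1}\,
    (x'_i \Max y'_1) \cdots (x'_i \Max y'_m)\,
    x'_{i + 1} \cdots x'_n .
\]
Everything rests on the middle factor $(x'_i \Max y'_1) \cdots (x'_i \Max y'_m)$. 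Since $x'_i \leq x_i + 1$, since $y'_j \leq y_j + 1$, and since every nonzero letter $y_j$ of $y$ satisfies $y_j \geq \min(y)$, one has $x'_i \Max y'_j = y'_j$ whenever $y_j > x_i$, one has $x'_i \Max y'_j = x'_i$ whenever $y_j < x_i$ (because then $y'_j \leq y_j + 1 \leq x_i \leq x'_i$), and $x'_i \Max 0 = x'_i$ at position $r$.

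First I would settle the case $\min(y) > x_i$. Then every nonzero letter of $y$ is larger than $x_i$, so the middle factor equals $y'_1 \cdots y'_{r - 1}\, x'_i\, y'_{r + 1} \cdots y'_m$; hence $x' \circ_i y'$ is deduced from $z := x \circ_i y$ by applying the increments recorded by $x'$ to the letters inherited from $x$ (the letter $x_i$ now sitting in the slot freed by the $0$ of $y$) and the increments recorded by $y'$ to the nonzero letters of $y$. As the incrementable positions of $z$ are exactly those carrying a letter of $x$ together with those carrying a nonzero letter of $y$, the map $(x', y') \mapsto x' \circ_i y'$ is a bijection onto $\Incr_\gamma(z)$ with $\Hamming(x, x') + \Hamming(y, y') = \Hamming(z, x' \circ_i y')$, so the sum telescopes to $\Ksf^{(\gamma)}_z$.

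Next I would treat the case $\min(y) < x_i$, whose claimed value is $0$. Fix a position $j$ with $y_j = \min(y)$; then $0 < y_j < x_i \leq \gamma$, so swapping the $j$-th letter of $y'$ between $\min(y)$ and $\min(y) + 1$ is a fixed-point-free involution of $\Incr_\gamma(y)$. By the observations above, $x'_i \Max y'_j = x'_i$ for both values of this letter, so the involution fixes the word $x' \circ_i y'$ while reversing the sign $(-1)^{\Hamming(y, y')}$; pairing the summands accordingly makes the whole sum vanish.

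The hard part is the remaining case $\min(y) = x_i$ (note that $x_i \neq 0$ here, so the words $x \circ_{a, i} y$ are defined); I may moreover assume $x_i < \gamma$, the case $x_i = \gamma$ being analogous to the first one. Write $q$ for the position in $z := x \circ_i y$ of the slot freed by the $0$ of $y$, put $S := \{j \in [m] : y_j = \min(y)\}$ (which is nonempty), and let $P$ be the set of positions of $z$ distinct from $q$. On the right-hand side, the words $x \circ_{a, i} y$, $a \in [x_i, \gamma]$, coincide outside position $q$, where that word carries the letter $a$; hence in $\sum_{a \in [x_i, \gamma]} \Ksf^{(\gamma)}_{x \circ_{a, i} y}$ any monomial carrying a letter $c > x_i$ at $q$ occurs exactly twice, once from a non-increment at $q$ of $x \circ_{c, i} y$ and once from an increment at $q$ of $x \circ_{c - 1, i} y$, and these cancel, while the monomials carrying $x_i$ at $q$ survive, leaving $\sum_{z' \in \Incr_\gamma(z, P)} (-1)^{\Hamming(z, z')} z'$. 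On the left-hand side, I would group the summands by their values at all positions outside $S \cup \{q\}$; for such a fixed choice, and writing $\epsilon \in \{0, 1\}$ for whether $x'_i = x_i + 1$ and $\delta_j \in \{0, 1\}$ for whether $y'_j = x_i + 1$ (for $j \in S$), the letter of $x' \circ_i y'$ is $x_i + \epsilon$ at $q$ and $x_i + (\epsilon \vee \delta_j)$ at each $j \in S$. The contribution with $\epsilon = 1$ forces all these letters to be $x_i + 1$ regardless of the $\delta_j$, so its total sign $\sum_{(\delta_j) \in \{0, 1\}^S} (-1)^{1 + \sum_j \delta_j}$ vanishes since $S \neq \emptyset$ (Lemma~\ref{lem:somme_alternee_incr}); the contribution with $\epsilon = 0$ produces, for each assignment of values in $\{x_i, x_i + 1\}$ to the positions of $S$, a unique $(\delta_j)$ with sign $(-1)^{\#\{j \in S : \delta_j = 1\}}$. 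Summing over the fixed choices, the left-hand side equals $\sum_{z' \in \Incr_\gamma(z, P)} (-1)^{\Hamming(z, z')} z'$ as well, which matches the right-hand side and completes the proof.
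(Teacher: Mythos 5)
Your proof is correct. It shares the paper's broad strategy---expand through Lemma~\ref{lem:expression_directe_base_K}, split into the three cases according to $\min(y)$ versus $x_i$, and eliminate unwanted terms by sign-reversing cancellations---but the bookkeeping is genuinely different: the paper converts the word-basis expansion of $\Ksf^{(\gamma)}_x \circ_i \Ksf^{(\gamma)}_y$ back into the $\Ksf$-basis by Möbius inversion~\eqref{equ:mots_vers_base_K} and then computes the coefficient of each $\Ksf^{(\gamma)}_z$ with $x \circ_i y \OrdDias_\gamma z$, using Lemma~\ref{lem:somme_alternee_incr} in the last two cases, whereas you remain entirely in the word basis and therefore also expand the claimed right-hand side. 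This buys you a cleaner case $\min(y) > x_i$, where $(x', y') \mapsto x' \circ_i y'$ is an explicit sign-compatible bijection onto $\Incr_\gamma(x \circ_i y)$ and no alternating-sum lemma is needed, at the price of an extra computation when $\min(y) = x_i$: the telescoping of $\sum_{a \in [x_i, \gamma]} \Ksf^{(\gamma)}_{x \circ_{a, i} y}$ at the position freed by the $0$ of $y$, which the paper's coefficient extraction sidesteps; your case $\min(y) < x_i$ is the same fixed-point-free involution as the paper's compensation argument. Two minor points, neither a gap: your phrase ``the incrementable positions of $z$ are exactly those carrying a letter of $x$ together with those carrying a nonzero letter of $y$'' should exclude letters equal to $0$ or $\gamma$ (the intended correspondence between increments is nonetheless the right one and is what your bijection uses), and the dispatched subcase $x_i = \gamma$, $\min(y) = x_i$ is indeed immediate since then $\Incr_\gamma(y) = \{y\}$ and $[x_i, \gamma]$ is a singleton.
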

\begin{proof}
    First of all, by Lemma~\ref{lem:expression_directe_base_K} together
    with~\eqref{equ:mots_vers_base_K}, we obtain
    \begin{equation} \label{equ:demo_composition_base_K} \begin{split}
        \Ksf^{(\gamma)}_x \circ_i \Ksf^{(\gamma)}_y
            & = \sum_{\substack{x' \in \Incr_\gamma(x) \\
                                y' \in \Incr_\gamma(y)}}
            (-1)^{\Hamming(x, x') + \Hamming(y, y')}
            \left(\sum_{x' \circ_i y' \OrdDias_\gamma z}
            \Ksf^{(\gamma)}_z\right) \\
            & = \sum_{x \circ_i y \OrdDias_\gamma z} \;
            \sum_{\substack{x' \in \Incr_\gamma(x) \\
                                  y' \in \Incr_\gamma(y) \\
                                  x' \circ_i y' \OrdDias_\gamma z}}
            (-1)^{\Hamming(x, x') + \Hamming(y, y')}
            \, \Ksf^{(\gamma)}_z.
    \end{split} \end{equation}
    \smallskip

    Let us denote by $n$ (resp. $m$) the arity of $x$ (resp. $y$) and
    let $z$ be a word of $\Dias_\gamma$ such that
    $x \circ_i y \OrdDias_\gamma z$. Let $x' \in \Incr_\gamma(x)$ and
    $y' \in \Incr_\gamma(y)$. We have, by definition of the partial
    composition of $\Dias_\gamma$,
    \begin{equation}
        x \circ_i y =
        x_1 \dots x_{i - 1}
            \, t_1 \dots t_{r - 1} \, x_i \, t_{r + 1} \dots t_m \,
        x_{i + 1} \dots x_n,
    \end{equation}
    and
    \begin{equation}
        x' \circ_i y' =
        x'_1 \dots x'_{i - 1}
            \, t'_1 \dots t'_{r - 1} \, x'_i \, t'_{r + 1} \dots t'_m \,
        x'_{i + 1} \dots x'_n,
    \end{equation}
    where $r$ denotes the position of the only, by
    Proposition~\ref{prop:elements_dias_gamma}, $0$ of $y$ and for all
    $j \in [m] \setminus \{r\}$, $t_j := x_i \Max y_j$ and
    $t'_j := x'_i \Max y'_j$. By~\eqref{equ:demo_composition_base_K},
    the pair $(x', y')$ contributes to the coefficient of $\Ksf^{(\gamma)}_z$
    in~\eqref{equ:demo_composition_base_K} if and only if
    $x \circ_i y \OrdDias_\gamma x' \circ_i y' \OrdDias z$. To compute
    this coefficient, we have three cases to consider following the value
    of $\min(y)$ compared to the value of~$x_i$.
    \smallskip

    \begin{enumerate}[label={\it Case \arabic*.},fullwidth]
        \item Assume first that $\min(y) < x_i$. Then, there is at least
        a $s \in [m] \setminus \{r\}$ such that $y_s < x_i$. This implies
        that $t_s = x_i$ and that $y'_s$ has no influence on $t'_s$ and
        then, on $x' \circ_i y'$. Thus, the word
        $y'' := y'_1 \dots y'_{s - 1} a y'_{s + 1} \dots y'_m$ where $a$
        is the only possible letter such that $y'' \in \Incr_\gamma(y)$
        and $a \ne y'_s$ satisfies $x' \circ_i y'' = x' \circ_i y'$.
        Therefore, since $\Hamming(y', y'') = 1$, the contribution of
        the pair $(x', y')$ for the coefficient of $\Ksf^{(\gamma)}_z$
        in~\eqref{equ:demo_composition_base_K} is compensated by the
        contribution of the pair $(x', y'')$. This shows that this
        coefficient is $0$ and hence,
        $\Ksf^{(\gamma)}_x \circ_i \Ksf^{(\gamma)}_y = 0$.
        \smallskip

        \item \label{item:composition_base_K_cas_2}
        Assume now that $\min(y) > x_i$. Then, for all
        $j \in [m] \setminus \{r\}$, we have $y_j > x_i$ and thus,
        $t_j = y_j$. When $z = x \circ_i y$, we necessarily have $x' = x$
        and $y' = y$. Hence, the coefficient of $\Ksf^{(\gamma)}_{x \circ_i y}$
        in~\eqref{equ:demo_composition_base_K} is $1$. Else, when
        $z \ne x \circ_i y$, we have
        $x' \circ_i y' \in \Incr_\gamma(x \circ_i y, J)$, where $J$ is
        the nonempty set of the positions of letters of $z$ different
        from letters of $x \circ_i y$. Now,
        from~\eqref{equ:demo_composition_base_K}, the coefficient of
        $\Ksf^{(\gamma)}_z$ in~\eqref{equ:demo_composition_base_K} is
        \begin{equation}
            \sum_{x' \circ_i y' \in \Incr_\gamma(x \circ_i y, J)}
            (-1)^{\Hamming(x, x') + \Hamming(y, y')}.
        \end{equation}
        Lemma~\ref{lem:somme_alternee_incr} implies that this coefficient
        is $0$. This shows that
        $\Ksf^{(\gamma)}_x \circ_i \Ksf^{(\gamma)}_y =
        \Ksf^{(\gamma)}_{x \circ_i y}$.
        \smallskip

        \item The last case occurs when $\min(y) = x_i$. Then, for all
        $j \in [m] \setminus \{r\}$, we have $y_j \geq x_i$ and thus,
        $t_j = y_j$. Moreover, there is at least a $s \in [m] \setminus \{r\}$
        such that $y_s = x_i$. When $z = x \circ_{a, i} y$ with
        $a \in [x_i, \gamma]$, we necessarily have $x' = x$ and $y' = y$.
        Therefore, for all $a \in [x_i, \gamma]$, the
        $\Ksf^{(\gamma)}_{x \circ_{a, i}}$ have coefficient~$1$
        in~\eqref{equ:demo_composition_base_K}. The same argument as the
        one exposed for \ref{item:composition_base_K_cas_2} shows that
        when $z \ne x \circ_{a, i} y$ for all $a \in [x_i, \gamma]$, the
        coefficient of $\Ksf^{(\gamma)}_z$ is zero. Hence,
        $\Ksf^{(\gamma)}_x \circ_i \Ksf^{(\gamma)}_y =
        \sum_{a \in [x_i, \gamma]} \Ksf^{(\gamma)}_{x \circ_{a, i} y}$.
    \end{enumerate}
\end{proof}
\medskip

We have for instance
\begin{equation}
    \Ksf^{(5)}_{20413} \circ_1 \Ksf^{(5)}_{304} = \Ksf^{(5)}_{3240413},
\end{equation}
\begin{equation}
    \Ksf^{(5)}_{20413} \circ_2 \Ksf^{(5)}_{304} = \Ksf^{(5)}_{2304413},
\end{equation}
\begin{equation}
    \Ksf^{(5)}_{20413} \circ_3 \Ksf^{(5)}_{304} = 0,
\end{equation}
\begin{equation}
    \Ksf^{(5)}_{20413} \circ_4 \Ksf^{(5)}_{304} = \Ksf^{(5)}_{2043143},
\end{equation}
\begin{equation}
    \Ksf^{(5)}_{20413} \circ_5 \Ksf^{(5)}_{304}
        = \Ksf^{(5)}_{2041334} + \Ksf^{(5)}_{2041344}
        + \Ksf^{(5)}_{2041354}.
\end{equation}
\medskip

Theorem~\ref{thm:composition_base_K} implies in particular that the
structure coefficients of the partial composition of $\Dias_\gamma$ over
the $\Ksf$-basis are $0$ or $1$. It is possible to define another bases
of $\Dias_\gamma$ by reversing in~\eqref{equ:base_K_vers_mots} the
relation $\OrdDias_\gamma$ and by suppressing or keeping the Möbius
function $\mu_\gamma$. This gives obviously rise to three other bases.
It worth to note that, as small computations reveal, over all these
additional bases, the structure coefficients of the partial composition
of $\Dias_\gamma$ can be negative or different from $1$. This observation
makes the $\Ksf$-basis even more particular and interesting. It has some
other properties, as next section will show.
\medskip

\subsubsection{Alternative presentation}%
\label{subsubsec:presentation_dias_gamma_alternative}
The $\Ksf$-basis introduced in the previous section leads to state a new
presentation for $\Dias_\gamma$ in the following way.
\medskip

For any integer $\gamma \geq 0$, let $\GDiasA_a$ and $\DDiasA_a$,
$a \in [\gamma]$, be the elements of $\OpLibre\left(\GenDias\right)(2)$
defined by
\begin{subequations}
\begin{equation}
    \GDiasA_a :=
    \begin{cases}
        \GDias_\gamma & \mbox{if } a = \gamma, \\
        \GDias_a - \GDias_{a + 1} & \mbox{otherwise},
    \end{cases}
\end{equation}
and
\begin{equation}
    \DDiasA_a :=
    \begin{cases}
        \DDias_\gamma & \mbox{if } a = \gamma, \\
        \DDias_a - \DDias_{a + 1} & \mbox{otherwise}.
    \end{cases}
\end{equation}
\end{subequations}
Then, since for all $a \in [\gamma]$ we have
\begin{subequations}
\begin{equation}
    \GDias_a = \sum_{a \leq b \in [\gamma]} \GDiasA_b
\end{equation}
and
\begin{equation}
    \DDias_a = \sum_{a \leq b \in [\gamma]} \DDiasA_b,
\end{equation}
\end{subequations}
by triangularity, the family
$\GenDias' := \{\GDiasA_a, \DDiasA_a : a \in [\gamma]\}$
forms a basis of $\OpLibre\left(\GenDias\right)(2)$ and then, generates
$\OpLibre\left(\GenDias\right)$ as an operad. This change of basis
from $\OpLibre\left(\GenDias\right)$ to $\OpLibre(\GenDias')$
comes from the change of basis from the usual basis of $\Dias_\gamma$
to the $\Ksf$-basis. Let us now express a presentation of $\Dias_\gamma$
through the family $\GenDias'$.
\medskip

\begin{Proposition} \label{prop:presentation_alternative_dias_gamma}
    For any integer $\gamma \geq 0$, the operad $\Dias_\gamma$ admits
    the following presentation. It is generated by $\GenDias'$ and its
    space of relations is $\RelLibre'_{\Dias_\gamma}$ is generated by
    \begin{subequations}
    \begin{equation} \label{equ:relation_dias_gamma_1_alternative}
        \GDiasA_a \circ_1 \DDiasA_{a'} -
        \DDiasA_{a'} \circ_2 \GDiasA_a,
        \qquad a, a' \in [\gamma],
    \end{equation}
    \begin{equation} \label{equ:relation_dias_gamma_2_alternative}
        \DDiasA_b \circ_1 \DDiasA_a, \qquad a < b \in [\gamma],
    \end{equation}
    \begin{equation} \label{equ:relation_dias_gamma_3_alternative}
        \GDiasA_b \circ_2 \GDiasA_a, \qquad a < b \in [\gamma],
    \end{equation}
    \begin{equation} \label{equ:relation_dias_gamma_4_alternative}
        \DDiasA_b \circ_1 \GDiasA_a, \qquad a < b \in [\gamma],
    \end{equation}
    \begin{equation} \label{equ:relation_dias_gamma_5_alternative}
        \GDiasA_b \circ_2 \DDiasA_a, \qquad a < b \in [\gamma],
    \end{equation}
    \begin{equation} \label{equ:relation_dias_gamma_6_alternative}
        \DDiasA_a \circ_1 \DDiasA_b
        - \DDiasA_b \circ_2 \DDiasA_a,
        \qquad a < b \in [\gamma],
    \end{equation}
    \begin{equation} \label{equ:relation_dias_gamma_7_alternative}
        \GDiasA_b \circ_1 \GDiasA_a
        - \GDiasA_a \circ_2 \GDiasA_b,
        \qquad a < b \in [\gamma],
    \end{equation}
    \begin{equation} \label{equ:relation_dias_gamma_8_alternative}
        \DDiasA_a \circ_1 \GDiasA_b
        - \DDiasA_a \circ_2 \DDiasA_b,
        \qquad a < b \in [\gamma],
    \end{equation}
    \begin{equation} \label{equ:relation_dias_gamma_9_alternative}
        \GDiasA_a \circ_1 \GDiasA_b
        - \GDiasA_a \circ_2 \DDiasA_b,
        \qquad a < b \in [\gamma],
    \end{equation}
    \begin{equation} \label{equ:relation_dias_gamma_10_alternative}
        \DDiasA_a \circ_1 \DDiasA_a -
        \left(\sum_{a \leq b \in [\gamma]} \DDiasA_a \circ_2 \DDiasA_b\right),
        \qquad a \in [\gamma],
    \end{equation}
    \begin{equation} \label{equ:relation_dias_gamma_11_alternative}
        \left(\sum_{a \leq b \in [\gamma]} \GDiasA_a \circ_1 \GDiasA_b\right)
        - \GDiasA_a \circ_2 \GDiasA_a,
        \qquad a \in [\gamma],
    \end{equation}
    \begin{equation} \label{equ:relation_dias_gamma_12_alternative}
        \DDiasA_a \circ_1 \GDiasA_a
        - \left(\sum_{a \leq b \in [\gamma]} \DDiasA_b \circ_2 \DDiasA_a\right),
        \qquad a \in [\gamma],
    \end{equation}
    \begin{equation} \label{equ:relation_dias_gamma_13_alternative}
        \left(\sum_{a \leq b \in [\gamma]} \GDiasA_b \circ_1 \GDiasA_a\right)
        - \GDiasA_a \circ_2 \DDiasA_a,
        \qquad a \in [\gamma].
    \end{equation}
    \end{subequations}
\end{Proposition}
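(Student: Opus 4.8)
The plan is to deduce this presentation from the one of
    Theorem~\ref{thm:presentation_dias_gamma} by performing the triangular
    change of generators from $\GenDias$ to $\GenDias'$ described just
    above. Since $\GenDias'$ is a basis of
    $\OpLibre\left(\GenDias\right)(2)$, the operads
    $\OpLibre\left(\GenDias\right)$ and $\OpLibre(\GenDias')$ are the
    same; and since $\Dias_\gamma$ is quadratic and binary
    (Theorem~\ref{thm:presentation_dias_gamma}), it is presented by
    $\GenDias'$ together with a space of degree-$2$ relations, namely the
    degree-$2$ component of the kernel of $\Mot_\gamma$, which is nothing
    but $\RelDias$ now regarded inside
    $\OpLibre(\GenDias')(3) = \OpLibre\left(\GenDias\right)(3)$. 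As a
    quadratic operad ideal is determined by its degree-$2$ part, it thus
    suffices to prove that the subspace $\RelLibre'_{\Dias_\gamma}$
    spanned by \eqref{equ:relation_dias_gamma_1_alternative}---%
    \eqref{equ:relation_dias_gamma_13_alternative} coincides with
    $\RelDias$.

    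First I would record the key translation. The operad morphism
    $\Mot_\gamma : \OpLibre\left(\GenDias\right) \to \Dias_\gamma$ of
    Lemma~\ref{lem:mot_gamma_morphisme} sends the corolla $\GDias_a$ to
    the word $0a$ and $\DDias_a$ to the word $a0$, so by
    Lemma~\ref{lem:expression_directe_base_K} it sends $\GDiasA_a$ to
    $\Ksf^{(\gamma)}_{0a}$ and $\DDiasA_a$ to $\Ksf^{(\gamma)}_{a0}$.
    Consequently, applying $\Mot_\gamma$ to a degree-$2$ expression in
    the elements of $\GenDias'$ amounts to substituting
    $\GDiasA_c \mapsto \Ksf^{(\gamma)}_{0c}$ and
    $\DDiasA_c \mapsto \Ksf^{(\gamma)}_{c0}$ and then invoking
    Theorem~\ref{thm:composition_base_K}; note that
    $\min(0c) = \min(c0) = c$, so the comparison occurring in that
    theorem is always against the first or the second letter of a
    length-$2$ word, that is against $0$ or a positive letter. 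Carrying
    this out, I would check one family at a time that each of
    \eqref{equ:relation_dias_gamma_1_alternative}---%
    \eqref{equ:relation_dias_gamma_13_alternative} is mapped to $0$ by
    $\Mot_\gamma$; for instance $\Mot_\gamma$ sends both
    $\GDiasA_a \circ_1 \GDiasA_b$ and $\GDiasA_a \circ_2 \DDiasA_b$ to
    $\Ksf^{(\gamma)}_{0ba}$ when $a < b$, which kills
    \eqref{equ:relation_dias_gamma_9_alternative}, and it sends
    $\DDiasA_a \circ_1 \DDiasA_a$ to
    $\sum_{a \leq b \in [\gamma]} \Ksf^{(\gamma)}_{ab0}$ and each
    $\DDiasA_a \circ_2 \DDiasA_b$ to $\Ksf^{(\gamma)}_{ab0}$, which kills
    \eqref{equ:relation_dias_gamma_10_alternative}, the telescoping sum
    here arising from the middle case of
    Theorem~\ref{thm:composition_base_K}. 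Since $\Dias_\gamma$ is
    quadratic, the kernel of $\Mot_\gamma$ in arity $3$ is exactly
    $\RelDias$, hence $\RelLibre'_{\Dias_\gamma} \subseteq \RelDias$.

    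To upgrade this inclusion to an equality I would count dimensions.
    The space $\OpLibre\left(\GenDias\right)(3)$ has a basis consisting of
    the $2(2\gamma)^2 = 8\gamma^2$ syntax trees with two internal nodes
    labeled among the $2\gamma$ elements of $\GenDias$, and
    $\dim \Dias_\gamma(3) = 3\gamma^2$ by
    \eqref{equ:serie_hilbert_dias_gamma}, so the quadraticity of
    $\Dias_\gamma$ forces $\dim \RelDias = 5\gamma^2$. On the other hand
    the list \eqref{equ:relation_dias_gamma_1_alternative}---%
    \eqref{equ:relation_dias_gamma_13_alternative} comprises
    $\gamma^2 + 8\binom{\gamma}{2} + 4\gamma = 5\gamma^2$ elements, and
    these are linearly independent, as one checks by inspecting the
    syntax trees occurring in them (the single trees of
    \eqref{equ:relation_dias_gamma_2_alternative}---%
    \eqref{equ:relation_dias_gamma_5_alternative} occur in no other
    relation, and the remaining families are separated similarly).
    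Combined with the inclusion of the previous paragraph, this gives
    $\RelLibre'_{\Dias_\gamma} = \RelDias$, whence the claimed
    presentation. (Alternatively, the reverse inclusion
    $\RelDias \subseteq \RelLibre'_{\Dias_\gamma}$ can be obtained at once
    by substituting $\GDias_a = \sum_{a \leq b \in [\gamma]} \GDiasA_b$
    and $\DDias_a = \sum_{a \leq b \in [\gamma]} \DDiasA_b$ into the
    relations \eqref{equ:relation_dias_gamma_1_concise}---%
    \eqref{equ:relation_dias_gamma_5_concise}.) The main difficulty I
    anticipate is the bookkeeping of the second paragraph: there are
    $8\gamma^2$ compositions to evaluate with
    Theorem~\ref{thm:composition_base_K} and to match against the
    thirteen families, the delicate points being the middle case
    $\min(y) = x_i$, responsible for the telescoping sums in
    \eqref{equ:relation_dias_gamma_10_alternative}---%
    \eqref{equ:relation_dias_gamma_13_alternative}, and the careful use
    of the notation $x \circ_{a, i} y$.
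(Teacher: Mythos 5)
Your proposal is correct and follows essentially the same route as the paper's proof: you show each listed element lies in the kernel of $\Mot_\gamma$ by using $\Mot_\gamma(\GDiasA_a) = \Ksf^{(\gamma)}_{0a}$, $\Mot_\gamma(\DDiasA_a) = \Ksf^{(\gamma)}_{a0}$ and the composition rule of Theorem~\ref{thm:composition_base_K}, giving $\RelLibre'_{\Dias_\gamma} \subseteq \RelDias$, and then conclude equality from linear independence of the $5\gamma^2$ listed elements together with $\dim \RelDias = 5\gamma^2$. The only (harmless) additions are your explicit derivation of $\dim\RelDias = 5\gamma^2$ from quadraticity and the Hilbert series, and the parenthetical substitution argument for the reverse inclusion.
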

\begin{proof}
    Let us show that $\RelLibre'_{\Dias_\gamma}$ is equal to the space
    of relations $\RelLibre_{\Dias_\gamma}$ of $\Dias_\gamma$ defined in
    the statement of Theorem~\ref{thm:presentation_dias_gamma}. First of
    all, recall that the map
    $\Mot_\gamma : \OpLibre\left(\GenDias\right) \to \Dias_\gamma$
    defined in Section~\ref{subsubsec:arbres_vers_mots} satisfies
    $\Mot_\gamma(\GDias_a) = 0a$ and $\Mot_\gamma(\DDias_a) = a0$ for
    all $a \in [\gamma]$. By Theorem~\ref{thm:presentation_dias_gamma},
    for any $x \in  \OpLibre\left(\GenDias\right)(3)$, $x$ is in
    $\RelLibre_{\Dias_\gamma}$ if and only if $\Mot_\gamma(x) = 0$.
    \smallskip

    Besides, by definition of $\GDiasA_a$, $\DDiasA_a$, $a \in [\gamma]$,
    and by making use of the $\Ksf$-basis of $\Dias_\gamma$, we have
    $\Mot_\gamma(\GDiasA_a) = \Ksf^{(\gamma)}_{0a}$ and
    $\Mot_\gamma(\DDiasA_a) = \Ksf^{(\gamma)}_{a0}$. By using the partial
    composition rules for $\Dias_\gamma$ over the $\Ksf$-basis of
    Theorem~\ref{thm:composition_base_K}, straightforward computations
    show that $\Mot_\gamma(x) = 0$ for all elements $x$
    among~\eqref{equ:relation_dias_gamma_1_alternative}---%
    \eqref{equ:relation_dias_gamma_13_alternative}.
    This implies that $\RelLibre'_{\Dias_\gamma}$ is a subspace
    of $\RelLibre_{\Dias_\gamma}$.
    \smallskip

    Now, one can observe that
    elements~\eqref{equ:relation_dias_gamma_1_alternative}---%
    \eqref{equ:relation_dias_gamma_13_alternative} are
    linearly independent. Then, $\RelLibre'_{\Dias_\gamma}$ has
    dimension $5 \gamma^2$ which is also, by
    Theorem~\ref{thm:presentation_dias_gamma}, the dimension of
    $\RelLibre_{\Dias_\gamma}$. Hence, $\RelLibre'_{\Dias_\gamma}$ and
    $\RelLibre_{\Dias_\gamma}$ are equal. The statement of the
    proposition follows.
\end{proof}
\medskip

Despite the apparent complexity of the presentation of $\Dias_\gamma$
exhibited by Proposition~\ref{prop:presentation_alternative_dias_gamma},
as we will see in Section~2 of~\cite{GirII}, the
Koszul dual of $\Dias_\gamma$ computed from this presentation has a very
simple and manageable expression.
\medskip

\section{Pluriassociative algebras} \label{sec:algebres_dias_gamma}
We now focus on algebras over $\gamma$-pluriassociative operads.
For this purpose, we construct free $\Dias_\gamma$-algebras over one
generator, and define and study two notions of units for
$\Dias_\gamma$-algebras. We end this section by introducing a convenient
way to define $\Dias_\gamma$-algebras and give several examples of such
algebras.
\medskip

\subsection{Category of pluriassociative algebras and free objects}
Let us study the category of $\Dias_\gamma$-algebras and the units for
algebras in this category.
\medskip

\subsubsection{Pluriassociative algebras}
We call {\em $\gamma$-pluriassociative algebra} any
$\Dias_\gamma$-algebra. From the presentation of $\Dias_\gamma$ provided
by Theorem~\ref{thm:presentation_dias_gamma}, any
$\gamma$-pluriassociative algebra is a vector space endowed with linear
operations $\GDias_a, \DDias_a$, $a \in [\gamma]$, satisfying
the relations encoded by~\eqref{equ:relation_dias_gamma_1_concise}---%
\eqref{equ:relation_dias_gamma_5_concise}.
\medskip

\subsubsection{General definitions}
Let $\Pca$ be a $\gamma$-pluriassociative algebra. We say that $\Pca$
is {\em commutative} if for all $x, y \in \Pca$ and $a \in [\gamma]$,
$x \GDias_a y = y \DDias_a x$. Besides, $\Pca$ is {\em pure} for all
$a, a' \in [\gamma]$, $a \ne a'$ implies $\GDias_a \ne \GDias_{a'}$ and
$\DDias_a \ne \DDias_{a'}$.
\medskip

Given a subset $C$ of $[\gamma]$, one can keep on the vector space $\Pca$
only the operations $\GDias_a$ and $\DDias_a$ such that $a \in C$.
By renumbering the indexes of these operations from $1$ to $\# C$ by
respecting their former relative numbering, we obtain a
$\# C$-pluriassociative algebra. We call it the
{\em $\# C$-pluriassociative subalgebra induced by} $C$ of $\Pca$.
\medskip

\subsubsection{Free pluriassociative algebras}
\label{subsubsec:algebre_dias_gamma_libre}
Recall that $\AlgLibre_{\Dias_\gamma}$ denotes the free
$\Dias_\gamma$-algebra over one generator. By definition,
$\AlgLibre_{\Dias_\gamma}$ is the linear span of the set of the words on
$\{0\} \cup [\gamma]$ with exactly one occurrence of~$0$. Let us endow
this space with the linear operations
\begin{equation}
    \GDias_a, \DDias_a :
    \AlgLibre_{\Dias_\gamma} \otimes \AlgLibre_{\Dias_\gamma}
    \to \AlgLibre_{\Dias_\gamma},
    \qquad
    a \in [\gamma],
\end{equation}
satisfying, for any such words $u$ and $v$,
\begin{subequations}
\begin{equation}
    u \GDias_a v := u \: \Augm_a(v)
\end{equation}
and
\begin{equation}
    u \DDias_a v := \Augm_a(u) \: v,
\end{equation}
\end{subequations}
where $\Augm_a(u)$ (resp. $\Augm_a(v)$) is the word obtained by replacing
in $u$ (resp. $v$) any occurrence of a letter smaller than $a$ by $a$.
\medskip

\begin{Proposition} \label{prop:algebre_dias_gamma_libre}
    For any integer $\gamma \geq 0$, the vector space
    $\AlgLibre_{\Dias_\gamma}$ of nonempty words on $\{0\} \cup [\gamma]$
    containing exactly one occurrence of $0$ endowed with the operations
    $\GDias_a$, $\DDias_a$, $a \in [\gamma]$, is the free
    $\gamma$-pluriassociative algebra over one generator.
\end{Proposition}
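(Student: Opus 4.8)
The plan is to identify the vector space described in the statement with the free $\Dias_\gamma$-algebra $\AlgLibre_{\Dias_\gamma} = \bigoplus_{n \geq 1} \Dias_\gamma(n)$ produced by the general construction recalled in Section~\ref{sec:outils}, and then to check that the operations $\GDias_a$ and $\DDias_a$ given in the statement are exactly the operations that this construction attaches to the degree-two generators of $\Dias_\gamma$. The identification of the underlying spaces is immediate from Proposition~\ref{prop:elements_dias_gamma}, which says that $\Dias_\gamma(n)$ is the linear span of the words of length $n$ on $\{0\} \cup [\gamma]$ with exactly one $0$; hence $\bigoplus_{n \geq 1} \Dias_\gamma(n)$ is the linear span of all nonempty such words, as claimed, and this algebra is generated by the single element $\Unite = 0 \in \Dias_\gamma(1)$ since $x = x \circ (\Unite, \dots, \Unite)$ for every $x$.

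First I would recall that, by definition of the free $\Oca$-algebra, the action of an element $x$ of $\Dias_\gamma(2)$ on two words $e_1, e_2$ of $\AlgLibre_{\Dias_\gamma}$ is $x(e_1, e_2) = x \circ (e_1, e_2) = (x \circ_2 e_2) \circ_1 e_1$. Since $\Dias_\gamma$ is the suboperad of $\T\Mca_\gamma$ generated by the words $0a$ and $a0$, $a \in [\gamma]$, and since, via the isomorphism $\bar\Mot_\gamma$ of Theorem~\ref{thm:presentation_dias_gamma}, the generator $\GDias_a$ is the word $0a$ and $\DDias_a$ is the word $a0$, it suffices to compute these composite partial compositions inside $\T\Mca_\gamma$, using the definition of its partial composition and the fact that $\Max$ admits $0$ as neutral element.

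Then I would carry out the two computations. For $\GDias_a = 0a$ and words $u$ of length $n$ and $v$ of length $m$: the composition $0a \circ_2 v$ replaces the second letter $a$ of $0a$ by $(a \Max v_1) \cdots (a \Max v_m)$, which is precisely $\Augm_a(v)$, so $0a \circ_2 v = 0\, \Augm_a(v)$; composing on $\circ_1$ with $u$ then replaces the leading $0$ by $(0 \Max u_1) \cdots (0 \Max u_n) = u$, yielding $u\, \Augm_a(v)$, that is $u \GDias_a v$. Symmetrically, $a0 \circ_2 v = a\, v$, and $(a\, v) \circ_1 u = \Augm_a(u)\, v = u \DDias_a v$. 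This shows that the operations of the statement agree with those of the free-algebra construction on the generating set $\GenDias$ of $\Dias_\gamma$.

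Finally, since by the remark following~\eqref{equ:relation_algebre_sur_operade} the whole action of $\Dias_\gamma$ on any $\Dias_\gamma$-algebra is determined by the action of a generating set, and since $\GenDias$ generates $\Dias_\gamma$ by Theorem~\ref{thm:presentation_dias_gamma}, the data above determine the free-algebra structure completely and coincide with it; hence $\AlgLibre_{\Dias_\gamma}$ endowed with the operations $\GDias_a$, $\DDias_a$ is the free $\gamma$-pluriassociative algebra over one generator. There is no real obstacle here: the only point requiring a little care is to read off, from the definition of $\T\Mca_\gamma$, that left multiplication by $a$ in the monoid $\Mca_\gamma$ is exactly the operation $\Augm_a$, and that left multiplication by $0$ is the identity.
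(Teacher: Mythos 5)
Your proposal is correct and follows essentially the same route as the paper's proof: identify the underlying space via Proposition~\ref{prop:elements_dias_gamma}, then check that the free-algebra action of the generators $0a$ and $a0$ of $\Dias_\gamma$ computed in $\T\Mca_\gamma$, namely $(0a \circ_2 v) \circ_1 u = u\,\Augm_a(v)$ and $(a0 \circ_2 v) \circ_1 u = \Augm_a(u)\,v$, matches the stated operations. Your closing remark that the action of a generating set determines the whole structure is exactly the observation the paper makes after~\eqref{equ:relation_algebre_sur_operade}, so nothing is missing.
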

\begin{proof}
    The fact that $\AlgLibre_{\Dias_\gamma}$ is the stated vector space
    is a consequence of the description of the elements of $\Dias_\gamma$
    provided by Proposition~\ref{prop:elements_dias_gamma}. Since
    $\Dias_\gamma$ is by definition the suboperad of $\T \Mca_\gamma$
    generated by $\{0a, a0 : a \in [\gamma]\}$, $\AlgLibre_{\Dias_\gamma}$
    is endowed with $2\gamma$ binary operations where any generator
    $0a$ (resp. $a0$) gives rise to the operation $\GDias_a$ (resp.
    $\DDias_a$) of $\AlgLibre_{\Dias_\gamma}$. Moreover, by making use
    of the realization of $\Dias_\gamma$, we have for all
    $u, v \in \AlgLibre_{\Dias_\gamma}$ and $a \in [\gamma]$,
    \begin{subequations}
    \begin{equation}
        u \GDias_a v
        = (u \otimes v) \Action 0a
        = (0a \circ_2 v) \circ_1 u = u \: \Augm_a(v)
    \end{equation}
    and
    \begin{equation}
        u \DDias_a v
        = (u \otimes v) \Action a0
        = (a0 \circ_2 v) \circ_1 u = \Augm_a(u) \: v.
    \end{equation}
    \end{subequations}
\end{proof}
\medskip

One has for instance in $\AlgLibre_{\Dias_4}$,
\begin{equation}
    \textcolor{Bleu}{101241} \GDias_2 \textcolor{Rouge}{203} =
    \textcolor{Bleu}{101241}\textcolor{Rouge}{2{\bf 2}3}
\end{equation}
and
\begin{equation}
    \textcolor{Bleu}{101241} \DDias_3 \textcolor{Rouge}{203} =
    \textcolor{Bleu}{{\bf 3333}4{\bf 3}}\textcolor{Rouge}{203}.
\end{equation}
\medskip

\subsection{Bar and wire-units}
Loday has defined in~\cite{Lod01} some notions of units in diassociative
algebras. We generalize here these definitions to the context of
$\gamma$-pluriassociative algebras.
\medskip

\subsubsection{Bar-units}
Let $\Pca$ be a $\gamma$-pluriassociative algebra and $a \in [\gamma]$.
We say that an element $e$ of $\Pca$ is an {\em $a$-bar-unit}, or
simply a {\em bar-unit} when taking into account the value of $a$
is not necessary, of $\Pca$ if for all $x \in \Pca$,
\begin{equation}
    x \GDias_a e = x = e \DDias_a x.
\end{equation}
As we shall see below, a $\gamma$-pluriassociative algebra can have,
for a given $a \in [\gamma]$, several $a$-bar-units. The {\em $a$-halo}
of $\Pca$, denoted by $\Halo_a(\Pca)$, is the set of the $a$-bar-units
of $\Pca$.
\medskip

\subsubsection{Wire-units}
Let $\Pca$ be a $\gamma$-pluriassociative algebra and $a \in [\gamma]$.
We say that an element $e$ of $\Pca$ is an {\em $a$-wire-unit}, or
simply a {\em wire-unit} when taking into account the value of $a$
is not necessary, of $\Pca$ if for all $x \in \Pca$,
\begin{equation}
    e \GDias_a x = x = x \DDias_a e.
\end{equation}
As shows the following proposition, the presence of a wire-unit in
$\Pca$ has some implications.
\medskip

\begin{Proposition} \label{prop:unite_fil}
    Let $\gamma \geq 0$ be an integer and $\Pca$ be a
    $\gamma$-pluriassociative algebra admitting a $b$-wire-unit $e$
    for a $b \in [\gamma]$. Then
    \begin{enumerate}[label={\it (\roman*)}]
        \item \label{prop:unite_fil_1}
        for all $a \in [b]$, the operations $\GDias_a$,
        $\GDias_b$, $\DDias_a$, and $\DDias_b$ of $\Pca$ are equal;
        \item \label{prop:unite_fil_2}
        $e$ is also an $a$-wire-unit for all $a \in [b]$;
        \item \label{prop:unite_fil_3}
        $e$ is the only wire-unit of $\Pca$;
        \item \label{prop:unite_fil_4}
        if $e'$ is an $a$-bar unit for a $a \in [b]$, then $e' = e$.
    \end{enumerate}
\end{Proposition}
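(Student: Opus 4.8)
The plan is to establish item \ref{prop:unite_fil_1} first; items \ref{prop:unite_fil_2}, \ref{prop:unite_fil_3}, and \ref{prop:unite_fil_4} will then follow with essentially no extra work. Throughout I would read the relations \eqref{equ:relation_dias_gamma_1_concise}--\eqref{equ:relation_dias_gamma_5_concise} as identities holding in $\Pca$ for all $x, y, z \in \Pca$ and all admissible indices, and then specialize: in each relevant relation one index is set to $b$ and one of the three arguments is set to the wire-unit $e$, so that one side collapses via $e \GDias_b w = w$ or $w \DDias_b e = w$. Fix $a \in [b]$, so that $a \Max b = b$. From \eqref{equ:relation_dias_gamma_4_concise}, read as $(x \GDias_\alpha y) \GDias_{\alpha \Max \beta} z = x \GDias_\alpha (y \GDias_\beta z)$, taking $\alpha := b$, $\beta := a$, $x := e$ gives $y \GDias_b z = y \GDias_a z$, i.e.\ $\GDias_a = \GDias_b$. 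From \eqref{equ:relation_dias_gamma_5_concise}, read as $(x \DDias_\beta y) \DDias_\alpha z = x \DDias_{\alpha \Max \beta} (y \DDias_\alpha z)$, taking $\alpha := b$, $\beta := a$, $z := e$ gives $x \DDias_a y = x \DDias_b y$, i.e.\ $\DDias_a = \DDias_b$. From \eqref{equ:relation_dias_gamma_1_concise}, read as $(x \DDias_\beta y) \GDias_\alpha z = x \DDias_\beta (y \GDias_\alpha z)$, taking $\alpha := \beta := b$, $y := e$ gives $x \GDias_b z = x \DDias_b z$, i.e.\ $\GDias_b = \DDias_b$. Chaining these equalities yields $\GDias_a = \GDias_b = \DDias_b = \DDias_a$ for every $a \in [b]$, which is \ref{prop:unite_fil_1}.

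Item \ref{prop:unite_fil_2} is then immediate from \ref{prop:unite_fil_1}, since $e \GDias_a x = e \GDias_b x = x$ and $x \DDias_a e = x \DDias_b e = x$ for $a \in [b]$. For \ref{prop:unite_fil_3}, given any $c$-wire-unit $e'$, set $d := \min(b, c)$; if $d = c \in [b]$, then \ref{prop:unite_fil_1}--\ref{prop:unite_fil_2} applied to $e$ give $\GDias_d = \DDias_d$ and that $e$ is a $d$-wire-unit (while $e'$ is one by hypothesis), whereas if $d = b \in [c]$ the same reasoning applied to $e'$ gives $\GDias_d = \DDias_d$ and that $e'$ is a $d$-wire-unit (while $e$ is one by hypothesis). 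In either case $e$ and $e'$ are two-sided units for the single binary operation $\GDias_d = \DDias_d$, so $e = e \GDias_d e' = e'$. Item \ref{prop:unite_fil_4} is the same idea: if $e'$ is an $a$-bar-unit with $a \in [b]$, then by \ref{prop:unite_fil_1} the operations $\GDias_a, \GDias_b, \DDias_a, \DDias_b$ all coincide, $e$ is a two-sided unit of this common operation as a $b$-wire-unit, $e'$ is a two-sided unit of it as an $a$-bar-unit, and hence $e = e'$.

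The only genuinely delicate point is \ref{prop:unite_fil_1}: among the five families of quadratic relations one must isolate the three, \eqref{equ:relation_dias_gamma_1_concise}, \eqref{equ:relation_dias_gamma_4_concise}, and \eqref{equ:relation_dias_gamma_5_concise}, that degenerate to a bare equality of two binary products once $e$ is substituted in the correct argument and $a \Max b = b$ is used; everything else, including the precise bookkeeping of which subscript equals which and which argument to specialize, is routine, and items \ref{prop:unite_fil_2}--\ref{prop:unite_fil_4} are formal consequences.
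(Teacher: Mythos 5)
Your proposal is correct and takes essentially the same approach as the paper: specialize the quadratic pluriassociative relations at the wire-unit $e$ to collapse the operations and obtain~\ref{prop:unite_fil_1}, then deduce \ref{prop:unite_fil_2}--\ref{prop:unite_fil_4} by formal two-sided-unit arguments. The only cosmetic difference is in which relations are specialized: the paper derives~\ref{prop:unite_fil_1} from \eqref{equ:relation_dias_gamma_2_concise} and \eqref{equ:relation_dias_gamma_4_concise} by inserting $e$ on the left via $\GDias_b$, whereas you use \eqref{equ:relation_dias_gamma_1_concise}, \eqref{equ:relation_dias_gamma_4_concise}, and \eqref{equ:relation_dias_gamma_5_concise} with $e$ placed in the left, middle, and right argument slots, which is equally valid.
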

\begin{proof}
    Let us show part~\ref{prop:unite_fil_1}. By
    Relation~\eqref{equ:relation_dias_gamma_4_concise} of
    $\gamma$-pluriassociative algebras and by the fact that $e$ is a
    $b$-wire-unit of $\Pca$, we have for all elements $y$ and $z$
    of $\Pca$ and all $a \in [b]$,
    \begin{equation}
        y \GDias_a z = e \GDias_b (y \GDias_a z) =
        e \GDias_b (y \DDias_a z) = y \DDias_a z.
    \end{equation}
    Thus, the operations $\GDias_a$ and $\DDias_a$ of $\Pca$ are equal.
    Moreover, for the same reasons, we have
    \begin{equation}
        y \GDias_a z = e \GDias_b (y \GDias_a z) =
        (e \GDias_b y) \GDias_b z = y \GDias_b z.
    \end{equation}
    Then, the operations $\GDias_a$ and $\GDias_b$ of $\Pca$ are equal,
    whence~\ref{prop:unite_fil_1}.
    \smallskip

    Now, by~\ref{prop:unite_fil_1} and by the fact that $e$ is a
    $b$-wire-unit, we have for all elements $x$ of $\Pca$ and all
    $a \in [b]$,
    \begin{equation}
        e \GDias_a x = e \GDias_b x = x = x \DDias_b e = x \DDias_a e,
    \end{equation}
    showing~\ref{prop:unite_fil_2}.
    \smallskip

    To prove~\ref{prop:unite_fil_3}, assume that $e'$ is a $b'$-wire-unit
    of $\Pca$ for a $b' \in [\gamma]$. By~\ref{prop:unite_fil_1} and by
    the fact that $e$ is a $b$-wire-unit, one has
    \begin{equation}
        e = e \DDias_{b'} e' = e \GDias_b e' = e',
    \end{equation}
    showing~\ref{prop:unite_fil_3}.
    \smallskip

    To establish~\ref{prop:unite_fil_4}, let us first prove that $e$ is
    a $b$-bar-unit. By~\ref{prop:unite_fil_1} and by the fact that $e$
    is a $b$-wire-unit, we have for all elements $x$ of $\Pca$,
    \begin{equation}
        e \DDias_b x = e \GDias_b x = x = x \DDias_b e = x \GDias_b e.
    \end{equation}
    Now, since $e'$ is an $a$-bar-unit for an $a \in [b]$,
    by~\ref{prop:unite_fil_1} and by the fact that $e$ is a $b$-wire-unit,
    \begin{equation}
        e = e' \DDias_a e = e' \DDias_b e = e'.
    \end{equation}
    This shows~\ref{prop:unite_fil_4}.
\end{proof}
\medskip

Relying on Proposition~\ref{prop:unite_fil}, we define the {\em height}
of a $\gamma$-pluriassociative algebra $\Pca$ as zero if $\Pca$ has no
wire-unit, otherwise as the greatest integer $h \in [\gamma]$ such that
the unique wire-unit $e$ of $\Pca$ is a $h$-wire-unit. Observe that any
pure $\gamma$-pluriassociative algebra has height $0$ or $1$.
\medskip

\subsection{Construction of pluriassociative algebras}
We now present a general way to construct $\gamma$-pluriassociative
algebras. Our construction is a natural generalization of some
constructions introduced by Loday~\cite{Lod01} in the context of
diassociative algebras. We introduce in this section new algebraic
structures, the so-called $\gamma$-multiprojection algebras, which are
inputs of our construction.
\medskip

\subsubsection{Multiassociative algebras}%
\label{subsubsec:algebres_multiassociatives}
For any integer $\gamma \geq 0$, a {\em $\gamma$-multiassociative algebra}
is a vector space $\Mca$ endowed with linear operations
\begin{equation}
    \MAs_a : \Mca \otimes \Mca \to \Mca,
    \qquad a \in [\gamma],
\end{equation}
satisfying, for all $x, y, z \in \Mca$, the relations
\begin{equation} \label{equ:relation_algebre_multiassoc}
    (x \MAs_a y) \MAs_b z =
    (x \MAs_b y) \MAs_{a'} z =
    x \MAs_{a''} (y \MAs_b z) =
    x \MAs_b (y \MAs_{a'''} z),
    \qquad a, a', a'', a''' \leq b \in [\gamma].
\end{equation}
These algebras are obvious generalizations of associative algebras since
all of its operations are associative. Observe that
by~\eqref{equ:relation_algebre_multiassoc}, all bracketings of
an expression involving elements of a $\gamma$-multiassociative algebra
and some of its operations are equal. Then, since the bracketings of such
expressions are not significant, we shall denote these without parenthesis.
In Section~3 of~\cite{GirII}, we will study the
underlying operads of the category of $\gamma$-multiassociative algebras,
called $\As_\gamma$, for a very specific purpose.
\medskip

If $\Mca_1$ and $\Mca_2$ are two $\gamma$-multiassociative algebras,
a linear map $\phi : \Mca_1 \to \Mca_2$ is a
{\em $\gamma$-multiassociative algebra morphism} if it commutes with
the operations of $\Mca_1$ and $\Mca_2$. We say that $\Mca$ is
{\em commutative} when all operations of $\Mca$ are commutative.
Besides, for an $a \in [\gamma]$, an element $\Unite$ of $\Mca$ is an
{\em $a$-unit}, or simply a {\em unit} when taking into account the
value of $a$ is not necessary, of $\Mca$ if for all $x \in \Mca$,
$\Unite \MAs_a x = x = x \MAs_a \Unite$. When $\Mca$ admits a unit,
we say that $\Mca$ is {\em unital}. As shows the following proposition,
the presence of a unit in $\Mca$ has some implications.
\medskip

\begin{Proposition} \label{prop:unites_algebre_multiassociative}
    Let $\gamma \geq 0$ be an integer and $\Mca$ be a
    $\gamma$-multiassociative algebra admitting a $b$-unit $\Unite$
    for a $b \in [\gamma]$. Then
    \begin{enumerate}[label={\it (\roman*)}]
        \item \label{prop:unites_algebre_multiassociative_1}
        for all $a \in [b]$, the operations $\MAs_a$ and
        $\MAs_b$ of $\Mca$ are equal;
        \item \label{prop:unites_algebre_multiassociative_2}
        $\Unite$ is also an $a$-unit for all $a \in [b]$;
        \item \label{prop:unites_algebre_multiassociative_3}
        $\Unite$ is the only unit of $\Mca$.
    \end{enumerate}
\end{Proposition}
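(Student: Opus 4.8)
The plan is to follow closely the proof of Proposition~\ref{prop:unite_fil}, using the defining relations~\eqref{equ:relation_algebre_multiassoc} of $\gamma$-multiassociative algebras together with the hypothesis that $\Unite$ is a $b$-unit.

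First I would prove~\ref{prop:unites_algebre_multiassociative_1}. Fix $a \in [b]$ and $x, y \in \Mca$. Since $\Unite$ is a $b$-unit, $y = \Unite \MAs_b y$, whence $x \MAs_a y = x \MAs_a (\Unite \MAs_b y)$. Now I apply the equality between the third and the fourth members of~\eqref{equ:relation_algebre_multiassoc}, specialised by replacing the inner left operand by $\Unite$, the inner index by $b$, and taking the two outer indices to be $a$ and $b$ respectively (both are $\leq b$, so the constraint of~\eqref{equ:relation_algebre_multiassoc} is met): this gives $x \MAs_a (\Unite \MAs_b y) = x \MAs_b (\Unite \MAs_b y) = x \MAs_b y$. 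Hence $x \MAs_a y = x \MAs_b y$ for all $x, y \in \Mca$, that is, $\MAs_a = \MAs_b$.

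Part~\ref{prop:unites_algebre_multiassociative_2} then follows immediately: for any $a \in [b]$ and $x \in \Mca$ one has $\Unite \MAs_a x = \Unite \MAs_b x = x = x \MAs_b \Unite = x \MAs_a \Unite$, using~\ref{prop:unites_algebre_multiassociative_1} and the $b$-unitality of $\Unite$. For~\ref{prop:unites_algebre_multiassociative_3}, suppose $\Unite'$ is a $b'$-unit for some $b' \in [\gamma]$ and set $c := \min\{b, b'\}$. By~\ref{prop:unites_algebre_multiassociative_2} applied to $\Unite$ (since $c \leq b$) and to $\Unite'$ (since $c \leq b'$), both $\Unite$ and $\Unite'$ are $c$-units. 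Computing $\Unite \MAs_c \Unite'$ in two ways, namely as $\Unite'$ because $\Unite$ is a left $c$-unit and as $\Unite$ because $\Unite'$ is a right $c$-unit, yields $\Unite = \Unite'$.

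All the computations here are short. The only point that requires attention is choosing the right specialisation of the four-term relation~\eqref{equ:relation_algebre_multiassoc} in the proof of~\ref{prop:unites_algebre_multiassociative_1}, and verifying that the indices used there indeed satisfy the constraint $a, a', a'', a''' \leq b$; beyond that I anticipate no real obstacle, the rest being a transcription of the wire-unit argument of Proposition~\ref{prop:unite_fil}.
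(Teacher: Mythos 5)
Your proposal is correct and follows essentially the same route as the paper: the paper proves \ref{prop:unites_algebre_multiassociative_1} by appending $\Unite$ on the right via $\MAs_b$ and invoking the first two members of~\eqref{equ:relation_algebre_multiassoc}, which is just the mirror image of your insertion of $\Unite$ on the left via the third and fourth members, and parts \ref{prop:unites_algebre_multiassociative_2} and \ref{prop:unites_algebre_multiassociative_3} coincide with the paper's argument. Your handling of \ref{prop:unites_algebre_multiassociative_3} with $c := \min\{b, b'\}$ is if anything slightly more careful than the paper's chain $\Unite = \Unite \MAs_{b'} \Unite' = \Unite \MAs_b \Unite' = \Unite'$, which tacitly uses \ref{prop:unites_algebre_multiassociative_1} for whichever of $b, b'$ is larger.
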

\begin{proof}
    By Relation~\eqref{equ:relation_algebre_multiassoc} of
    $\gamma$-multiassociative algebras and by the fact that $\Unite$ is
    a $b$-unit of $\Mca$, we have for all elements $y$ and $z$ of $\Mca$
    and all $a \in [b]$,
    \begin{equation}
        y \MAs_a z
            = y \MAs_a z \MAs_b \Unite
            = y \MAs_b z \MAs_b \Unite
            = y \MAs_b z.
    \end{equation}
    Therefore, $\MAs_a = \MAs_b$,
    showing~\ref{prop:unites_algebre_multiassociative_1}.
    \smallskip

    Now, by~\ref{prop:unites_algebre_multiassociative_1} and by the fact
    that $\Unite$ is a $b$-unit, we have for all elements $x$ of $\Mca$
    and all $a \in [b]$,
    \begin{equation}
        \Unite \MAs_a x
            = \Unite \MAs_b x
            = x
            = x \MAs_b \Unite = x \MAs_a \Unite,
    \end{equation}
    showing~\ref{prop:unites_algebre_multiassociative_2}.
    \smallskip

    To prove~\ref{prop:unites_algebre_multiassociative_3}, assume
    that $\Unite'$ is a $b'$-unit of $\Mca$ for a $b' \in [\gamma]$.
    By~\ref{prop:unites_algebre_multiassociative_1} and by the fact that
    $\Unite$ is a $b$-unit, one has
    \begin{equation}
        \Unite
            = \Unite \MAs_{b'} \Unite'
            = \Unite \MAs_b \Unite'
            = \Unite',
    \end{equation}
    establishing~\ref{prop:unites_algebre_multiassociative_3}.
\end{proof}
\medskip

Relying on Proposition~\ref{prop:unites_algebre_multiassociative},
similarly to the case of $\gamma$-pluriassociative algebras, we define
the {\em height} of a $\gamma$-multiassociative algebra $\Mca$ as zero
if $\Mca$ has no unit, otherwise as the greatest integer $h \in [\gamma]$
such that the unit $\Unite$ of $\Mca$ is an $h$-unit.
\medskip

\subsubsection{Multiprojection algebras}
We call {\em $\gamma$-multiprojection algebra} any $\gamma$-multiassociative
algebra $\Mca$ endowed with endomorphisms
\begin{equation}
    \pi_a : \Mca \to \Mca,
    \qquad a \in [\gamma],
\end{equation}
satisfying
\begin{equation} \label{equ:relation_algebre_multiproj}
    \pi_a \circ \pi_{a'} = \pi_{a \Max a'},
    \qquad a, a' \in [\gamma].
\end{equation}
\medskip

By extension, the {\em height} of $\Mca$ is its height as a
$\gamma$-multiassociative algebra. We say that $\Mca$ is {\em unital} as
a $\gamma$-multiprojection algebra if $\Mca$ is unital as a
$\gamma$-multiassociative algebra and its only, by
Proposition~\ref{prop:unites_algebre_multiassociative}, unit $\Unite$
satisfies $\pi_a(\Unite) = \Unite$ for all $a \in [h]$ where $h$ is the
height of~$\Mca$.
\medskip

\subsubsection{From multiprojection algebras to pluriassociative algebras}
Next result describes how to construct $\gamma$-pluriassociative
algebras from $\gamma$-multiprojection algebras.
\medskip

\begin{Theoreme} \label{thm:algebre_multiprojections_vers_dias_gamma}
    For any integer $\gamma \geq 0$ and any $\gamma$-multiprojection
    algebra $\Mca$, the vector space $\Mca$ endowed with binary linear
    operations $\GDias_a$, $\DDias_a$, $a \in [\gamma]$, defined for all
    $x, y \in \Mca$ by
    \begin{subequations}
    \begin{equation}
        x \GDias_a y := x \MAs_a \pi_a(y)
    \end{equation}
    and
    \begin{equation}
        x \DDias_a y := \pi_a(x) \MAs_a y,
    \end{equation}
    \end{subequations}
    where the $\MAs_a$, $a \in [\gamma]$, are the operations of $\Mca$ and
    the $\pi_a$, $a \in [\gamma]$, are its endomorphisms, is a
    $\gamma$-pluriassociative algebra, denoted by $\MProjVersPluri(\Mca)$.
\end{Theoreme}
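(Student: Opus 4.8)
The plan is to check that the binary operations $\GDias_a$ and $\DDias_a$ defined in the statement make the vector space $\Mca$ into a $\gamma$-pluriassociative algebra. By Theorem~\ref{thm:presentation_dias_gamma} together with the general correspondence between a presentation of an operad and its category of algebras (Section~\ref{sec:outils}), this amounts to verifying, for all $x, y, z \in \Mca$ and all $a, a' \in [\gamma]$, the five identities obtained by reading the generators~\eqref{equ:relation_dias_gamma_1_concise}---\eqref{equ:relation_dias_gamma_5_concise} of $\RelDias$ through~\eqref{equ:relation_algebre_sur_operade}. Concretely, if $g$ and $h$ are binary operations, $g \circ_1 h$ acts on a triple by $(e_1 \, h \, e_2) \, g \, e_3$ and $g \circ_2 h$ by $e_1 \, g \, (e_2 \, h \, e_3)$; since every generator in~\eqref{equ:relation_dias_gamma_1_concise}---\eqref{equ:relation_dias_gamma_5_concise} has the form $g_1 \circ_1 g_2 - g_3 \circ_2 g_4$, each identity to prove has the shape $(x \, g_2 \, y) \, g_1 \, z = x \, g_3 \, (y \, g_4 \, z)$; for instance~\eqref{equ:relation_dias_gamma_2_concise} becomes $(x \GDias_{a \Max a'} y) \GDias_a z = x \GDias_a (y \DDias_{a'} z)$.

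First I would record the two elementary facts about a $\gamma$-multiprojection algebra $\Mca$ that drive the computation: each $\pi_a$ is an algebra endomorphism, hence commutes with every operation $\MAs_c$, and $\pi_a \circ \pi_{a'} = \pi_{a \Max a'}$ by~\eqref{equ:relation_algebre_multiproj}, so a nested pair of projections collapses to the one indexed by the maximum. I would also isolate the structural content of~\eqref{equ:relation_algebre_multiassoc}: in a $\gamma$-multiassociative algebra, a product of three elements $u, v, w$ built from two operations $\MAs_c$ and $\MAs_d$ is independent of the bracketing and of the respective positions of $\MAs_c$ and $\MAs_d$, and equals $(u \MAs_b v) \MAs_b w$ with $b := \max(c, d)$; write $P_b(u, v, w)$ for this common value. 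This follows from~\eqref{equ:relation_algebre_multiassoc} by specializing its free parameter to $b$ and using that one of $c, d$ already equals $b$.

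Then each of the five verifications reduces to a two-line manipulation. Taking~\eqref{equ:relation_dias_gamma_2_concise} with $b := a \Max a'$ as a representative example: the left-hand side unfolds to $(x \MAs_b \pi_b(y)) \MAs_a \pi_a(z)$, while on the right-hand side $\pi_a(y \DDias_{a'} z) = \pi_a(\pi_{a'}(y) \MAs_{a'} z) = \pi_a(\pi_{a'}(y)) \MAs_{a'} \pi_a(z) = \pi_b(y) \MAs_{a'} \pi_a(z)$, so that side unfolds to $x \MAs_a (\pi_b(y) \MAs_{a'} \pi_a(z))$. Both are triple products of $x$, $\pi_b(y)$, $\pi_a(z)$ whose two operation indices have maximum $b$ (the indices are $\{b, a\}$ on the left and $\{a, a'\}$ on the right), hence both equal $P_b(x, \pi_b(y), \pi_a(z))$ and thus agree. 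The remaining four identities, coming from~\eqref{equ:relation_dias_gamma_1_concise}, \eqref{equ:relation_dias_gamma_3_concise}, \eqref{equ:relation_dias_gamma_4_concise}, \eqref{equ:relation_dias_gamma_5_concise}, are handled in exactly the same way: expand via the definitions of $\GDias_a$ and $\DDias_a$; push projections inside products using that the $\pi_a$ are endomorphisms and collapse $\pi_a \pi_{a'}$ to $\pi_{a \Max a'}$; then observe that both sides are the same triple product recognized through $P_b$ for the relevant $b$.

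The only point demanding attention is bookkeeping: getting the operadic-to-algebraic translation of~\eqref{equ:relation_dias_gamma_1_concise}---\eqref{equ:relation_dias_gamma_5_concise} exactly right (especially the placement of the $a \Max a'$ indices) and, in each case, checking that after moving the projections both sides really are triple products with a common maximal operation index, so that $P_b$ applies. There is no conceptual difficulty, and the degenerate overlaps — such as $a = a'$, or $a \le a'$ versus $a' \le a$ — need no separate treatment, since $\max(c, d)$ is always attained by one of $c, d$, which is precisely the hypothesis under which $P_b$ is available.
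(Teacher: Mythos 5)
Your proposal is correct and follows essentially the same route as the paper: both verify the five compact relations~\eqref{equ:relation_dias_gamma_1_concise}---\eqref{equ:relation_dias_gamma_5_concise} directly, by unfolding the definitions of $\GDias_a$ and $\DDias_a$, pushing the projections inside products via the endomorphism property and $\pi_a \circ \pi_{a'} = \pi_{a \Max a'}$, and invoking~\eqref{equ:relation_algebre_multiassoc}. Your isolation of the common value $P_b$ of a triple product with maximal index $b$ is just a convenient repackaging of~\eqref{equ:relation_algebre_multiassoc} that the paper uses implicitly when it rewrites, for example, $x \MAs_a \pi_{a \Max a'}(y) \MAs_{a'} \pi_a(z)$ as $x \MAs_{a \Max a'} \pi_{a \Max a'}(y) \MAs_a \pi_a(z)$.
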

\begin{proof}
    This is a verification of the relations of $\gamma$-pluriassociative
    algebras in $\MProjVersPluri(\Mca)$. Let $x$, $y$, and $z$ be three
    elements of $\MProjVersPluri(\Mca)$ and  $a, a' \in [\gamma]$.
    \smallskip

    By~\eqref{equ:relation_algebre_multiassoc}, we have
    \begin{equation}
        (x \DDias_{a'} y) \GDias_a z =
        \pi_{a'}(x) \MAs_{a'} y \MAs_a \pi_a(z) =
        x \DDias_{a'} (y \GDias_a z),
    \end{equation}
    showing that~\eqref{equ:relation_dias_gamma_1_concise} is
    satisfied in $\MProjVersPluri(\Mca)$.
    \smallskip

    Moreover, by~\eqref{equ:relation_algebre_multiassoc}
    and~\eqref{equ:relation_algebre_multiproj}, we have
    \begin{equation} \begin{split}
        x \GDias_a (y \DDias_{a'} z)
            & = x \MAs_a \pi_a(\pi_{a'}(y) \MAs_{a'} z) \\
            & = x \MAs_a \pi_{a \Max a'}(y) \MAs_{a'} \pi_a(z) \\
            & = x \MAs_{a \Max a'} \pi_{a \Max a'}(y) \MAs_a \pi_a(z) \\
            & = (x \GDias_{a \Max a'} y) \GDias_a z,
    \end{split} \end{equation}
    so that~\eqref{equ:relation_dias_gamma_2_concise},
    and for the same
    reasons~\eqref{equ:relation_dias_gamma_3_concise},
    check out in $\MProjVersPluri(\Mca)$.
    \smallskip

    Finally, again by~\eqref{equ:relation_algebre_multiassoc}
    and~\eqref{equ:relation_algebre_multiproj}, we have
    \begin{equation} \begin{split}
        x \GDias_a (y \GDias_{a'} z)
            & = x \MAs_a \pi_a(y \MAs_{a'} \pi_{a'}(z)) \\
            & = x \MAs_a \pi_a(y) \MAs_{a'} \pi_{a \Max a'}(z) \\
            & = x \MAs_a \pi_a(y) \MAs_{a \Max a'} \pi_{a \Max a'}(z) \\
            & = (x \GDias_a y) \GDias_{a \Max a'} z,
    \end{split} \end{equation}
    showing that~\eqref{equ:relation_dias_gamma_4_concise},
    and for the same
    reasons~\eqref{equ:relation_dias_gamma_5_concise}, are
    satisfied in $\MProjVersPluri(\Mca)$.
\end{proof}
\medskip

When $\Mca$ is commutative, since for all $x, y \in \MProjVersPluri(\Mca)$
and $a \in [\gamma]$,
\begin{equation}
    x \GDias_a y = x \MAs_a \pi_a(y) =
    \pi_a(y) \MAs_a x = y \DDias_a x,
\end{equation}
it appears that $\MProjVersPluri(\Mca)$ is a commutative
$\gamma$-pluriassociative algebra.
\medskip

When $\Mca$ is unital, $\MProjVersPluri(\Mca)$ has several properties,
summarized in the next proposition.
\medskip

\begin{Proposition}%
\label{prop:algebre_multiprojections_vers_dias_gamma_proprietes}
    Let $\gamma \geq 0$ be an integer, $\Mca$ be a unital
    $\gamma$-multiprojection algebra of height~$h$.
    Then, by denoting by $\Unite$ the unit of $\Mca$ and by $\pi_a$,
    $a \in [\gamma]$, its endomorphisms,
    \begin{enumerate}[label={\it (\roman*)}]
        \item
        \label{item:algebre_multiprojections_vers_dias_gamma_proprietes_1}
        for any $a \in [h]$, $\Unite$ is an $a$-bar-unit of
        $\MProjVersPluri(\Mca)$;
        \item
        \label{item:algebre_multiprojections_vers_dias_gamma_proprietes_2}
        for any $a \leq b \in [h]$,
        $\Halo_a(\MProjVersPluri(\Mca))$ is a subset of
        $\Halo_b(\MProjVersPluri(\Mca))$;
        \item
        \label{item:algebre_multiprojections_vers_dias_gamma_proprietes_3}
        for any $a \in [h]$, the linear span of $\Halo_a(\MProjVersPluri(\Mca))$
        forms an $h\!-\!a\!+\!1$-pluriassociative subalgebra of the
        $h\!-\!a\!+\!1$-pluriassociative subalgebra of
        $\MProjVersPluri(\Mca)$ induced by $[a, h]$;
        \item
        \label{item:algebre_multiprojections_vers_dias_gamma_proprietes_4}
        for any $a \in [h]$, $\pi_a$
        is the identity map if and only if $\Unite$ is an $a$-wire-unit
        of $\MProjVersPluri(\Mca)$.
    \end{enumerate}
\end{Proposition}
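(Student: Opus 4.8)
The plan is to first unpack what it means for $\Mca$ to be unital of height $h$. By Proposition~\ref{prop:unites_algebre_multiassociative}, the operations $\MAs_a$ with $a \in [h]$ all coincide, $\Unite$ is an $a$-unit for every $a \in [h]$, and $\Unite$ is the unique unit of $\Mca$; moreover $\pi_a(\Unite) = \Unite$ for all $a \in [h]$ since $\Mca$ is unital as a $\gamma$-multiprojection algebra. With these facts, part~\ref{item:algebre_multiprojections_vers_dias_gamma_proprietes_1} is immediate: for $a \in [h]$ one computes $x \GDias_a \Unite = x \MAs_a \pi_a(\Unite) = x \MAs_a \Unite = x$ and $\Unite \DDias_a x = \pi_a(\Unite) \MAs_a x = \Unite \MAs_a x = x$, so $\Unite$ is an $a$-bar-unit of $\MProjVersPluri(\Mca)$.

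The observation that drives the remaining three parts is the following: \emph{if $e$ is an $a$-bar-unit of $\MProjVersPluri(\Mca)$ with $a \in [h]$, then $\pi_a(e) = \Unite$.} Indeed, writing out $x \GDias_a e = x = e \DDias_a x$ for all $x \in \Mca$ yields $x \MAs_a \pi_a(e) = x = \pi_a(e) \MAs_a x$; since $\MAs_a = \MAs_h$, this says $\pi_a(e)$ is an $h$-unit of $\Mca$, hence equals $\Unite$ by the uniqueness part of Proposition~\ref{prop:unites_algebre_multiassociative}. I would record this as the first step of the proof, since it is the common thread of parts~\ref{item:algebre_multiprojections_vers_dias_gamma_proprietes_2} and~\ref{item:algebre_multiprojections_vers_dias_gamma_proprietes_3}.

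Granting this, part~\ref{item:algebre_multiprojections_vers_dias_gamma_proprietes_2} follows for $a \leq b \in [h]$ and $e \in \Halo_a(\MProjVersPluri(\Mca))$: from $\pi_a(e) = \Unite$ together with $\pi_b \circ \pi_a = \pi_{b \Max a} = \pi_b$ we get $\pi_b(e) = \pi_b(\Unite) = \Unite$, whence $x \GDias_b e = x \MAs_b \Unite = x$ and $e \DDias_b x = \Unite \MAs_b x = x$, i.e.\ $e \in \Halo_b(\MProjVersPluri(\Mca))$. For part~\ref{item:algebre_multiprojections_vers_dias_gamma_proprietes_3}, fix $a \in [h]$ and $c \in [a, h]$; given $e, e' \in \Halo_a(\MProjVersPluri(\Mca))$, part~\ref{item:algebre_multiprojections_vers_dias_gamma_proprietes_2} puts both in $\Halo_c(\MProjVersPluri(\Mca))$, so $\pi_c(e) = \pi_c(e') = \Unite$, and then $e \GDias_c e' = e \MAs_c \Unite = e$ while $e \DDias_c e' = \Unite \MAs_c e' = e'$. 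By bilinearity of the operations, the linear span of $\Halo_a(\MProjVersPluri(\Mca))$ is therefore stable under every $\GDias_c$ and $\DDias_c$ with $c \in [a, h]$, which is exactly the assertion that it is an $h - a + 1$-pluriassociative subalgebra of the $h - a + 1$-pluriassociative subalgebra of $\MProjVersPluri(\Mca)$ induced by $[a, h]$ (note that it is nonempty, containing $\Unite$ by part~\ref{item:algebre_multiprojections_vers_dias_gamma_proprietes_1}).

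Finally, for part~\ref{item:algebre_multiprojections_vers_dias_gamma_proprietes_4}: if $\pi_a$ is the identity map then $\Unite \GDias_a x = \Unite \MAs_a x = x$ and $x \DDias_a \Unite = x \MAs_a \Unite = x$, so $\Unite$ is an $a$-wire-unit; conversely, if $\Unite$ is an $a$-wire-unit then $\Unite \GDias_a x = x$ reads $\Unite \MAs_a \pi_a(x) = x$, and using that $\Unite$ is an $a$-unit this forces $\pi_a(x) = x$ for all $x$, so $\pi_a$ is the identity. As for difficulty, none of the steps is a genuine obstacle: the whole argument is a short sequence of verifications built on relations~\eqref{equ:relation_algebre_multiassoc} and~\eqref{equ:relation_algebre_multiproj} and on Proposition~\ref{prop:unites_algebre_multiassociative}. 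The only point requiring a moment's thought is the uniqueness-of-units argument giving $\pi_a(e) = \Unite$ for $a$-bar-units $e$, which is why I would isolate it at the outset.
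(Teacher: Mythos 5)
Your proof is correct and follows essentially the same route as the paper: part (i) by direct computation using $\pi_a(\Unite)=\Unite$ and Proposition~\ref{prop:unites_algebre_multiassociative}, the key step $\pi_a(e)=\Unite$ for an $a$-bar-unit $e$ combined with $\pi_b\circ\pi_a=\pi_b$ for part (ii), closure of the halo under the operations indexed by $[a,h]$ for part (iii), and the same two-way verification for part (iv). Isolating the observation $\pi_a(e)=\Unite$ at the outset is only a presentational difference from the paper's proof.
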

\begin{proof}
    Let us denote by $\MAs_a$, $a \in [\gamma]$, the operations of
    $\Mca$.
    \smallskip

    Since $\Unite$ is an $h$-unit of $\Mca$, for all elements $x$ of
    $\MProjVersPluri(\Mca)$ and all $a \in [h]$,
    \begin{equation}
        x \GDias_a \Unite
            = x \MAs_a \pi_a(\Unite) = x \MAs_a \Unite
            = x =
            \Unite \MAs_a x = \pi_a(\Unite) \MAs_a x
        = \Unite \DDias_a x,
    \end{equation}
    showing~\ref{item:algebre_multiprojections_vers_dias_gamma_proprietes_1}.
    \smallskip

    Assume that $e$ is an element of $\Halo_a(\MProjVersPluri(\Mca))$
    for an $a \in [h]$, that is, $e$ is an $a$-bar-unit of
    $\MProjVersPluri(\Mca)$. Then, for all elements $x$ of
    $\MProjVersPluri(\Mca)$,
    \begin{equation}
        x \GDias_a e
            = x \MAs_a \pi_a(e) = x = \pi_a(e) \MAs_a x
        = e \DDias_a x,
    \end{equation}
    showing that $\pi_a(e)$ is the unit for the operation $\MAs_a$ on
    $\MProjVersPluri(\Mca)$
    and therefore, $\pi_a(e) = \Unite$. Since $\Mca$ is unital,
    we have $\pi_b(\Unite) = \Unite$ for all $b \in [h]$. Hence,
    and by~\eqref{equ:relation_algebre_multiproj},
    for all $a \leq b \in [h]$,
    \begin{equation}
        \pi_b(e) = \pi_b(\pi_a(e)) = \pi_b(\Unite) = \Unite.
    \end{equation}
    Then, for all elements $x$ of $\MProjVersPluri(\Mca)$ and all
    $a \leq b \in [h]$,
    \begin{equation}
        x \GDias_b e
            = x \MAs_b \pi_b(e) = x \MAs_b \Unite = x
            = \Unite \MAs_b x = \pi_b(e) \MAs_b x
        = e \DDias_b x,
    \end{equation}
    showing that $e$ is also a $b$-bar-unit of $\MProjVersPluri(\Mca)$,
    whence~\ref{item:algebre_multiprojections_vers_dias_gamma_proprietes_2}.
    \smallskip

    Let $a \in [\gamma]$ and $e$ and $e'$ be elements of
    $\Halo_a(\MProjVersPluri(\Mca))$.
    By~\ref{item:algebre_multiprojections_vers_dias_gamma_proprietes_2},
    $e$ and $e'$ are $b$-bar-units of $\MProjVersPluri(\Mca)$ for all
    $a \leq b \in [h]$ and hence,
    \begin{equation}
        e \GDias_b e' = e = e' \DDias_b e.
    \end{equation}
    Therefore, the linear span of $\Halo_a(\MProjVersPluri(\Mca))$ is
    stable for the operations $\GDias_b$ and $\DDias_b$. This
    implies~\ref{item:algebre_multiprojections_vers_dias_gamma_proprietes_3}.
    \smallskip

    Finally, assume that $\pi_a$ is the identity map for an $a \in [h]$.
    Then, for all elements $x$ of $\MProjVersPluri(\Mca)$,
    \begin{equation}
        \Unite \GDias_a x
            = \Unite \MAs_a \pi_a(x) = \Unite \MAs_a x
            = x
            = x \MAs_a \Unite = \pi_a(x) \MAs_a \Unite
        = x \DDias_a \Unite,
    \end{equation}
    showing that $\Unite$ is an $a$-wire unit of $\MProjVersPluri(\Mca)$.
    Conversely, if $\Unite$ is an $a$-wire unit of $\MProjVersPluri(\Mca)$,
    for all elements $x$ of $\MProjVersPluri(\Mca)$, the relations
    $\Unite \GDias_a x = x = x \DDias_a \Unite$ imply
    $\Unite \MAs_a \pi_a(x) = x = \pi_a(x) \MAs_a \Unite$ and hence,
    $\pi_a(x) = x$. This
    shows~\ref{item:algebre_multiprojections_vers_dias_gamma_proprietes_4}.
\end{proof}
\medskip

\subsubsection{Examples of constructions of pluriassociative algebras}
The construction $\MProjVersPluri$ of
Theorem~\ref{thm:algebre_multiprojections_vers_dias_gamma} allows
to build several $\gamma$-pluriassociative algebras. Here follows few
examples.
\medskip

\paragraph{\bf The $\gamma$-pluriassociative algebra of positive integers}
Let $\gamma \geq 1$ be an integer and consider the vector space $\AlgPos$
of positive integers, endowed with the operations $\MAs_a$, $a \in [\gamma]$,
all equal to the operation $\Max$ extended by linearity and with the
endomorphisms $\pi_a$, $a \in [\gamma]$, linearly defined for any
positive integer $x$ by $\pi_a(x) := a \Max x$. Then, $\AlgPos$ is a
non-unital $\gamma$-multiprojection algebra. By
Theorem~\ref{thm:algebre_multiprojections_vers_dias_gamma},
$\MProjVersPluri(\AlgPos)$ is a $\gamma$-pluriassociative algebra. We
have for instance
\begin{equation}
    \textcolor{Bleu}{2} \GDias_3 \textcolor{Rouge}{5} =
    \textcolor{Rouge}{5},
\end{equation}
and
\begin{equation}
    \textcolor{Bleu}{1} \DDias_3 \textcolor{Rouge}{2} = 3.
\end{equation}
We can observe that $\MProjVersPluri(\AlgPos)$ is commutative, pure, and
its $1$-halo is $\{1\}$. Moreover, when
$\gamma \geq 2$, $\MProjVersPluri(\AlgPos)$ has no wire-unit and no
$a$-bar-unit for $a \geq 2 \in [\gamma]$. This example is important
because it provides a counterexample
for~\ref{item:algebre_multiprojections_vers_dias_gamma_proprietes_2} of
Proposition~\ref{prop:algebre_multiprojections_vers_dias_gamma_proprietes}
in the case when the construction $\MProjVersPluri$ is applied
to a non-unital $\gamma$-multiprojection algebra.
\medskip

\paragraph{\bf The $\gamma$-pluriassociative algebra of finite sets}
Let $\gamma \geq 1$ be an integer and consider the vector space $\AlgEns$
of finite sets of positive integers, endowed with the operations $\MAs_a$,
$a \in [\gamma]$, all equal to the union operation $\cup$ extended by
linearity and with the endomorphisms $\pi_a$, $a \in [\gamma]$, linearly
defined for any finite set of positive integers $x$ by
$\pi_a(x) := x \cap [a, \gamma]$. Then, $\AlgEns$ is a $\gamma$-multiprojection
algebra. By Theorem~\ref{thm:algebre_multiprojections_vers_dias_gamma},
$\MProjVersPluri(\AlgEns)$ is a $\gamma$-pluriassociative algebra. We
have for instance
\begin{equation}
    \{\textcolor{Bleu}{2}, \textcolor{Bleu}{4}\}
    \GDias_3
    \{\textcolor{Rouge}{1}, \textcolor{Rouge}{3}, \textcolor{Rouge}{5}\}
    = \{\textcolor{Bleu}{2}, \textcolor{Rouge}{3},
    \textcolor{Bleu}{4}, \textcolor{Rouge}{5}\},
\end{equation}
and
\begin{equation}
    \{\textcolor{Bleu}{1}, \textcolor{Bleu}{2},
    \textcolor{Bleu}{4}\}
    \DDias_3
    \{\textcolor{Rouge}{1}, \textcolor{Rouge}{3}, \textcolor{Rouge}{5}\}
    = \{\textcolor{Rouge}{1}, \textcolor{Rouge}{3},
    \textcolor{Bleu}{4}, \textcolor{Rouge}{5}\}.
\end{equation}
We can observe that $\MProjVersPluri(\AlgEns)$ is commutative and pure.
Moreover, $\emptyset$ is a $1$-wire-unit of $\MProjVersPluri(\AlgEns)$ and,
by Proposition~\ref{prop:unite_fil}, it is its only wire-unit. Therefore,
$\MProjVersPluri(\AlgEns)$ has height $1$. Observe that for any
$a \in [\gamma]$, the $a$-halo of $\MProjVersPluri(\AlgEns)$ consists
in the subsets of $[a - 1]$. Besides, since $\AlgEns$ is a unital
$\gamma$-multiprojection algebra, $\MProjVersPluri(\AlgEns)$ satisfies
all properties exhibited by
Proposition~\ref{prop:algebre_multiprojections_vers_dias_gamma_proprietes}.
\medskip

\paragraph{\bf The $\gamma$-pluriassociative algebra of words}
Let $\gamma \geq 1$ be an integer and consider the vector space $\AlgMots$
of the words of positive integers. Let us endow $\AlgMots$ with the
operations $\MAs_a$, $a \in [\gamma]$, all equal to the concatenation
operation extended by linearity and with the endomorphisms $\pi_a$,
$a \in [\gamma]$, where for any word $x$ of positive integers, $\pi_a(x)$
is the longest subword of $x$ consisting in letters greater than or equal
to $a$. Then, $\AlgMots$ is a $\gamma$-multiprojection algebra. By
Theorem~\ref{thm:algebre_multiprojections_vers_dias_gamma},
$\MProjVersPluri(\AlgMots)$ is a $\gamma$-pluriassociative algebra. We
have for instance
\begin{equation}
    \textcolor{Bleu}{412} \GDias_3
    \textcolor{Rouge}{14231} =
    \textcolor{Bleu}{412}\textcolor{Rouge}{43},
\end{equation}
and
\begin{equation}
    \textcolor{Bleu}{11} \DDias_2 \textcolor{Rouge}{323} =
    \textcolor{Rouge}{323}.
\end{equation}
We can observe that $\MProjVersPluri(\AlgMots)$ is not commutative and is pure.
Moreover, $\epsilon$ is a $1$-wire-unit of $\MProjVersPluri(\AlgMots)$ and
by Proposition~\ref{prop:unite_fil}, it is its only wire-unit. Therefore,
$\MProjVersPluri(\AlgMots)$ has height $1$. Observe that for any
$a \in [\gamma]$, the $a$-halo of $\MProjVersPluri(\AlgMots)$ consists in
the words on the alphabet $[a - 1]$. Besides, since $\AlgMots$ is a unital
$\gamma$-multiprojection algebra, $\MProjVersPluri(\AlgMots)$ satisfies
all properties exhibited by
Proposition~\ref{prop:algebre_multiprojections_vers_dias_gamma_proprietes}.
\medskip

The $\gamma$-pluriassociative algebras $\MProjVersPluri(\AlgEns)$ and
$\MProjVersPluri(\AlgMots)$ are related in the following way. Let
$I_{\rm com}$ be the subspace of $\MProjVersPluri(\AlgMots)$ generated
by the $x - x'$ where $x$ and $x'$ are words of positive integers and
have the same commutative image. Since $I_{\rm com}$ is a
$\gamma$-pluriassociative algebra ideal of $\MProjVersPluri(\AlgMots)$,
one can consider the quotient $\gamma$-pluriassociative algebra
$\AlgMotsCom := \MProjVersPluri(\AlgMots)/_{I_{\rm com}}$. Its elements
can be seen as commutative words of positive integers.
\medskip

Moreover, let $I_{\rm occ}$ be the subspace of
$\MProjVersPluri(\AlgMotsCom)$ generated by the $x - x'$ where $x$ and $x'$
are commutative words of positive integers and for any letter
$a \in [\gamma]$, $a$ appears in $x$ if and only if $a$ appears in $x'$.
Since $I_{\rm occ}$ is a $\gamma$-pluriassociative algebra ideal of
$\MProjVersPluri(\AlgMotsCom)$, one can consider the quotient
$\gamma$-pluriassociative algebra
$\MProjVersPluri(\AlgMotsCom)/_{I_{\rm occ}}$. Its elements can be seen
as finite subsets of positive integers and we observe that
$\MProjVersPluri(\AlgMotsCom)/_{I_{\rm occ}} = \MProjVersPluri(\AlgEns)$.
\medskip

\paragraph{\bf The $\gamma$-pluriassociative algebra of marked words}
Let $\gamma \geq 1$ be an integer and consider the vector space
$\AlgMotsMarq$ of the words of positive integers where letters can be
marked or not, with at least one occurrence of a marked letter. We
denote by $\bar a$ any {\em marked letter} $a$ and we say that the
{\em value} of $\bar a$ is $a$. Let us endow $\AlgMotsMarq$ with the
linear operations $\MAs_a$, $a \in [\gamma]$, where for all words $u$
and $v$ of $\AlgMotsMarq$, $u \MAs_a v$ is obtained by concatenating $u$
and $v$, and by replacing therein all marked letters by $\bar c$ where
$c := \max(u) \Max a \Max \max(v)$ where $\max(u)$ (resp. $\max(v)$)
denotes the greatest value among the marked letters of $u$ (resp. $v$).
For instance,
\begin{equation}
    \textcolor{Bleu}{2 \bar 1 3 1 \bar 3}
    \MAs_2
    \textcolor{Rouge}{3 \bar 4 \bar 1 2 1}
    =
    \textcolor{Bleu}{2 {\bf \bar 4} 3 1 {\bf \bar 4}}
    \textcolor{Rouge}{3 \bar 4 {\bf \bar 4} 2 1},
\end{equation}
and
\begin{equation}
    \textcolor{Bleu}{\bar 2 1 1 \bar 1}
    \MAs_3
    \textcolor{Rouge}{3 4 \bar 2} =
    \textcolor{Bleu}{{\bf \bar 3} 1 1 {\bf \bar 3}}
    \textcolor{Rouge}{3 4 {\bf \bar 3}}.
\end{equation}
We also endow $\AlgMotsMarq$ with the endomorphisms $\pi_a$,
$a \in [\gamma]$, where for any word $u$ of $\AlgMotsMarq$, $\pi_a(u)$
is obtained by replacing in $u$ any occurrence of a nonmarked letter
smaller than $a$ by $a$. For instance,
\begin{equation}
    \pi_3\left(\textcolor{Rouge}{2} \textcolor{Bleu}{\bar 2}
        \textcolor{Rouge}{14} \textcolor{Bleu}{\bar 4}
        \textcolor{Rouge}{3} \textcolor{Bleu}{\bar 5}\right)
    = \textcolor{Rouge}{\bf 3} \textcolor{Bleu}{\bar 2}
        \textcolor{Rouge}{{\bf 3} 4} \textcolor{Bleu}{\bar 4}
        \textcolor{Rouge}{\bf 3} \textcolor{Bleu}{\bar 5}.
\end{equation}
One can show without difficulty that $\AlgMotsMarq$ is a
$\gamma$-multiprojection algebra. By
Theorem~\ref{thm:algebre_multiprojections_vers_dias_gamma},
$\MProjVersPluri(\AlgMotsMarq)$ is a $\gamma$-pluriassociative algebra.
We have for instance
\begin{equation}
    \textcolor{Bleu}{3 \bar 2 5}
    \GDias_3
    \textcolor{Rouge}{4 \bar 4 1}
    = \textcolor{Bleu}{3 {\bf \bar 4} 5}
        \textcolor{Rouge}{4 \bar 4 {\bf 3}},
\end{equation}
and
\begin{equation}
    \textcolor{Bleu}{1 \bar 3 4 \bar 1 3}
    \DDias_2
    \textcolor{Rouge}{3 1 \bar 2 3 \bar 1 1}
    = \textcolor{Bleu}{{\bf 2} \bar 3 4 {\bf \bar 3} 3}
      \textcolor{Rouge}{3 1 {\bf \bar 3} 3 {\bf \bar 3} 1}.
\end{equation}
We can observe that $\MProjVersPluri(\AlgMotsMarq)$ is not
commutative, pure, and has no wire-units neither bar-units.
\medskip

\paragraph{\bf The free $\gamma$-pluriassociative algebra over one generator}
Let $\gamma \geq 0$ be an integer. We give here a construction of the
free $\gamma$-pluriassociative algebra $\AlgLibre_{\Dias_\gamma}$ over one
generator described in Section~\ref{subsubsec:algebre_dias_gamma_libre}
passing through the following $\gamma$-multiprojection algebra
and the construction $\MProjVersPluri$. Consider the vector space of
nonempty words on the alphabet $\{0\} \cup [\gamma]$ with exactly one
occurrence of $0$, endowed with the operations $\MAs_a$, $a \in [\gamma]$,
all equal to the concatenation operation extended by linearity and with
the endomorphisms $\Augm_a$, $a \in [\gamma]$, defined in
Section~\ref{subsubsec:algebre_dias_gamma_libre}. This vector
space is a $\gamma$-multiprojection algebra. Therefore, by
Theorem~\ref{thm:algebre_multiprojections_vers_dias_gamma}, it gives
rise by the construction $\MProjVersPluri$ to a $\gamma$-pluriassociative
algebra and it appears that it is $\AlgLibre_{\Dias_\gamma}$. Besides,
we can now observe that $\AlgLibre_{\Dias_\gamma}$ is not commutative,
pure, and has no wire-units neither bar-units.
\medskip

\section{Pluritriassociative operads}%
\label{sec:pluritriass}
Our original idea of using the $\T$ construction (see
Sections~\ref{subsec:monoides_vers_operades}
and~\ref{subsubsec:construction_dias_gamma}) to obtain a
generalization of the diassociative operad admits an analogue in the
context of the triassociative operad~\cite{LR04}. We describe in this
section a generalisation on a nonnegative integer parameter $\gamma$
of the triassociative operad.
\medskip

Since the proofs of the results contained in this section are very
similar to the ones of Section~\ref{sec:dias_gamma}, we omit proofs here.
\medskip

\subsection{Construction and first properties}%
\label{subsec:trias_gamma}
For any integer $\gamma \geq 0$, we define $\Trias_\gamma$ as the
suboperad of $\Mca_\gamma$ generated by
\begin{equation} \label{equ:generateurs_trias_gamma}
   \{0a, 00, a0 : a \in [\gamma]\}.
\end{equation}
By definition, $\Trias_\gamma$ is the vector space of words that can be
obtained by partial compositions of words
of~\eqref{equ:generateurs_trias_gamma}. We have, for instance,
\begin{equation}
    \Trias_2(1) = \Vect(\{0\}),
\end{equation}
\begin{equation}
    \Trias_2(2) = \Vect(\{00, 01, 02, 10, 20\}),
\end{equation}
\begin{multline}
    \Trias_2(3)
    = \Vect(\{000, 001, 002, 010, 011, 012, 020, 021, \\
        022, 100, 101, 102, 110, 120, 200, 201, 202, 210, 220\}),
\end{multline}
\medskip

It follows immediately from the definition of $\Trias_\gamma$ as a
suboperad of $\T \Mca_\gamma$ that $\Trias_\gamma$ is a set-operad.
Moreover, one can observe that  $\Trias_\gamma$ is generated by the same
generators as the ones of $\Dias_\gamma$
(see~\eqref{equ:generateurs_dias_gamma}), plus the word $00$. Therefore,
$\Dias_\gamma$ is a suboperad of $\Trias_\gamma$. Besides, note that
$\Trias_0$ is the associative operad and that $\Trias_\gamma$ is a
suboperad of $\Trias_{\gamma + 1}$. We call $\Trias_\gamma$ the
{\em $\gamma$-pluritriassociative operad}.
\medskip

\begin{Proposition} \label{prop:elements_trias_gamma}
    For any integer $\gamma \geq 0$, as a set-operad, the underlying set
    of $\Trias_\gamma$ is the set of the words on the alphabet
    $\{0\} \cup [\gamma]$ containing at least one occurrence of $0$.
\end{Proposition}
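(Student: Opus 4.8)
The plan is to follow closely the proof of Proposition~\ref{prop:elements_dias_gamma}, establishing the two required inclusions by induction on the length $n$ of the words, with "exactly one occurrence of $0$" replaced throughout by "at least one occurrence of $0$" and with the extra generator $00$ of~\eqref{equ:generateurs_trias_gamma} treated along the same lines as $0a$ and $a0$.

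First I would prove that every word $x$ of $\Trias_\gamma$ contains at least one $0$, by induction on $n$. The case $n = 1$ is clear since then $x = 0$. When $n \geq 2$, there is a word $y$ of $\Trias_\gamma$ of length $n - 1$ and a generator $g$ of $\Trias_\gamma$ with $x = y \circ_i g$ for some $i \in [n - 1]$, so that $x$ is obtained from $y$ by replacing its $i$th letter $b$ with the factor $(b \Max g_1)(b \Max g_2)$. If $b = 0$, this factor is $g$ itself, hence contains at least one $0$ because each of $0a$, $00$, $a0$ does; if $b \ne 0$, this factor has no $0$, but the occurrence of $0$ in $y$ granted by the induction hypothesis (which sits at a position different from $i$) survives at some position of $x$. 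In both cases $x$ has at least one $0$.

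Conversely I would prove that any word $x$ on $\{0\} \cup [\gamma]$ having at least one $0$ belongs to $\Trias_\gamma$, again by induction on $n$. When $n = 1$, $x = 0$ is the unit of $\Trias_\gamma$. When $n \geq 2$, pick a position of a $0$ in $x$; since $n \geq 2$, this $0$ has a neighbour, so $x$ admits a factor $x_i x_{i + 1}$ equal to $0a$ or $a0$ for some $a \in [\gamma]$ (when the neighbour is nonzero) or equal to $00$ (when it is zero). In the $a0$ and $00$ cases, let $y$ be the word obtained by erasing the $i$th letter of $x$; in the $0a$ case, let $y$ be the word obtained by erasing the $(i + 1)$st letter of $x$. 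In each case $y$ has length $n - 1$, still contains a $0$, and a direct check using $0 \Max 0 = 0$ and $0 \Max a = a$ shows that $x = y \circ_i g$ where $g$ is the corresponding generator among $0a$, $00$, $a0$. By the induction hypothesis $y$ is an element of $\Trias_\gamma$, hence so is $x$.

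I expect no genuine obstacle. The only point deserving a bit of care is the case analysis in the converse: one must check that erasing the appropriate letter of the selected generator factor leaves a word that still contains a $0$ (which is automatic, since the deleted letter is never the only $0$ of that factor, and the factor always contributes a $0$ to $y$) and that the reconstructed partial composition indeed returns $x$. The novelty with respect to $\Dias_\gamma$, namely the generator $00$, is handled exactly like the others: inserting $00$ on a $0$-letter simply produces two consecutive $0$'s.
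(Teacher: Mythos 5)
Your proof is correct and is exactly the argument the paper intends: the paper omits the proof of Proposition~\ref{prop:elements_trias_gamma}, stating it is very similar to that of Proposition~\ref{prop:elements_dias_gamma}, and your adaptation (two inclusions by induction on the length, with the extra generator $00$ handled alongside $0a$ and $a0$, and the correct choice of which letter to erase in the converse direction) is precisely that analogue.
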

\medskip

We deduce from Proposition~\ref{prop:elements_trias_gamma} that the
Hilbert series of $\Trias_\gamma$ satisfies
\begin{equation}
    \Hca_{\Trias_\gamma}(t) =
    \frac{t}{(1 - \gamma t)(1 - \gamma t - t)}
\end{equation}
and that for all $n \geq 1$,
$\dim \Trias_\gamma(n) = (\gamma + 1)^n - \gamma^n$. For instance, the
first dimensions of  $\Trias_1$, $\Trias_2$, $\Trias_3$, and $\Trias_4$
are respectively
\begin{equation}
    1, 3, 7, 15, 31, 63, 127, 255, 511, 1023, 2047,
\end{equation}
\begin{equation}
    1, 5, 19, 65, 211, 665, 2059, 6305, 19171, 58025, 175099,
\end{equation}
\begin{equation}
    1, 7, 37, 175, 781, 3367, 14197, 58975, 242461, 989527, 4017157,
\end{equation}
\begin{equation}
    1, 9, 61, 369, 2101, 11529, 61741, 325089, 1690981, 8717049, 44633821.
\end{equation}
The first one is Sequence~\Sloane{A000225}, the second one is
Sequence~\Sloane{A001047}, the third one is Sequence~\Sloane{A005061},
and the last one is Sequence~\Sloane{A005060} of~\cite{Slo}.
\medskip

\subsection{Presentation by generators and relations}
We follow the same strategy as the one used in
Section~\ref{subsec:presentation_dias_gamma} to establish a presentation
by generators and relations of $\Trias_\gamma$ and prove that it is a
Koszul operad. As announced above, we omit complete proofs here but
we describe the analogue for $\Trias_\gamma$ of the maps $\Mot_\gamma$
and $\Equerre_\gamma$ defined in
Section~\ref{subsec:presentation_dias_gamma} for the operad $\Dias_\gamma$.
\medskip

For any integer $\gamma \geq 0$, let $\GenTrias := \GenTrias(2)$ be the
graded set where
\begin{equation}
    \GenTrias(2) := \{\GDias_a, \MTrias, \DDias_a : a \in [\gamma] \}.
\end{equation}
\medskip

Let $\Tfr$ be a syntax tree of $\OpLibre\left(\GenTrias\right)$ and $x$ be
a leaf of $\Tfr$. We say that an integer $a \in \{0\} \cup [\gamma]$ is
{\em eligible} for $x$ if $a = 0$ or there is an ancestor $y$ of $x$
labeled by $\GDias_a$ (resp. $\DDias_a$) and $x$ is in the right (resp.
left) subtree of $y$. The {\em image} of $x$ is its greatest eligible
integer. Moreover, let
\begin{equation}
    \MotT_\gamma : \OpLibre\left(\GenTrias\right)(n) \to \Trias_\gamma(n),
    \qquad n \geq 1,
\end{equation}
the map where $\MotT_\gamma(\Tfr)$ is the word obtained by considering,
from left to right, the images of the leaves of $\Tfr$
(see Figure~\ref{fig:exemple_mot_gamma_trias_gamma}).
\begin{figure}[ht]
    \centering
     \begin{tikzpicture}[xscale=.28,yscale=.18]
        \node(0)at(0.00,-15.33){};
        \node(10)at(10.00,-11.50){};
        \node(12)at(12.00,-15.33){};
        \node(14)at(14.00,-15.33){};
        \node(16)at(16.00,-19.17){};
        \node(18)at(18.00,-19.17){};
        \node(2)at(2.00,-15.33){};
        \node(20)at(20.00,-15.33){};
        \node(22)at(22.00,-11.50){};
        \node(4)at(4.00,-11.50){};
        \node(6)at(6.00,-11.50){};
        \node(8)at(8.00,-11.50){};
        \node(1)at(1.00,-11.50){\begin{math}\GDias_1\end{math}};
        \node(3)at(3.00,-7.67){\begin{math}\DDias_3\end{math}};
        \node(5)at(5.00,-3.83){\begin{math}\GDias_4\end{math}};
        \node(7)at(7.00,-7.67){\begin{math}\MTrias\end{math}};
        \node(9)at(9.00,0.00){\begin{math}\DDias_2\end{math}};
        \node(11)at(11.00,-7.67){\begin{math}\GDias_3\end{math}};
        \node(13)at(13.00,-11.50){\begin{math}\DDias_4\end{math}};
        \node(15)at(15.00,-3.83){\begin{math}\MTrias\end{math}};
        \node(17)at(17.00,-15.33){\begin{math}\DDias_3\end{math}};
        \node(19)at(19.00,-11.50){\begin{math}\DDias_2\end{math}};
        \node(21)at(21.00,-7.67){\begin{math}\GDias_1\end{math}};
        \draw(0)--(1);\draw(1)--(3);\draw(10)--(11);\draw(11)--(15);
        \draw(12)--(13);\draw(13)--(11);\draw(14)--(13);\draw(15)--(9);
        \draw(16)--(17);\draw(17)--(19);\draw(18)--(17);\draw(19)--(21);
        \draw(2)--(1);\draw(20)--(19);\draw(21)--(15);\draw(22)--(21);
        \draw(3)--(5);\draw(4)--(3);\draw(5)--(9);\draw(6)--(7);
        \draw(7)--(5);\draw(8)--(7);
        \node(r)at(9.00,3){};
        \draw(r)--(9);
        \node[below of=0,node distance=3mm]
            {\small \begin{math}\textcolor{Bleu}{3}\end{math}};
        \node[below of=2,node distance=3mm]
            {\small \begin{math}\textcolor{Bleu}{3}\end{math}};
        \node[below of=4,node distance=3mm]
            {\small \begin{math}\textcolor{Bleu}{2}\end{math}};
        \node[below of=6,node distance=3mm]
            {\small \begin{math}\textcolor{Bleu}{4}\end{math}};
        \node[below of=8,node distance=3mm]
            {\small \begin{math}\textcolor{Bleu}{4}\end{math}};
        \node[below of=10,node distance=3mm]
            {\small \begin{math}\textcolor{Bleu}{0}\end{math}};
        \node[below of=12,node distance=3mm]
            {\small \begin{math}\textcolor{Bleu}{4}\end{math}};
        \node[below of=14,node distance=3mm]
            {\small \begin{math}\textcolor{Bleu}{3}\end{math}};
        \node[below of=16,node distance=3mm]
            {\small \begin{math}\textcolor{Bleu}{3}\end{math}};
        \node[below of=18,node distance=3mm]
            {\small \begin{math}\textcolor{Bleu}{2}\end{math}};
        \node[below of=20,node distance=3mm]
            {\small \begin{math}\textcolor{Bleu}{0}\end{math}};
        \node[below of=22,node distance=3mm]
            {\small \begin{math}\textcolor{Bleu}{1}\end{math}};
    \end{tikzpicture}
    \caption{A syntax tree $\Tfr$ of $\OpLibre\left(\GenTrias\right)$
    where images of its leaves are shown. This tree satisfies
    $\MotT_\gamma(\Tfr) = \textcolor{Bleu}{332440433201}$.}
    \label{fig:exemple_mot_gamma_trias_gamma}
\end{figure}
Observe that $\MotT_\gamma$ is an extension of $\Mot_\gamma$
(see~\eqref{equ:application_mot_gamma}).
\medskip

Consider now the map
\begin{equation}
    \EquerreT_\gamma :
    \Trias_\gamma(n) \to \OpLibre\left(\GenTrias\right)(n),
    \qquad n \geq 1,
\end{equation}
defined for any word $x$ of $\Trias_\gamma$ by
\begin{equation} \label{equ:definition_equerre_trias_gamma}
    \begin{split}\EquerreT_\gamma(x)\end{split} :=
    \begin{split}
    \begin{tikzpicture}[xscale=.5,yscale=.45]
        \node(0)at(0.00,-5.40){};
        \node(2)at(3.00,-7.20){};
        \node(4)at(5.00,-7.20){};
        \node(6)at(6.00,-3.60){};
        \node(8)at(9.00,-1.80){};
        \node(1)at(3.00,-4){\begin{math}\Equerre_\gamma(u)\end{math}};
        \node(5)at(5.00,-2){\begin{math}\MTrias\end{math}};
        \node(7)at(12.00,2.00){\begin{math}\MTrias\end{math}};
        \node(9)at(7.5,-4){\begin{math}\GDias_{v^{(1)}_{k^{(1)}}}\end{math}};
        \node(10)at(5,-6.5){\begin{math}\GDias_{v^{(1)}_1}\end{math}};
        \node(11)at(14.5,0){\begin{math}\GDias_{v^{(\ell)}_{k^{(\ell)}}}\end{math}};
        \node(12)at(12,-2.5){\begin{math}\GDias_{v^{(\ell)}_1}\end{math}};
        \node(r)at(12,3.5){};
        \draw(1)--(5);
        \draw[densely dashed](5)--(7);
        \draw(9)--(5);
        \draw(11)--(7);
        \draw(7)--(r);
        \draw[densely dashed](9)--(10);
        \draw[densely dashed](11)--(12);
        \node(f1)at(4,-8){}; \draw(f1)--(10);
        \node(f2)at(6,-8){}; \draw(f2)--(10);
        \node(f3)at(8.5,-5.5){}; \draw(f3)--(9);
        \node(f4)at(11,-4){}; \draw(f4)--(12);
        \node(f5)at(13,-4){}; \draw(f5)--(12);
        \node(f6)at(15.5,-1.5){}; \draw(f6)--(11);
    \end{tikzpicture}
    \end{split}\,,
\end{equation}
where $x$ decomposes, by Proposition~\ref{prop:elements_trias_gamma},
uniquely in $x = u0v^{(1)}\dots 0v^{(\ell)}$ where $u$ is a word of
$\Dias_\gamma$ and for all $i \in [\ell]$, the $v^{(i)}$ are words on
the alphabet $[\gamma]$. The length $|v^{(i)}|$ of any $v_i$ is denoted
by $k^{(i)}$. The dashed edges denote left comb trees wherein internal
nodes are labeled as specified. Observe that $\EquerreT_\gamma$ is an
extension of $\Equerre_\gamma$ (see~\eqref{equ:application_equerre_gamma}).
We shall call any syntax tree of the
form~\eqref{equ:definition_equerre_trias_gamma} an
{\em extended hook syntax tree}.
\medskip

\begin{Theoreme} \label{thm:presentation_trias_gamma}
    For any integer $\gamma \geq 0$, the operad $\Trias_\gamma$ admits
    the following presentation. It is generated by $\GenTrias$ and its
    space of relations $\RelTrias$ is the space induced by the
    equivalence relation $\Rel_\gamma$ satisfying
    \begin{subequations}
    \begin{equation}\label{equ:relation_presentation_trias_gamma_1}
        \MTrias \circ_1 \MTrias
        \enspace \Rel_\gamma \enspace
        \MTrias \circ_2 \MTrias,
    \end{equation}
    \begin{equation}\label{equ:relation_presentation_trias_gamma_2}
        \GDias_a \circ_1 \MTrias
        \enspace \Rel_\gamma \enspace
        \MTrias \circ_2 \GDias_a,
        \qquad a \in [\gamma],
    \end{equation}
    \begin{equation}\label{equ:relation_presentation_trias_gamma_3}
        \MTrias \circ_1 \DDias_a
        \enspace \Rel_\gamma \enspace
        \DDias_a \circ_2 \MTrias,
        \qquad a \in [\gamma],
    \end{equation}
    \begin{equation}\label{equ:relation_presentation_trias_gamma_4}
        \MTrias \circ_1 \GDias_a
        \enspace \Rel_\gamma \enspace
        \MTrias \circ_2 \DDias_a,
        \qquad a \in [\gamma],
    \end{equation}
    \begin{equation}\label{equ:relation_presentation_trias_gamma_5}
        \GDias_a \circ_1 \DDias_{a'}
        \enspace \Rel_\gamma \enspace
        \DDias_{a'} \circ_2 \GDias_a,
        \qquad a, a' \in [\gamma],
    \end{equation}
    \begin{equation}\label{equ:relation_presentation_trias_gamma_6}
        \GDias_a \circ_1 \GDias_b
        \enspace \Rel_\gamma \enspace
        \GDias_a \circ_2 \DDias_b,
        \qquad a < b \in [\gamma],
    \end{equation}
    \begin{equation}\label{equ:relation_presentation_trias_gamma_7}
        \DDias_a \circ_1 \GDias_b
        \enspace \Rel_\gamma \enspace
        \DDias_a \circ_2 \DDias_b,
        \qquad a < b \in [\gamma],
    \end{equation}
    \begin{equation}\label{equ:relation_presentation_trias_gamma_8}
        \GDias_b \circ_1 \GDias_a
        \enspace \Rel_\gamma \enspace
        \GDias_a \circ_2 \GDias_b,
        \qquad a < b \in [\gamma],
    \end{equation}
    \begin{equation}\label{equ:relation_presentation_trias_gamma_9}
        \DDias_a \circ_1 \DDias_b
        \enspace \Rel_\gamma \enspace
        \DDias_b \circ_2 \DDias_a,
        \qquad a < b \in [\gamma],
    \end{equation}
    \begin{equation}\label{equ:relation_presentation_trias_gamma_10}
        \GDias_d \circ_1 \GDias_d
        \enspace \Rel_\gamma \enspace
        \GDias_d \circ_2 \MTrias
        \enspace \Rel_\gamma \enspace
        \GDias_d \circ_2 \GDias_c
        \enspace \Rel_\gamma \enspace
        \GDias_d \circ_2 \DDias_c,
        \qquad c \leq d \in [\gamma],
    \end{equation}
    \begin{equation}\label{equ:relation_presentation_trias_gamma_11}
        \DDias_d \circ_1 \GDias_c
        \enspace \Rel_\gamma \enspace
        \DDias_d \circ_1 \DDias_c
        \enspace \Rel_\gamma \enspace
        \DDias_d \circ_1 \MTrias
        \enspace \Rel_\gamma \enspace
        \DDias_d \circ_2 \DDias_d,
        \qquad c \leq d \in [\gamma].
    \end{equation}
    \end{subequations}
\end{Theoreme}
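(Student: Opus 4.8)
The plan is to transcribe, mutatis mutandis, the strategy used for $\Dias_\gamma$ in Section~\ref{subsec:presentation_dias_gamma}, with $\Mot_\gamma$ and $\Equerre_\gamma$ replaced by their extensions $\MotT_\gamma$ and $\EquerreT_\gamma$ introduced above. First I would establish that $\MotT_\gamma$ is a well-defined surjective operad morphism from $\OpLibre\left(\GenTrias\right)$ onto $\Trias_\gamma$ admitting $\EquerreT_\gamma$ as a section, mirroring Lemmas~\ref{lem:mot_gamma_morphisme} and~\ref{lem:mot_gamma_surjection}. For well-definedness, note that an internal node labeled by $\MTrias$ never makes any nonzero integer eligible for a leaf lying below it; hence the leaves of a syntax tree $\Tfr$ whose image is $0$ are exactly those lying, for each of their $\GDias$-ancestors (resp. $\DDias$-ancestors), in its left (resp. right) subtree. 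There is at least one such leaf and all the other leaves receive images in $[\gamma]$, so $\MotT_\gamma(\Tfr) \in \Trias_\gamma(n)$ by Proposition~\ref{prop:elements_trias_gamma}. That $\MotT_\gamma$ is an operad morphism follows exactly as in Lemma~\ref{lem:mot_gamma_morphisme}: replacing in $\Tfr$ each label $\GDias_a$, $\DDias_a$, $\MTrias$ by the word $0a$, $a0$, $00$ respectively and computing $\Eval_{\Trias_\gamma}$ of the result returns $\MotT_\gamma(\Tfr)$. Finally, the unique decomposition $x = u0v^{(1)} \dots 0v^{(\ell)}$ of Proposition~\ref{prop:elements_trias_gamma} makes it immediate that $\EquerreT_\gamma(x)$ lies in the fiber of $x$, so $\MotT_\gamma$ is onto.

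Next I would introduce the quadratic rewrite rule $\Recr_\gamma$ on $\OpLibre\left(\GenTrias\right)$ obtained by keeping the nine rules~\eqref{equ:reecriture_dias_gamma_1}---\eqref{equ:reecriture_dias_gamma_9} and adding rules orienting the relations involving $\MTrias$ towards extended hook syntax trees: the rule $\MTrias \circ_2 \MTrias \Recr \MTrias \circ_1 \MTrias$ coming from~\eqref{equ:relation_presentation_trias_gamma_1}, the rules orienting~\eqref{equ:relation_presentation_trias_gamma_2}---\eqref{equ:relation_presentation_trias_gamma_4} which push the $\MTrias$-labeled nodes above the $\GDias$- and $\DDias$-labeled ones into a left-comb spine, and the rules $\GDias_d \circ_2 \MTrias \Recr \GDias_d \circ_1 \GDias_d$ and $\DDias_d \circ_1 \MTrias \Recr \DDias_d \circ_2 \DDias_d$ for $d \in [\gamma]$ coming from~\eqref{equ:relation_presentation_trias_gamma_10} and~\eqref{equ:relation_presentation_trias_gamma_11}; let $\Congr_\gamma$ be the induced operad congruence. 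Exactly as in Lemma~\ref{lem:mot_gamma_stable_classes_equivalence}, each of these relations preserves, leaf by leaf, the eligible integers of the three leaves of the degree-two syntax trees it involves---this being obvious for the relations featuring $\MTrias$ since that label affects no eligibility---so $\Tfr_1 \Congr_\gamma \Tfr_2$ implies $\MotT_\gamma(\Tfr_1) = \MotT_\gamma(\Tfr_2)$ and $\MotT_\gamma$ induces an operad morphism $\bar\MotT_\gamma : \OpLibre\left(\GenTrias\right)/_{\Congr_\gamma} \to \Trias_\gamma$.

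The core step is then the analogue of Lemma~\ref{lem:reecriture_dias_gamma}: every syntax tree of $\OpLibre\left(\GenTrias\right)$ is rewritable by $\Recr_\gamma$ into an extended hook syntax tree, which must be $\EquerreT_\gamma(\MotT_\gamma(\Tfr))$ since $\MotT_\gamma$ is constant on $\Congr_\gamma$-classes and distinct extended hook syntax trees have distinct images under $\MotT_\gamma$. I would prove this by induction on the arity: first use the $\MTrias$-rules to float all $\MTrias$-labeled nodes into a left-comb spine whose bottom-left subtree and successive right subtrees carry only $\GDias$- and $\DDias$-labels, the rules $\GDias_d \circ_2 \MTrias \Recr \GDias_d \circ_1 \GDias_d$ and $\DDias_d \circ_1 \MTrias \Recr \DDias_d \circ_2 \DDias_d$ absorbing the configurations where a $\GDias$- or $\DDias$-node ends up directly above an $\MTrias$-node; then apply Lemma~\ref{lem:reecriture_dias_gamma} to each of those $\Dias_\gamma$-type subtrees to turn it into an ordinary hook syntax tree. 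From here, as in Lemma~\ref{lem:mot_gamma_quotient_bijection}, two trees with the same $\MotT_\gamma$-value both reduce to the same extended hook syntax tree, hence are $\Congr_\gamma$-equivalent, so $\bar\MotT_\gamma$ is injective, hence an operad isomorphism $\OpLibre\left(\GenTrias\right)/_{\Congr_\gamma} \cong \Trias_\gamma$. Since $\RelTrias$ is, by construction, the space induced by $\Congr_\gamma$, which coincides with the space induced by the equivalence relation $\Rel_\gamma$ of the statement, $\Trias_\gamma$ admits the announced presentation. As a byproduct, $\Recr_\gamma$ is a convergent quadratic rewrite rule whose normal forms are the extended hook syntax trees, so, as in Theorem~\ref{thm:koszulite_dias_gamma}, $\Trias_\gamma$ is Koszul and the extended hook syntax trees of $\OpLibre\left(\GenTrias\right)$ form a Poincaré-Birkhoff-Witt basis of it.

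The main obstacle I anticipate is the inductive step of the analogue of Lemma~\ref{lem:reecriture_dias_gamma}, that is, showing that the new $\MTrias$-rules can always be interleaved with the nine $\Dias_\gamma$-rules so as to reach an extended hook syntax tree; concretely this requires examining the critical pairs between the $\MTrias$-rules and the $\Dias_\gamma$-rules---in particular those arising when an $\MTrias$-node must be pulled above a $\GDias$- or $\DDias$-node that is itself being hook-ified---and checking that none of these overlaps obstructs confluence. Everything else is a routine adaptation of the corresponding arguments for $\Dias_\gamma$.
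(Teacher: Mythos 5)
Your proposal is correct and follows exactly the route the paper intends: the paper omits this proof, stating only that one follows the strategy of Section~\ref{subsec:presentation_dias_gamma} with $\Mot_\gamma$, $\Equerre_\gamma$, and hook syntax trees replaced by $\MotT_\gamma$, $\EquerreT_\gamma$, and extended hook syntax trees, which is precisely the chain of lemmas (morphism, surjectivity via the section $\EquerreT_\gamma$, compatibility of the oriented quadratic rewrite rule with $\MotT_\gamma$, rewritability of every syntax tree into an extended hook tree, hence bijectivity of $\bar\MotT_\gamma$) that you reconstruct, with the Koszulity of Theorem~\ref{thm:koszulite_trias_gamma} obtained as the same byproduct. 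Your identification of the rewriting-into-extended-hook lemma (the analogue of Lemma~\ref{lem:reecriture_dias_gamma}, where the $\MTrias$-rules must be interleaved with the nine $\Dias_\gamma$-rules, e.g.\ via the relation $\MTrias \circ_2 \DDias_a \Rel_\gamma \MTrias \circ_1 \GDias_a$ to empty the $\DDias$-parts of the combs hanging to the right of the $\MTrias$-spine) as the only nonroutine step is accurate.
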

\medskip

Observe that, by Theorem~\ref{thm:presentation_trias_gamma}, $\Trias_1$
and the triassociative operad~\cite{LR04} admit the same presentation.
Then, for all integers $\gamma \geq 0$, the operads $\Trias_\gamma$ are
generalizations of the triassociative operad.
\medskip

\begin{Theoreme} \label{thm:koszulite_trias_gamma}
    For any integer $\gamma \geq 0$, $\Trias_\gamma$ is a Koszul operad.
    Moreover, the set of extended hook syntax trees of
    $\OpLibre\left(\GenTrias\right)$ forms a Poincaré-Birkhoff-Witt
    basis of $\Trias_\gamma$.
\end{Theoreme}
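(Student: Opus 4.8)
The plan is to follow closely the proof of Theorem~\ref{thm:koszulite_dias_gamma}, replacing hook syntax trees by extended hook syntax trees. First I would introduce the quadratic rewrite rule $\Recr_\gamma$ on $\OpLibre\left(\GenTrias\right)$ obtained by orienting the relations~\eqref{equ:relation_presentation_trias_gamma_1}---\eqref{equ:relation_presentation_trias_gamma_11} of Theorem~\ref{thm:presentation_trias_gamma} from left to right, and let $\Congr_\gamma$ be the operad congruence it induces. By construction, the space of relations $\RelTrias$ of Theorem~\ref{thm:presentation_trias_gamma} is exactly the space induced by $\Congr_\gamma$, so $\Trias_\gamma$ is isomorphic to $\OpLibre\left(\GenTrias\right)/_{\Congr_\gamma}$ and this presentation is binary and quadratic. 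By the Koszulity criterion recalled in Section~\ref{subsubsec:Koszulite}, it then suffices to prove that $\Recr_\gamma$ is convergent and that its set of normal forms is the set of extended hook syntax trees; the Poincaré-Birkhoff-Witt assertion follows from the same criterion.

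Next I would establish the analogue of Lemma~\ref{lem:mot_gamma_stable_classes_equivalence}: each one-step $\Recr_\gamma$-rewriting preserves, leaf by leaf, the set of eligible integers, so $\MotT_\gamma$ is constant on $\Congr_\gamma$-classes and factors through a surjection $\bar\MotT_\gamma$ onto $\Trias_\gamma$ (for the new rules~\eqref{equ:relation_presentation_trias_gamma_1}---\eqref{equ:relation_presentation_trias_gamma_4} involving $\MTrias$ this is transparent, since an $\MTrias$-labeled node contributes no eligible integer to any leaf above it). The core of the argument is then the analogue of Lemma~\ref{lem:reecriture_dias_gamma}: every syntax tree of $\OpLibre\left(\GenTrias\right)$ is $\Recr_\gamma$-rewritable into an extended hook syntax tree. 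I would prove this by induction on the arity, reducing first the two subtrees of the root to extended hook syntax trees and then carrying out a case analysis on the root label and on the top parts of the two reduced subtrees: rules~\eqref{equ:relation_presentation_trias_gamma_5}---\eqref{equ:relation_presentation_trias_gamma_9} together with~\eqref{equ:relation_presentation_trias_gamma_10} and~\eqref{equ:relation_presentation_trias_gamma_11} push all $\DDias$-labeled nodes into a single right comb and all $\GDias$-labeled nodes into left combs exactly as in the $\Dias_\gamma$ case, while the new rules~\eqref{equ:relation_presentation_trias_gamma_1}---\eqref{equ:relation_presentation_trias_gamma_4} move every $\MTrias$-labeled node to its correct position relative to the $0$-carrying spine, so that the final tree has the shape~\eqref{equ:definition_equerre_trias_gamma}. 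Since no extended hook syntax tree contains a factor equal to a left-hand side of $\Recr_\gamma$, extended hook syntax trees are exactly the normal forms; and since distinct extended hook syntax trees have distinct images under $\MotT_\gamma$ (immediate from~\eqref{equ:definition_equerre_trias_gamma}, the definition of $\MotT_\gamma$, and Proposition~\ref{prop:elements_trias_gamma}), the reduction target of a tree $\Tfr$ is forced to be $\EquerreT_\gamma(\MotT_\gamma(\Tfr))$. This yields at once termination-to-normal-forms, confluence, injectivity of $\bar\MotT_\gamma$, and hence the theorem.

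The main obstacle is the reduction lemma, and more precisely turning it into a genuine \emph{termination} statement rather than a mere reachability one: I would exhibit an explicit weight function on syntax trees of $\OpLibre\left(\GenTrias\right)$ strictly decreasing under every rule~\eqref{equ:relation_presentation_trias_gamma_1}---\eqref{equ:relation_presentation_trias_gamma_11}, built by combining a measure of how far the $\DDias$-nodes are from forming a right comb, how far the $\GDias$-nodes are from forming left combs, and how far the $\MTrias$-nodes are from the spine. The case bookkeeping in the induction is heavier than for $\Dias_\gamma$ because of the extra generator $\MTrias$ and the longer right-hand spine $0v^{(1)}\dots 0v^{(\ell)}$ of $\EquerreT_\gamma$, but once termination and the reduction lemma are secured, local confluence is automatic: $\MotT_\gamma$ separates the normal forms, so no critical pair can obstruct confluence. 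The remainder is then a formal copy of the proofs of Lemma~\ref{lem:mot_gamma_quotient_bijection} and Theorem~\ref{thm:koszulite_dias_gamma}.
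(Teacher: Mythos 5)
Your overall strategy is exactly the intended one: the paper omits the proof of this theorem and refers to Section~2, and your plan (a quadratic rewrite rule on $\OpLibre\left(\GenTrias\right)$ presenting $\Trias_\gamma$, a reduction lemma sending every syntax tree to an extended hook syntax tree, invariance of $\MotT_\gamma$ under rewriting, injectivity of $\MotT_\gamma$ on extended hook trees, and then the Hoffbeck--Dotsenko--Khoroshkin criterion) is the faithful analogue of Lemmas~\ref{lem:mot_gamma_stable_classes_equivalence}--\ref{lem:mot_gamma_quotient_bijection} and Theorem~\ref{thm:koszulite_dias_gamma}; your insistence on an explicit decreasing weight for termination is even more careful than the argument given for $\Dias_\gamma$.

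There is, however, one concrete flaw in the setup: the rewrite rule cannot be obtained by ``orienting the relations of Theorem~\ref{thm:presentation_trias_gamma} from left to right.'' Already in the diassociative case the rules \eqref{equ:reecriture_dias_gamma_1}--\eqref{equ:reecriture_dias_gamma_9} are \emph{not} a uniform left-to-right orientation of \eqref{equ:relation_dias_gamma_1}--\eqref{equ:relation_dias_gamma_7}: for instance \eqref{equ:reecriture_dias_gamma_1} rewrites $\DDias_{a'} \circ_2 \GDias_a$ into $\GDias_a \circ_1 \DDias_{a'}$, i.e.\ from the right member of \eqref{equ:relation_dias_gamma_1} to its left member. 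With your literal orientation, \eqref{equ:relation_presentation_trias_gamma_1} would give $\MTrias \circ_1 \MTrias \Recr \MTrias \circ_2 \MTrias$ and \eqref{equ:relation_presentation_trias_gamma_5} would give $\GDias_a \circ_1 \DDias_{a'} \Recr \DDias_{a'} \circ_2 \GDias_a$; but $\MTrias \circ_1 \MTrias$ is precisely the spine pattern of every extended hook syntax tree with at least two occurrences of $0$ outside $u$, and $\GDias_a \circ_1 \DDias_{a'}$ is the junction pattern inside $\Equerre_\gamma(u)$. So under your orientation the extended hook syntax trees are \emph{not} normal forms (and conversely trees containing $\MTrias \circ_2 \MTrias$ would be), which contradicts your later claim that no extended hook tree contains a left-hand side, and the whole identification of normal forms, hence confluence and the PBW statement, collapses. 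The fix is to orient every relation \emph{towards} the extended hook pattern, exactly as in \eqref{equ:reecriture_dias_gamma_1}--\eqref{equ:reecriture_dias_gamma_9}, supplemented by rules such as $\MTrias \circ_2 \MTrias \Recr \MTrias \circ_1 \MTrias$, $\GDias_a \circ_1 \MTrias \Recr \MTrias \circ_2 \GDias_a$, $\DDias_a \circ_2 \MTrias \Recr \MTrias \circ_1 \DDias_a$, $\MTrias \circ_2 \DDias_a \Recr \MTrias \circ_1 \GDias_a$, and by orienting the chains \eqref{equ:relation_presentation_trias_gamma_10} and \eqref{equ:relation_presentation_trias_gamma_11} towards $\GDias_d \circ_1 \GDias_d$ and $\DDias_d \circ_2 \DDias_d$ respectively. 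With that corrected system (whose normal forms are then indeed the extended hook syntax trees), the remainder of your argument goes through as you describe.
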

\medskip

\bibliographystyle{alpha}
\bibliography{Bibliographie}

\end{document}